\documentclass[11pt,oneside,a4paper]{article}
\usepackage[top=1 in, bottom=1 in, left=0.85 in, right=0.85 in]{geometry}

\usepackage{amsmath} 
\usepackage{amsthm} 
\usepackage{amssymb}
\usepackage{tabularx}
\usepackage{graphicx} 
\usepackage{subcaption}
\usepackage{algorithm}
\usepackage{graphicx}
\usepackage{epstopdf}
\usepackage{enumerate}
\usepackage{stfloats}
\usepackage{xcolor}
\usepackage{thmtools}
\usepackage{thm-restate}
\usepackage{multirow}
\usepackage[hidelinks]{hyperref}

\newtheorem{theorem}{Theorem}
\newtheorem{lemma}{Lemma}

\newtheorem{remark}{Remark}

\newcommand{\delete}[1]{}

\title{Large-System Insensitivity of Zero-Waiting Load Balancing Algorithms}
\author{Xin Liu \\ShanghaiTech University \\ liuxin7@shanghaitech.edu.cn
\and Kang Gong \\University of Michigan, Ann Arbor \\ kanggong@umich.edu
\and Lei Ying \\University of Michigan, Ann Arbor \\ leiying@umich.edu}
\date{}

\begin{document}

\maketitle

\begin{abstract}
This paper studies the sensitivity (or insensitivity) of a class of load balancing algorithms that achieve asymptotic zero-waiting in the sub-Halfin-Whitt regime \cite{LiuYin_20}, named LB-zero. Most existing results on zero-waiting load balancing algorithms assume the service time distribution is exponential.  This paper establishes the {\em large-system insensitivity} of LB-zero for jobs whose service time follows a Coxian distribution with a finite number of phases. This result suggests that LB-zero achieves asymptotic zero-waiting for a large class of service time distributions,
which is confirmed in our simulations. To prove this result, this paper develops a new technique, called ``Iterative State-Space Peeling'' (or ISSP for short). ISSP first identifies an iterative relation between the upper and lower bounds on the queue states and then proves that the system lives near the  fixed point of the iterative bounds with a high probability. Based on ISSP, the steady-state distribution of the system is further analyzed by applying Stein's method in the neighborhood of the fixed point. ISSP, like state-space collapse in heavy-traffic analysis, is a general approach that may be used to study other complex stochastic systems. 
\end{abstract}

\section{Introduction}
Zero-waiting load balancing refers to a load balancing algorithm under which a job is routed to an idle server to be processed immediately upon its arrival. The problem has become increasingly important as the amount of modern machine learning (ML) and artificial intelligence (AI) applications running on large-scale data centers explodes. While increasing the number of servers and the processing speed of each server is a critical step to meet the increasing demand, the design of load balancing algorithms that can efficiently utilize available resources to minimize or even eliminate the waiting time of incoming jobs is equally important,  especially when a minor increase of latency (e.g. 100 milliseconds) can lead to a significant drop in a cloud-computing provider's revenue (7\% drop  in sales according to a recent Akamai report \cite{Aka_17}).

Significant processes have been made over the past few years on understanding achieving asymptotic zero-waiting (as the system size approaches infinity) in a large-scale data center with distributed queues, including the classic supermarket model \cite{EscGam_18,Bra_20,Sto_15,GupWal_19,BanMuk_19,BanMuk_20,MukBorvan_18,LiuYin_20,LiuYin_21,LiuYin_18_Infocom,LiuGonYin_21,ZhaBanMuk_21,Bra_22}, models with data locality \cite{WenZhoSri_20, DanDeb_21} and models where each job consists of parallel tasks \cite{WenWan_20,WanXieHar_21,HonWan_21}, etc. 

However, almost all these results assume exponential service time distributions. While each of these results \cite{EscGam_18,Bra_20,Sto_15,GupWal_19,DebankSemLee_18,MukBorvan_18,LiuYin_20,LiuYin_21,LiuYin_18_Infocom,LiuGonYin_21,WenZhoSri_20, DanDeb_21,WenWan_20,WanXieHar_21,HonWan_21,ZhaBanMuk_21,Bra_22} provided important insights of achieving zero-waiting in a practical system, theoretically, it is not clear whether these principles hold for general service times. This is a very important question to answer because it is well-known that service time distributions in real-world systems are not exponential. Understanding a queueing system's performance with general service time remains one of the most important and intensively studied problems in stochastic networks. A concept that excited many theorists in the area is ``insensitivity'' \cite{Dav_81}. A queueing system is called insensitive if the steady-state distribution of queue lengths  is invariant to the service time distribution. Therefore, any conclusion drawn from exponential service time distributions can be applied to general service time distributions. A result that is insensitive is robust and is expected to be widely applicable in practical systems. Unfortunately, insensitivity results are rare and often hold only under some special queueing disciplines such as processor sharing (PS) \cite{Dav_81, BonJonPro_04, KieVan_21}. One of the reasons is that insensitivity, while appealing, is a very strong notion of ``robustness''. It requires the steady-state distribution under a general service time distribution to be {\em exactly the same} as that under the exponential distribution.  Some recent studies started to relax it to weaker notions such as insensitivity in the heavy-traffic regime \cite{WanMagSri_21} or the large-system regime \cite{BraLuPra_12,Sto_15}, i.e. insensitivity in the limiting regimes. In the light of these recent developments, this paper addresses the following important question: 

\centerline{\em Are the zero-waiting algorithms insensitive and if so, in which notion of insensitivity? } 

\subsection{Main Contributions}
This paper provides some positive answers to the question above. First, it is well known that most of zero-waiting algorithms, such as join-the-shortest-queue (JSQ) \cite{Win_77} and join-the-idle-queue (JIQ) \cite{LuXieKli_11}, are not insensitive (according to its original definition). However, we prove that in the sub-Halfin-Whitt regime, LB-zero identified in \cite{LiuYin_20} in fact achieves asymptotic zero-waiting for jobs whose service time follows a Coxian distribution with a finite number of phases. This result establishes the {\em large-system insensitivity of LB-zero for Coxian service time distributions with a finite number of phases.} Since the Coxian family is dense in the class of positive-valued distributions, our result strongly suggests 
a load balancing algorithm in the LB-zero family will be able to minimize unnecessary waiting in large-scale data centers for a large class of job size or service times  distributions. Our simulations further confirm it.

To prove this result, this paper develops a new technique, called ``iterative state-space peeling'' (or ISSP for short). ISSP first identifies an iterative relation between upper and lower bounds on the queue states. 
Then by iteratively ``peeling off'' the low-probability states, it proves that the system ``lives'' near  the fixed point of the iterative bounds with a high probability. Based on ISSP, the steady-state distribution of the system can be further analyzed by using Stein's method in a small neighborhood of the fixed point. ISSP, like the state-space collapse in the heavy-traffic analysis, is a general technique that may be used to study other complex stochastic systems, e.g. large-system insensitivity of load balancing algorithms for other models like those studied in \cite{DebankSemLee_18,WenWan_20,WenZhoSri_20,WanXieHar_21,WenSri_22}. 

We remark that this paper does not establish the large-system insensitivity for an {\em arbitrary} service time distribution, for which we need to show that our results continue to hold for a large but finite $N$ when the number of phases of the Coxian distribution goes to  infinity. This requires an interchange limits arguments or a continuity argument and is an interesting open problem. It is also worth mentioning that a Coxian representation of a probability distribution is non-unique. The choice of the Coxian representation is out the scope of this paper.

\subsection{Related Work}
Steady-state analysis of distributed queueing systems has been an active research topic since the seminal work on power-of-two-choices \cite{Mit_96,VveDobKar_96}. 
The most popular approach to study a large-scale distributed queueing system is the mean-field approach where the system is approximated using a deterministic dynamical system (a set of ordinary differential equations), called a mean-field model. In the large-system limit (as the number of servers approaches infinity), the steady-state of the stochastic system can often be shown to converge to the equilibrium point of the mean-field model using the interchange of limits (e.g. \cite{VveDobKar_96,XieDonLu_15,YinSriKan_15}) or Stein's method (e.g. \cite{Yin_16,Gas_17}).

While most studies on this topic assume exponential service time distributions for tractability, the approach has been used to study non-exponential service time distributions theoretically or numerically (see. e.g. \cite{Mit_96,BraLuPra_12,Sto_15,AghLiRam_17, VasMukMaz_17, HelVan_18,Van_19,KieVan_21}).
For example, when non-exponential service time distributions has a decreasing hazard rate (DHR), the system often exhibits  a monotonicity property such that the system starting from  the empty state is dominated by the system starting from any other state. Leveraging this monotonicity, \cite{BraLuPra_12} proved the convergence of power-of-$d$-choices and  \cite{Sto_15} proved the convergence of JIQ to the corresponding mean-field limit, respectively. 
Recently, \cite{Van_19} studied load balancing policies under the hyper-exponential service time distribution in the light traffic regime (or in a critical traffic regime). By transforming the hyper-exponential distribution to a Coxian distribution with DHR, the monontoncity property holds in a partial order and the global stability of the mean-field model was established. \cite{KieVan_21} studied Po$d$ with PS servers for a hyper-exponential distribution of order $2$ in the light traffic regime. They also established the global stability result in a spirit similar to \cite{Van_19}. We note \cite{KieVan_21} considered  Po$d$ where $d$ is a constant independent of the number of servers so the system has a non-vanishing delay in the large-system limit. For the Po$d$ algorithm in the LB-zero family, $d$ is a function of $N$ and the algorithm achieves asymptotic zero waiting in the heavy-traffic regime without the DHR assumption.

Without DHR, the results are very limited. \cite{BraLuPra_12} proved the convergence of join the least loaded of $d$ queues (LL(d)) for general service time distributions and that of Po$d$ when the load of the system is small (less than $1/4$).  \cite{FosSto_17} proved the asymptotic optimality of JIQ under general service time distributions when the normalized load is less than $0.5.$ Since JIQ is an LB-zero policy, our result confirms the conjecture made in \cite{FosSto_17} that JIQ is asymptotically optimal for any load less than one, not just less than $0.5.$  Another significant result is \cite{BonJonPro_04}, which identifies a set of policies that are insensitive in many-server load-balancing systems and are optimal in the class of insensitive load balancing algorithms. The asymptotic blocking probability of this class of insensitive algorithms in a finite buffer system was later studied in  \cite{JonBal_16}. Many LB-zero such as JSQ and Po$d$ algorithms are sensitive, so the results in \cite{BonJonPro_04,JonBal_16} do not apply. We also note that the waiting probability in our paper includes both blocking and being queued in the buffer, so our result implies asymptotic zero blocking of LB-zero in the sub-Halfin-Whitt regime.

\cite{LiuGonYin_21} is the work most related to this paper, which considers the Coxian-2 distribution and shows that LB-zero achieves asymptotic zero-waiting in the sub-Halfin-Whitt regime. 
Inspired by \cite{LiuGonYin_21}, this paper develops the ISSP technique for general Coxian distributions with a finite number of phases and establishes its large-system insensitivity. 
We remark that \cite{LiuGonYin_21} utilized a key property of Coxian-$2$ service time distribution that a job in the first phase (phase-$1$) either departs or enters the last phase (phase-$2$) immediately, which does not hold under a general Coxian distribution which may have many phases. 

In terms of the proof, each step in ISSP utilizes the tail bound in \cite{BerGamTsi_01} to ``peel off'' a low probability subspace. The tail bound is based on the Lyapunov drift analysis, and is a critical step to prove state-space collapse in the traditional heavy traffic regime with a fixed number of servers  (see e.g., \cite{ErySri_12,MagSri_16,WanMagSri_21}). The key difference is that \cite{ErySri_12} utilizes the tail bound only once while ISSP repeatedly utilizes the tail bound guided by an iterative relation between the upper and lower bounds.

\section{Model and State Representation}
We consider a many-server system with $N$ homogeneous servers, where job arrival follows a Poisson process with rate $\lambda N$ with $\lambda = 1 - N^{-\alpha}, 0<\alpha <0.5$, i.e. the system is in the sub-Halfin-Whitt regime. We assume the service times follow the Coxian distribution with $M$ phases
as shown in Figure~\ref{fig:coxM}, where $\mu_m>0$ is the rate a job finishes phase $m$ when in service and $0\leq p_i<1, 1 \leq i < M$ is
the probability that a job enters phase $i+1$ after completing  phase $i$ and $p_M=1.$ Note we assume $\lambda = 1 - N^{-\alpha}$ for the ease of exposition and our results can be easily extended to the case that $\lambda = 1 - \beta N^{-\alpha}$ with any positive constant $\beta >0$ independent with $N.$ As convention, we define $\sum_{i=a}^b x_i = 0$ if $a > b$ and $\prod_{i=a}^b x_i = 1$ if $a > b$ for the series $\{x_i, i \geq 1\}.$ 
\begin{figure}[!htbp]
  \centering
  \includegraphics[width=5.3in]{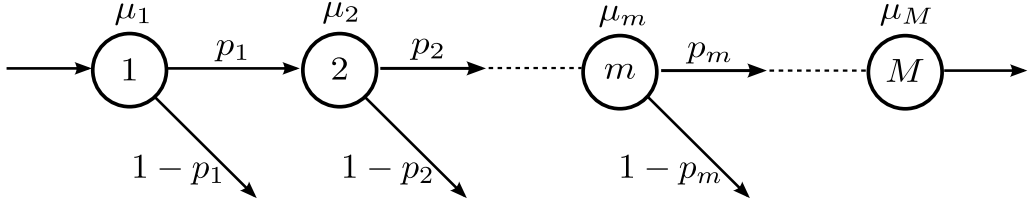}
  \caption{The Coxian-$M$ Distribution: $\mu_m$ is the service rate in phase $m$ and $p_m$ is the probability entering phase $m+1$ after finishing service in phase $m.$}
  \label{fig:coxM}
\end{figure}

Without loss of generality, we normalize the mean service time to be one, i.e. $$\sum_{m=1}^{M}v_m=1~~~\text{with}~~~v_m=\frac{\prod_{i=1}^{m-1}p_i}{\mu_m},$$
where $v_m$ is viewed as the average time spent in phase $m$ for a job. Given the unit service rate, $\lambda$ is the normalized load of the system and $\lambda v_m$ is the normalized load of jobs in phase $m.$ 

Taking Coxian-$3$ distribution as an example (see Figure~\ref{model-coxianM}), a job is colored in black if it waits in the buffer, and colored in light red, blue, and green when it is in phase $1,$ $2$ and $3$, respectively. Jobs are served with the FIFO discipline and we assume each server has a buffer of size $b-1,$ so can hold at most $b$ jobs ($b-1$ in the buffer and one in service). The assumption of finite buffer is imposed due to a technical reason and will be explained later in the paper. Relaxing the finite-buffer assumption is not trivial technically but we conjecture our results hold without this assumption.

\begin{figure}[!htbp]
  \centering
  \includegraphics[width=4.0in]{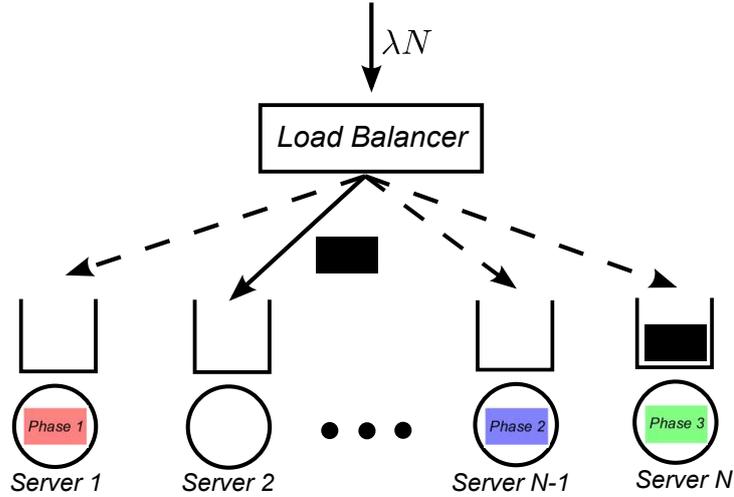}
  \caption{~Load Balancing in Many-Server Systems with Coxian-$3:$ jobs colored in black, light red, blue, and green represent jobs in the buffer, in phase $1,$ $2,$ and $3,$ respectively.}
  \label{model-coxianM}
\end{figure}

To represent the system, define $Q_{j,m}(t)$ ($m=1,2,\cdots, M$) to be the fraction of servers which have $j$ jobs at time $t$ and the one in service is in phase $m$. Because an idle server does not have a phase,  we define $Q_{0,1}(t)$ to be the fraction of servers that are idle at time $t$ and $Q_{0,m}(t)=0, \forall 2 \leq m \leq M$ for convenience. We stack $Q_{j,m}(t)$ to a matrix $Q(t)\in R^{b\times M}$ such that the $(j,m)$th entry of the matrix is $Q_{j,m}(t).$ 
We further define $S_{i,m}(t) = \sum_{j \geq i} Q_{j,m}(t)$ and $S_{i}(t) = \sum_{m=1}^M S_{i,m}(t).$ Therefore,  $S_{i,m}(t)$ is the fraction of servers which have at least $i$ jobs and the job in service is in phase $m$ at time $t$ and $S_i(t)$ is the fraction of servers with at least $i$ jobs at time $t.$ Stack $S_{j,m}(t)$ to be a matrix $S(t)$ such that the $(j,m)$th entry of the matrix is $S_{j,m}(t).$ Since $Q(t)$ and $S(t)$ have a one-to-one mapping, we focus on $S(t)$ throughout the paper. We consider load balancing policies which dispatch jobs to servers based on $S(t)$ and under which the finite-state CTMC $\{S(t), t\geq 0\}$ is irreducible, and so it has a unique stationary distribution. This includes well-known load balancing policies such as JSQ \cite{Win_77,EscGam_18,Bra_20}, JIQ \cite{LuXieKli_11,Sto_15}, I1F \cite{GupWal_19} and Po$d$ \cite{Mit_96,VveDobKar_96}

Let $Q_{j,m}$ be a random variable that has the distribution of $Q_{j,m}(t)$ at steady state. Correspondingly, define $S_{i,m}= \sum_{j \geq i} Q_{j,m}$ and $S_{i} = \sum_{m} S_{i,m}.$ In other words,  $S_{i,m}$ is the fraction of servers which have at least $i$ jobs and the job in service is in phase $m$ and $S_i$ is the fraction of servers with at least $i$ jobs, both at steady state. Consider a system with $10$ servers and Coxian-3 service time distribution. A realization of state representation $S_{i,m}$ is shown in Figure~\ref{model-state} and Table \ref{tab:exa}. Define $S\in R^{b\times M}$ to be a matrix such that the $(i,m)$th entry is  $S_{i,m}$ and $s \in \mathbb R^{b\times M}$ to be a realization of $S.$ Define $\mathcal S^{(N)}$ to be a set of $s$ as follows
\begin{align}
\mathcal{S}^{(N)}  = \left\{ s ~\left|~ 1\geq s_{1,m}\geq \cdots \geq s_{b,m}\geq  0, ~ 1\geq \sum_{m=1}^Ms_{1,m}, ~ Ns_{i,m}\in \mathbb{N}, ~ \forall i, m\right.\right\} \label{state_S},
\end{align}
i.e., $\mathcal S^{(N)}$ is the set of all possible $s$ in a system with $N$ servers. 

\begin{figure}[!htbp]
  \centering
  \includegraphics[width=6.1in]{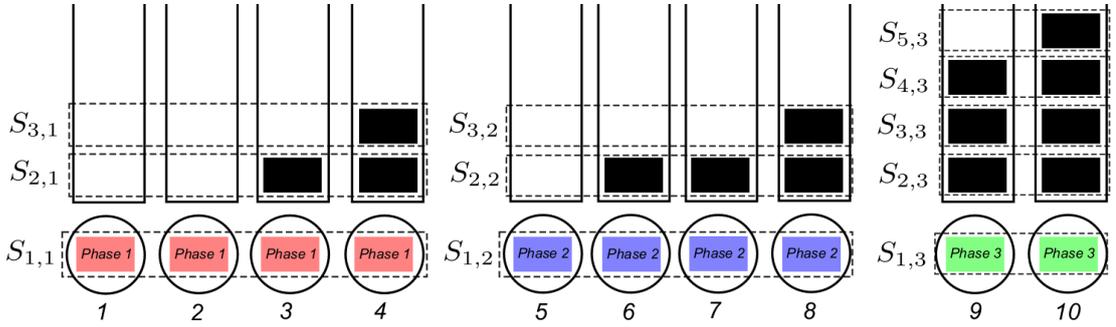}
  \caption{An example of the realization of $S_{i,m}$ in a system with $10$ servers and Coxian-3 service time distribution}
  \label{model-state}
\end{figure}

\begin{table}[!htbp]
\centering
\begin{tabular}{ccc|ccc|ccccc}
$Q_{1,1}$ & $Q_{2,1}$ & $Q_{3,1}$ & $Q_{1,2}$ & $Q_{2,2}$ & $Q_{3,2}$ & $Q_{1,3}$ & $Q_{2,3}$ & $Q_{3,3}$ & $Q_{4,3}$ & $Q_{5,3}$ \\ \hline
0.2       & 0.1       & 0.1       & 0.1       & 0.2       & 0.1       & 0.0       & 0.0       & 0.0       & 0.1       & 0.1       \\ \hline
$S_{1,1}$ & $S_{2,1}$ & $S_{3,1}$ & $S_{1,2}$ & $S_{2,2}$ & $S_{3,2}$ & $S_{1,3}$ & $S_{2,3}$ & $S_{3,3}$ & $S_{4,3}$ & $S_{5,3}$ \\ \hline
0.4       & 0.2       & 0.1       & 0.4       & 0.3       & 0.1       & 0.2       & 0.2       & 0.2       & 0.2       & 0.1      
\end{tabular}
\caption{~The corresponding values of $Q_{i,m}$ and $S_{i,m}$ in Figure~\ref{model-state}: The system in Figure \ref{model-state} includes four busy servers that are serving jobs in phase $1,$ including two servers without any waiting jobs $(Q_{1,1}=0.2),$ one server with one waiting job $(Q_{2,1}=0.1),$ and one server with two waiting jobs $(Q_{3,1}=0.1);$ four busy servers that are serving jobs in phase $2,$ including one server without any waiting jobs $(Q_{1,2}=0.1),$ two server with one waiting job $(Q_{2,2}=0.2),$ and one server with two waiting jobs $(Q_{3,2}=0.1);$  and two busy servers that are serving jobs in phase $3,$ including one server with three waiting jobs $(Q_{4,3}=0.1)$ and one server with four waiting jobs $(Q_{5,3}=0.1).$ \label{tab:exa}}
\end{table}

\section{Main Results}
Before introducing the main results, we first define several constants that will be used throughout the paper: 
\begin{align*}
a_m =& \frac{\mu_m}{p_1\mu_1 + \mu_m}& 2\leq m\leq M \\
b_m =& (1-a_m)\left(1 + \sum_{r=m+1}^M \frac{v_{r}}{v_{1}}\right) - \frac{a_mv_m}{v_1}&2\leq m\leq M\\  
\xi=&\sum_{m=2}^{M} b_m \prod_{j=m+1}^M a_j&\\ 
c_m =& 5(1-a_m)\sum_{r=m+1}^M (r-1)v_r + 5a_m \sum_{r=2}^{m-1} \frac{\mu_r v_r}{\mu_m} + 5(m-2)a_m v_m + 5 - a_m & 2\leq m \leq M\\
C_M =& \sum_{m=2}^{M} c_m\prod_{j=m+1}^M a_j.&
\end{align*}
These constants are positive constants and $0<\xi<1.$ The proof can be found in Appendix \ref{app:constants}. Their values depend on the Coxian-$M$ distribution but are independent of $N.$

We further define $A_1(s)$ to be the probability that an incoming job is routed to a busy server conditioned on that the system is in state $s \in \mathcal{S}^{(N)};$ i.e.  $$A_1(s)= \mathbb P \left(\left. \text{an incoming job is routed to a busy server} \right| {S(t)=s}\right).$$  

We now consider the set of zero-waiting load balancing policies, named as {\em LB-zero}, 
\begin{align}\hbox{\bf LB-zero}:\ \Pi=\left\{\pi~ \left|~ \text{Under policy}~ \pi, A_1(s)\leq \frac{1}{\sqrt{N}} ~\text{for any}~ s\in{\mathcal{S}^{(N)}} \right.\right.  \left.\left. \text{such that}~s_1\leq 1-\frac{1}{N^\alpha\log N}\right.\right\}. \nonumber
\end{align} Note that this class of policies is similar to the one considered in \cite{LiuYin_20}. Several well-known policies satisfy this condition, as summarized in Table \ref{tab:lb in pi}.

\begin{table}[H] 
\begin{tabular}{|c|l|c|}
\hline
Load Balancing Policy                                                                         & \multicolumn{1}{c|}{Description}                                                                                                                                                      & Condition                                                                                                                          \\ \hline
Join-the-Shortest-Queue                                                                       & route an incoming job to the least loaded server                                                                                                                                                 & $A_1(s)=0$ for $s_{1}<1$                                               
                                             \\ \hline
Join-the-Idle-Queue                                                                           & \begin{tabular}[c]{@{}l@{}}route an incoming job to an idle server \\ if available and otherwise, to a server \\ chosen uniformly at random.\end{tabular}                               & $A_1(s)=0$ for $s_1<1$                                 
                                    \\ \hline
Idle-One-First                                                                                & \begin{tabular}[c]{@{}l@{}}route an incoming job to an idle server\\ if available; to a server with one job if available; \\ and otherwise, to a randomly selected server.\end{tabular} & $A_1(s)=0$ for $s_1<1$                                                                                                                           \\ \hline
\begin{tabular}[c]{@{}c@{}}Power-of-$d$-Choices\\ with $d\geq N^\alpha\log^2 N,$\end{tabular} & \begin{tabular}[c]{@{}l@{}} sample $d$ servers uniformly at random and \\ route the job to the least loaded \\ server among the $d$ servers.\end{tabular}                                     & \begin{tabular}[c]{@{}c@{}}For sufficiently large $N,$\\ $A_1(s)\leq \frac{1}{\sqrt{N}}$ \\ for $s_1\leq 1-\frac{1}{N^\alpha\log N}$\end{tabular} \\ \hline
\end{tabular} 
\caption{Examples of LB-Zero Policies:  Join-the-Shortest-Queue, Join-the-Idle-Queue, Idle-One-First, and Power-of-$d$-Choices with a carefully chosen $d.$} \label{tab:lb in pi}
\end{table}

To prove the large-system insensitivity of LB-zero, we first show that $S_{1,m}$ is ``close'' to $s_{1,m}^* = \lambda v_m,$ which is the normalized load from phase-$m$ of the jobs, and is also the equilibrium point of the mean-field system assuming zero-waiting (details can be found in Section \ref{sec: sys dynamics} and \ref{sec: issc}). We call $s^*$ the zero-waiting equilibrium. Theorem \ref{Thm:equilibrium} shows that at the steady-state, $S_{1,m}$ concentrates around the zero-waiting equilibrium $s_{1,m}^*$ for large $N.$ The proof of this theorem can be found in Section \ref{sec: prove thm 1}. 
\begin{theorem}[High Probability Bound]\label{Thm:equilibrium}
Define $\theta_m = \frac{6\mu_1 v_m +  5(m-1)v_m}{C}, \forall 1 \leq  m \leq M$ with $C = \sqrt{\frac{2\bar v^2\log (1/\xi)}{3M + (3M+4)\log (1/\xi)}}$ and $\bar v = \min_m v_m.$ For any LB-zero policy in $\Pi,$ the following bound holds  
$$\mathbb P\left(s_{1,m}^* + \frac{\theta_m\log N}{\sqrt{N}}\leq S_{1,m} \leq s_{1,m}^* + \frac{1}{N^\alpha} - \frac{\sum_{r\neq m}\theta_r\log N}{\sqrt{N}}\right) \geq 1-\frac{M}{N^3}$$ when $N$ satisfies
\begin{align}
 \min\left\{\sum_{m=1}^M\theta_m,\frac{C(1-\xi)}{2\mu_1 C_M}\right\} N^{0.5-\alpha} \geq \log N \geq \max\left\{\frac{2\mu_1}{1-\xi}, \frac{C}{\mu_1}\right\}. 
\end{align}
 \qed
\end{theorem}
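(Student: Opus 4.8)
The plan is to establish, for each phase $m$ separately, a high‑probability band for $S_{1,m}$ and then union‑bound over the $M$ phases; the bands are produced by an iterated Lyapunov‑drift argument, which is the ``iterative state‑space peeling'' the introduction announces. The starting point is the drift of each coordinate of interest at a state $s$. For $m\ge2$ the coordinate $S_{1,m}$ moves only through phase transitions, so its drift equals $p_{m-1}\mu_{m-1}s_{1,m-1}-\mu_m s_{1,m}$; for $m=1$ one additionally gains arrivals routed to idle servers and completions at queued servers, so the drift of $S_{1,1}$ equals $\lambda(1-A_1(s))-\mu_1 s_{1,1}+\sum_{r}\mu_r(1-p_r)s_{2,r}$ (the last phase always completing a job). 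I would also record the drift of $S_1=\sum_m S_{1,m}$ and of the buffer coordinates $s_{i,m}$, $i\ge2$. Each drift carries a strict restoring term ($-\mu_m s_{1,m}$, etc.), so the corresponding coordinate is pulled toward a target that is affine in the \emph{other} coordinates, in $A_1(s)$, and in the buffer occupancies; the zero‑waiting equilibrium $s^*_{1,m}=\lambda v_m$ is the simultaneous fixed point of these target maps when $A_1\equiv0$ and the buffers are empty, and the $m$‑th map contracts with factor $a_m=\mu_m/(p_1\mu_1+\mu_m)\in(0,1)$, with overall rate $\xi=\sum_{m=2}^M b_m\prod_{j=m+1}^M a_j\in(0,1)$ — which is precisely why those constants are isolated.

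The engine is the stationary tail bound of \cite{BerGamTsi_01}: for a nonnegative Lyapunov function with uniformly bounded jumps whose drift is $\le-\gamma$ outside a set $\{V\le B\}$, the stationary law puts mass at most $\rho^{k}$ on $\{V\ge B+k\nu_{\max}\}$ for a $\rho<1$ set by $\gamma$ and the total jump rate (the finite buffer makes the state space finite and the jumps uniformly bounded, so the bound applies). I would invoke it repeatedly, ``peeling'' one low‑probability event at a time and organizing the peeling into rounds. The first rounds give coarse bounds — in particular, from the drift of $S_1$ (with the rough phase bounds from the earliest rounds) and $\lambda=1-N^{-\alpha}$ one gets $\mathbb P(S_1>1-\tfrac{1}{N^{\alpha}\log N})\le N^{-4}$, so that on the complement the LB‑zero guarantee $A_1(S)\le N^{-1/2}$ is in force; later rounds sharpen. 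In a generic step, having already peeled off (hence being free to condition on) bands for $S_{1,1},\dots,S_{1,m-1}$ and for the relevant buffer coordinates, the target in the drift of $S_{1,m}$ is pinned to within the current slack, so both $V=(\ell_m-S_{1,m})^{+}$ and $V=(S_{1,m}-u_m)^{+}$ have drift $\le-\Theta(N^{-1/2})$ outside a ball of radius $\Theta(N^{-1/2})$; since the jumps of $S_{1,m}$ are $\Theta(N^{-1})$ at total rate $\Theta(N)$, \cite{BerGamTsi_01} peels the $\Theta(\log N)$‑step excursion $\{S_{1,m}\notin[\ell_m,u_m]\}$ with probability $\le N^{-4}$ and widens the slack by only $\Theta(\tfrac{\log N}{\sqrt N})$; the bookkeeping of these slacks through the phases is exactly what $\theta_m$, $C$, $c_m$, $C_M$ encode.

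Because the drift of $S_{1,1}$ feeds on the buffer coordinates while the buffer dynamics feed back on the $S_{1,m}$'s, a single sweep does not close, so ISSP iterates the sweep: the induced map on the band endpoints is a contraction with rate $\xi<1$ about $s^*$, so after $O(\log N)$ rounds the bands reach width $O(N^{-\alpha})$ (the $N^{-\alpha}$ inherited from $1-\lambda$) plus a geometric‑series slack that remains $O(\tfrac{\log N}{\sqrt N})$. Combining these per‑phase one‑sided bounds with $\sum_r S_{1,r}=S_1\le1$ (which turns the per‑phase bounds on one side into the complementary ones) yields the displayed band for $S_{1,m}$, and a union bound over the $M$ per‑phase bad events — each of probability $\le N^{-3}$, the $O(1)$ auxiliary events being of strictly smaller order $N^{-4}$ — gives probability $\ge1-M/N^{3}$; the displayed condition on $N$ is just ``$N$ large enough in terms of the Coxian parameters'', which is consistent since $0<\alpha<0.5$ makes $N^{0.5-\alpha}$ dominate $\log N$.

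The hard part will be making the coupling rigorous: no single coordinate's drift is a function of that coordinate alone, and the LB‑zero guarantee is only available on $\{s_1\le1-\tfrac{1}{N^{\alpha}\log N}\}$, so every drift estimate must be run on the conditional chain restricted to the current good set, and one must control the \emph{entire} excursion out of the good set, not merely its endpoint — this is what forces the order in which events are peeled. The second delicate point is that the accumulated slack must not blow up across the $M$ phases and $O(\log N)$ rounds, which rests on $0<\xi<1$ together with the positivity of $a_m,b_m,c_m,C_M$ established in Appendix~\ref{app:constants}.
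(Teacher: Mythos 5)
Your high-level architecture --- iterative Lyapunov-drift peeling with contraction rate $\xi$, closing via $\sum_m S_{1,m}\le1$ and a union bound over $M$ phases --- matches the paper's. The gap is in the step you take as your entry point. You propose to first establish $\mathbb P\bigl(S_1>1-\tfrac{1}{N^\alpha\log N}\bigr)\le N^{-4}$ from the drift of $S_1$ ``with the rough phase bounds from the earliest rounds,'' so that the LB-zero guarantee $A_1(S)\le N^{-1/2}$ is available before the main iteration begins. This cannot be made to close. The drift of $S_1$ is $\lambda(1-A_1)-\sum_m(1-p_m)\mu_m s_{1,m}+\sum_m(1-p_m)\mu_m s_{2,m}$, and the departure rate $\sum_m(1-p_m)\mu_m s_{1,m}$ is not a function of $s_1$ alone; since $\sum_m(1-p_m)\mu_m v_m=1$, generically $w_l=\min_m(1-p_m)\mu_m<1$, so there are states with $s_1\approx1$ whose departure rate is $\approx w_l<\lambda+N^{-\alpha}$ and the drift of $S_1$ is not negative there. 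The ``rough phase bounds from the earliest rounds'' are trivial ($L_{1,m}(0)=0$, $U_m(0)=1$ in the paper's notation) and give no handle on the phase distribution; the refined phase bounds that would control the departure rate are precisely what you are in the middle of proving, so the argument is circular.

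The mechanism the paper uses instead --- and which your sketch does not supply --- is that the LB-zero condition is activated \emph{inside} each Lyapunov step, not in advance, via the conditional tail bound in Lemma~\ref{tail-bound-cond} (from \cite{WanMagSri_21}). One needs negative drift only on $\{V\ge B\}\cap\mathcal E$, where $\mathcal E$ is the high-probability set carried over from the previous peel; the penalty for leaving $\mathcal E$ enters additively as $\beta\,\mathbb P(S\notin\mathcal E)$. For the lower bound on $S_{1,1}$ (Lemma~\ref{lem: iter head tail}) the good set is $\mathcal E=\{\sum_{m\ge2}s_{1,m}\le U_M(n)\}$, and on $\{V\ge B\}\cap\mathcal E$ one automatically has $s_1\le 1-\tfrac{1-\xi}{2\mu_1N^\alpha}-\tfrac{2\Delta}{C}\le1-\tfrac{1}{N^\alpha\log N}$, which is exactly where LB-zero gives $A_1\le N^{-1/2}$. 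This is also why the paper's upper bounds are on prefix sums $\sum_{r=2}^mS_{1,r}$ rather than per-phase: the prefix-sum drift exploits $s_1\le1$ to produce the affine recursion $U_m=(1-a_m)-b_mL_{1,1}+a_mU_{m-1}+O(\Delta)$, and $U_M$ is precisely the quantity fed into $\mathcal E$ to protect the drift argument for $S_{1,1}$ at the next round; your outline never introduces an object that plays this feedback role. Finally, your worry about ``controlling the entire excursion out of the good set'' is exactly what Lemma~\ref{tail-bound-cond} is designed to avoid: it asks only for a uniform constant bound $\delta$ on the drift outside $\mathcal E$, not for pathwise control.
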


\begin{remark}
Theorem \ref{Thm:equilibrium} shows that $S_{1,m}$ differs from  $s_{1,m}^*$ by at most $\max\left\{\frac{\theta_m\log N}{\sqrt{N}}, \frac{1}{N^\alpha}\right\}$ with a probability at least $1-M/N^3,$ which implies that the convergence the steady-state $S_{1,m}, \forall m$ to the zero-waiting equilibrium as $N\to \infty$ in probability and mean-square senses.  
To be best of our knowledge, this is the first result to establish such a  steady-state convergence for a load balancing system under Coxian-$M$ service time distributions in the heavy-traffic regime. Since the high probability bound holds for a large but finite $N,$ it also provides the rate of convergence.
\end{remark}

From Theorem \ref{Thm:equilibrium}, it is not clear whether the probability of waiting approaches zero under an LB-zero policy, which will be studied in the next theorem. Let $\mathcal W$ denote the event that an incoming job is routed to a busy server in the system, and $\mathbb P(\mathcal W)$ denote the probability of this event at steady-state.
We have the following result on the waiting probability. The proof of Theorem \ref{Thm:main} can be found in Section \ref{sec: prove thm 2}.

\begin{theorem}\label{Thm:main}
Define $w_m = (1-p_m)\mu_m,$ $w_u=\max_m w_m,$ $w_l=\min_m w_m,$ $\mu_{\max}=\max_m \mu_m,$ $\zeta = \frac{4w_ub}{w_l}[(\frac{1}{w_l}-\frac{1}{w_u})\sum_m \theta_m w_m + \frac{1}{w_l}+6],$ and $k = \frac{\sum_{m}\theta_m w_m}{w_u}+(1+\frac{w_l}{4w_ub})\zeta - \sum_m \theta_m.$ 
Under an LB-zero policy in $\Pi,$ the following result holds  
\begin{align}\mathbb P(\mathcal W)\leq \frac{1}{\sqrt{N}} + \frac{10\mu_{\max}+4}{N^{0.5-\alpha}\log N}. \label{prob: waiting}
\end{align} 
when $N$ satisfies
\begin{align}
 \min\left\{\frac{1}{2k},\sum_{m=1}^M\theta_m,\frac{C(1-\xi)}{2\mu_1 C_M}\right\} N^{0.5-\alpha} \geq \log N \geq \max\left\{\log\left(\frac{1}{\xi}\right), \frac{2\mu_1}{1-\xi}, \frac{4b}{w_l \zeta}, \frac{C}{\mu_1}, 2\right\}. \label{N-cond}    
\end{align}
\qed
\end{theorem}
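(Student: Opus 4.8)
The plan is to reduce the waiting probability to a tail bound on the fraction of busy servers and then to settle that tail with a Lyapunov--drift estimate layered on top of Theorem~\ref{Thm:equilibrium}.

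\emph{Step 1: reduction via PASTA and the defining property of LB-zero.} Since arrivals are Poisson, an incoming job sees the stationary state, so $\mathbb P(\mathcal W)=\mathbb E[A_1(S)]$. Split this expectation according to whether $S_1\le L:=1-\tfrac1{N^\alpha\log N}$: on the first event every $\pi\in\Pi$ obeys $A_1(s)\le\tfrac1{\sqrt N}$ by definition of $\Pi$, and on its complement we bound $A_1(s)\le 1$. This gives $\mathbb P(\mathcal W)\le\tfrac1{\sqrt N}+\mathbb P(S_1>L)$, which already produces the $\tfrac1{\sqrt N}$ term of \eqref{prob: waiting}; it remains to show $\mathbb P(S_1>L)\le\tfrac{10\mu_{\max}+4}{N^{0.5-\alpha}\log N}$. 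A helpful companion, used for part of the congestion term, is the stationary balance obtained by zeroing the expected drift of the number of busy servers: the rate at which idle servers fill, $\lambda N\,\mathbb E[1-A_1(S)]$, equals the rate at which servers empty, $N\,\mathbb E\!\left[\sum_m w_m(S_{1,m}-S_{2,m})\right]$; together with the per-phase conservation $\mu_{m-1}p_{m-1}\mathbb E[S_{1,m-1}]=\mu_m\mathbb E[S_{1,m}]$ (so $\mathbb E[S_{1,m}]=v_m\mathbb E[S_1]$, and $\mathbb E[S_1]\le\lambda$ because accepted throughput equals departure throughput) this rearranges to $\mathbb P(\mathcal W)=\tfrac{\lambda-\mathbb E[S_1]}{\lambda}+\tfrac1\lambda\mathbb E\!\left[\sum_m w_m S_{2,m}\right]$, and the lower bound of Theorem~\ref{Thm:equilibrium} makes the first summand $o\!\big(\tfrac1{N^{0.5-\alpha}\log N}\big)$. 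So either way everything comes down to showing that only a vanishing fraction of servers hold two or more jobs, equivalently that $S_1$ stays below $1-\tfrac1{N^\alpha\log N}$ with very high probability.

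\emph{Step 2: the congested event is rare, via drift on top of Theorem~\ref{Thm:equilibrium}.} The obstacle is that summing the per-phase upper bounds of Theorem~\ref{Thm:equilibrium} over $m$ only yields $S_1\le\lambda+\Theta(M/N^\alpha)$, which exceeds $1$ and is vacuous; the aggregate $S_1$ (or, equivalently for this purpose, the total number of jobs) must be controlled directly, and this is where the constants $w_l,w_u,\zeta,k$ and the extra clauses of \eqref{N-cond} come in. I would do this in two layers. First, a drift estimate on the total backlog $N\sum_m\sum_{j\ge2}(j-1)Q_{j,m}$: while $S_1\le L$ the backlog grows at rate at most $\lambda N A_1(S)\le\lambda\sqrt N$ by the LB-zero property, whereas it drains at rate at least $\tfrac{w_l}{b-1}$ times its current value (a backlogged server carries at most $b-1$ waiting jobs and sheds one at rate at least $w_l$), so the backlog is pinned below a threshold of order $\tfrac{b\sqrt N}{w_l}$; the constant $\zeta$ and the requirement $\log N\ge\tfrac{4b}{w_l\zeta}$ encode this. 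Second, on the event that both Theorem~\ref{Thm:equilibrium} holds and the backlog is $O(\sqrt N)$, the number of busy servers has a uniformly negative drift once it is sufficiently above its nominal value $\lambda N$: the $S_{1,m}$ are pinned within $O(\log N/\sqrt N)$ of $s_{1,m}^*$, whose departure-rate-weighted sum is exactly $\lambda$, while $\sum_m w_m S_{2,m}=O(1/\sqrt N)$ is negligible, so a sustained surplus of busy servers would sustain a departure rate strictly above $\lambda$, impossible in stationarity; the constant $k$ and the clause $\tfrac1{2k}N^{0.5-\alpha}\ge\log N$ calibrate this negative drift against the unit jumps of $NS_1$. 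Feeding both estimates into the Lyapunov tail bound of \cite{BerGamTsi_01} --- the same device ISSP uses to peel off a low-probability subspace --- and evaluating it at the level $N(L-\lambda)=N^{1-\alpha}(1-1/\log N)$ gives $\mathbb P(S_1>L)\le\tfrac{10\mu_{\max}+4}{N^{0.5-\alpha}\log N}$ once the $M/N^3$ probability of the complement of Theorem~\ref{Thm:equilibrium}'s event is added back in.

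\emph{Assembling, and the main difficulty.} Combining Steps 1 and 2 yields \eqref{prob: waiting} for all $N$ meeting \eqref{N-cond}. The hard part is Step 2, and it is exactly why ISSP is needed: the drifts of the natural Lyapunov functions (the backlog, the number of busy servers, the total job count) are negative only on the high-probability region carved out by Theorem~\ref{Thm:equilibrium} and by the preliminary backlog bound, not pathwise, so the \cite{BerGamTsi_01} tail bound cannot be invoked as a black box. The low-probability ``bad'' states --- an atypical phase mix, or an unusually large backlog --- where the drift may point the wrong way must be peeled off iteratively, as successive layers, keeping their total contribution at the $N^{-3}$ scale, before the drift/tail estimate is run on what remains. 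The second, more clerical difficulty is checking that $\zeta$ and $k$ are positive and that the thresholds in \eqref{N-cond} line up, which is where the Coxian-specific algebra behind the definitions of $w_m,\zeta,k$ (and the appendix on positivity of the constants) is used.
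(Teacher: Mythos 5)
Your Step~1 is correct and matches the paper's opening move (the paper conditions on $\sum_{i=1}^b S_i \le 1 - \tfrac{1}{N^\alpha\log N}$ rather than on $S_1 \le L$, but since $S_1\le\sum_i S_i$ that is cosmetic), and your stationary-balance identity $\mathbb P(\mathcal W) = \tfrac{\lambda - \mathbb E[S_1]}{\lambda} + \tfrac1\lambda\mathbb E\bigl[\sum_m w_m S_{2,m}\bigr]$ is a correct consequence of the per-phase conservation laws. Step~2, however, contains a circularity that defeats the two-layer scheme as written. Your backlog drift bound --- grows at rate $\le \lambda\sqrt N$, drains at rate $\ge \tfrac{w_l}{b-1}$ times its value --- uses $A_1(S)\le 1/\sqrt N$, which is only guaranteed while $S_1\le L$; but $\mathbb P(S_1 > L)$ is exactly the quantity you set out to control. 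You cannot first peel off $\{S_1 > L\}$ and then run drift on what remains, because the $\beta\,\mathbb P(S\notin\mathcal E)$ term in the tail bound (Lemma~\ref{tail-bound-cond}) requires an a~priori estimate on $\mathbb P(S_1 > L)$ that you do not yet have; and, as you note yourself, summing the per-phase upper bounds of Theorem~\ref{Thm:equilibrium} only yields $S_1\le\lambda+\Theta(M/N^\alpha)>1$, which is vacuous.

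The paper breaks this cycle by a device you do not propose: a \emph{single} Lyapunov function $V(s)=\min\bigl\{\lambda+k\Delta-s_1,\ \sum_{i\ge 2}s_i\bigr\}$ (Lemma~\ref{SSC:s1 s2}). Whenever $V(s)\ge\zeta\Delta$, both of your controls are automatically in force at once: $s_1\le\lambda+(k-\zeta)\Delta\le 1-\tfrac{1-\xi}{2\mu_1 N^\alpha}$, so LB-zero supplies the $1/\sqrt N$ arrival bound, \emph{and} $\sum_{i\ge 2}s_i\ge\zeta\Delta$, so $s_2\ge\zeta\Delta/b$ and the backlogged departure rate $\sum_m w_m s_{2,m}$ is uniformly at least $w_l\zeta\Delta/b$. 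With the $\min$, the drift of $V$ is negative on every large-$V$ state in $\mathcal E=\{s_{1,m}\ge s_{1,m}^*-\theta_m\Delta,\ \forall m\}$, and the only exceptional set to be peeled off is Theorem~\ref{Thm:equilibrium}'s $M/N^3$-probability event; that is exactly Lemma~\ref{SSC:out}. The constants $k$ and $\zeta$ are solved from the two resulting drift-negativity conditions, one per branch of the $\min$, which is where your ``clerical'' step lives. The control on $s_1$ and the control on the backlog must be coupled in one function, not layered sequentially. A secondary routing difference: the paper does not apply a tail bound directly to $\mathbb P(S_1>L)$; it proves Lemma~\ref{lem:main} via Stein's method (Lemma~\ref{lem:gen diff}), uses $\mathcal S_{ssp}$ from Lemmas~\ref{SSC:s1 s2 negative} and~\ref{SSC:out} to make the generator-difference term non-positive on the high-probability region, and then converts $\mathbb E[\max\{\sum_i S_i-\lambda-k\Delta,0\}]$ to the desired tail by Markov. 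Your direct Lyapunov-on-$S_1$ route may also be made to work, but only once the min-Lyapunov lemma is in hand --- and that lemma is the real content of the proof.
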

\begin{remark}
Theorem \ref{Thm:main} shows the waiting probability is $O(1/N^{0.5-\alpha})$ for a large but finite $N,$ which implies the asymptotic zero waiting, i.e. $\mathbb P(\mathcal W) \to 0$ as $N\to \infty$ in the sub-Halfin-Whitt regime. This asymptotic result implies LB-zero is large-system insensitive to Coxian-$M$ distributions with a finite number of phases.
\end{remark}
Next, we will establish these two main results. We first introduce the system dynamic of LB-zero in Section \ref{sec: sys dynamics} and present  ``iterative state-space peeling'' (ISSP) in Section \ref{sec: issc}, which is used to prove that the system lives near a limiting regime in Theorem \ref{Thm:equilibrium} in Section \ref{sec: prove thm 1} and to prove the large system insensitivity in Theorem \ref{Thm:main} in Section \ref{sec: prove thm 2}. 

\section{System Dynamics}\label{sec: sys dynamics}
Define $e_{i,m} \in \mathbb R^{b\times M}$ to be a $b\times M$-dimensional matrix with the $(i,m)$th entry being $1/N$ and all other entries being zero. Furthermore, define $A_{i,m}(s)$ to be the probability that an incoming job is routed to a server with at least $i$ jobs and the job in service is in phase $m$ given the system state $s,$ i.e.
\begin{align*}
A_{i,m}(s)=\mathbb P \left( \hbox{an incoming job is routed to a server with at least $i$ jobs} \right.\\
\left. \hbox{and the job in service is in phase $m$} ~|~{S(t)}=s\right). 
\end{align*}

Given state $s$ (or the corresponding $q$) of the CTMC, each of the following three events triggers a state transition, which is illustrated individually in Figure \ref{fig:events}. 
\begin{itemize}
\item Event 1: A job arrives and is routed to a server that has $i-1$ jobs and the job in service is in phase $m$ as in the left figure in Figure \ref{fig:events}. When this occurs, $q_{i,m}$ increases by $1/N,$ and $q_{i-1,m}$ decreases by $1/N$ (note $m=1$ if $i=1$ since we define the faction of idle servers to be $q_{0,1}$). So the CTMC has the following transition: 
\begin{align*}
q \to& ~q+e_{i,m}-e_{i-1,m},\\
s \to& ~s+e_{i,m},
\end{align*}
where the transition of $s$ can be verified according to the definition $s_{i,m}=\sum_{j\geq i} q_{j,m}$ so only $s_{i,m}$ increasing by $1/N.$
This event occurs with rate $$\lambda N(A_{i-1,m}(s)-A_{i,m}(s)),$$ where $A_{i-1,m}(s)-A_{i,m}(s)$ is the probability that an incoming job is routed to a server that has $i-1$ jobs and the job in service is in phase $m.$ 

\item Event 2: A server with $i$ jobs finishes serving a job in phase $m.$ The job departs from the system without entering into the next phase as in the middle figure in Figure \ref{fig:events}. When this event occurs, $q_{i,m}$ decreases by $1/N$ and $q_{i-1,1}$ increases by $1/N,$ so the CTMC has the following transition:
\begin{align*}
q \to& ~q-e_{i,m}+e_{i-1,1},\\
s \to& ~s-\sum_{j=1}^ie_{j,m}+\sum_{j=1}^{i-1}e_{j,1},
\end{align*}
where the transition of $s$ can be verified based on the definition $s_{i,m}=\sum_{j\geq i} q_{j,m}$ so $s_{j,m}$ decreases by $1/N$ for any $j\leq i$ and $s_{j,1}$ increases by $1/N$ for any $j< i.$
This event occurs with rate $$\mu_mNq_{i,m}(1-p_m),$$
where $\mu_mNq_{i,m}$ is the rate at which a job in phase $m$ finishes the service and $(1-p_m)$ is the probability that a job finishes phase $m$ and departs from the system immediately.

\item Event 3: A server with $i$ jobs finishes serving a job in phase $m,$ and the job enters the next phase $m+1$ as shown in the right figure in Figure \ref{fig:events}. 
When this event occurs, a server in state $(i,m)$ transits to state $(i,m+1),$ so $q_{i,m}$ decreases by $1/N$ and $q_{i,m+1}$ increases by $1/N.$ Therefore, the CTMC has the following transition:
\begin{align*}
q \to& ~q-e_{i,m}+e_{i,m+1},\\
s \to& ~s-\sum_{j=1}^i e_{j,m} + \sum_{j=1}^i e_{j,m+1},
\end{align*} 
where the transition of $s$ holds because $s_{i,m}$ decreases by $1/N$ for any $j \leq i$ and $s_{j,m+1}$ increases by $1/N$ for any $j\leq i.$ 
This event occurs with rate $$\mu_mNq_{i,m}p_m,$$
where $\mu_mNq_{i,m}$ is the rate at which a job in phase $m$ finishes the service and $p_m$ is the probability that a job enters phase $m+1$ after finishing  phase $m.$ 
\end{itemize}

\begin{figure}[H]
  \centering
  \includegraphics[width=5.9in]{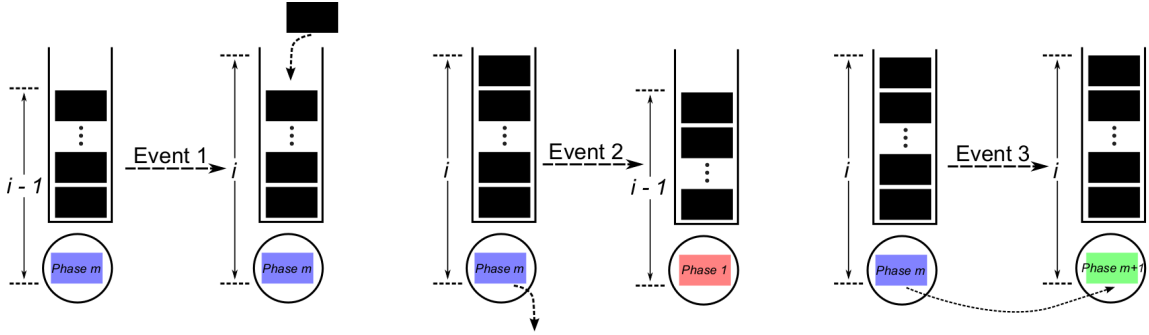}
  \caption{~Illustrations of State Transitions on ($q_{i,m}$ or $s_{i,m}$) Triggered by the Three Events: 1) a job arrives to a server with $i-1$ jobs and the job in service in phase $m;$ 2) a server with $i$ jobs finishes a job in phase $m,$ and the job departs from the system; 3) a server with $i$ jobs finishes a job in phase $m,$ and the job enters into the next phase.}
  \label{fig:events}
\end{figure}

Based on the three events above, we focus on $(S(t):t\geq 0)$ because $S(t)$ and $Q(t)$ has one-to-one mapping and the dynamics of $S(t)$ have a simpler form. Define $G$ to be the generator of CTMC $(S(t):t\geq 0).$ Given function $f: \mathcal S^{(N)} \to \mathbb{R},$ we have
\begin{align}
G f(s) = &\sum_{i=1}^{b}\sum_{m=1}^{M}\left[\lambda N(A_{i-1,m}(s)-A_{i,m}(s))(f(s + e_{i,m}) - f(s))  \right.  \label{G-Si-A}\\ 
&~~~~~\left. + (1-p_m)\mu_m Nq_{i,m}\left(f\left(s-\sum_{j=1}^ie_{j,m}+\sum_{j=1}^{i-1}e_{j,1}\right)-f(s)\right) \right. \label{G-Si-D}\\
&~~~~~\left. + p_m\mu_m Nq_{i,m}\left(f\left(s - \sum_{j=1}^ie_{j,m}+\sum_{j=1}^ie_{j,m+1}\right)-f(s)\right)\right]. \label{G-Si-change} 
\end{align}  

To understand the dynamics better, we write down the mean-filed model (MFM) according to the generator: 
\begin{align}
\dot s_{i,1} &= \lambda (A_{i-1,1}(s) - A_{i,1}(s)) + \sum_{m=2}^{M}(1-p_m)\mu_{m}s_{i+1,m}  - \mu_1 s_{i,1}, \label{mfm:s1} \\
\dot s_{i,m} &= \lambda (A_{i-1,m}(s) - A_{i,m}(s)) + p_{m-1}\mu_{m-1}s_{i,m-1} - \mu_m s_{i,m}, ~\forall m \geq 2. \label{mfm:sm} 
\end{align} 

This mean-field model is nonlinear in $s$ because $A_{i,m}(s)$ is a nonlinear function in $s,$ and its equilbirium point is difficult to calculate in general. However, suppose zero-waiting occurs, i.e., $s_{i,m} = 0, \forall i \geq 2,$ and the faction of jobs dropped is negligible, then we can obtain the following equilibrium
$$s_{1,m}^* = \lambda \frac{\prod_{i=1}^{m-1} p_i}{\mu_m}=\lambda v_m\quad \hbox{and}\quad s^*_{i,m}=0 \quad \forall i\geq 2.$$

We call $s^*$ {\em zero-waiting equilibrium} because it is a conjectured equilibrium by assuming zero-waiting. In this following analysis, we will not solve mean-field model \eqref{mfm:s1}-\eqref{mfm:sm} to check whether its equilibrium is close to $s^*$. Instead, we will directly prove $S_{1,m}$ concentrates around $s_{1,m}^*$ and zero waiting occurs at the steady-state with a high probability.  
\def\ThmmainA{\ref{Thm:equilibrium}}
\def\LemmainA{\ref{lem:equilibrium}}
\def\ThmmainB{\ref{Thm:main}}
\def\LemmainB{\ref{lem:main}}

\section{Iterative State-Space Peeling (ISSP)}\label{sec: issc}   
In this section, we illustrate the key idea  of ISSP, which will be applied to prove Theorem \ref{Thm:equilibrium}. 
Intuitively, the original stochastic system $S_{i,m}(t)$ and the steady-state $S_{i,m}$ would be close to the MFM $s_{i,m}(t)$ and  the zero-waiting equilibrium $s_{i,m}^*,$ respectively. 
However, due to ``non-monotonicity'' of the system, it is extremely challenging to justify this argument. To tackle the challenge, we develop a new technique, called ``Iterative State-Space Peeling'' (ISSP), which first identifies an iterative relation between the upper and lower bounds on the queue states, and then proves that the system lives in a regime concentrated around the ``fixed point' of the iterative bounds with a high probability. 

In particular, we focus on $S_{1,m}$ the number of busy servers with the job in service in phase $m$ because we hypothesize $S_{i,m} \to 0, \forall i \geq 2$ (in other words, the fraction of servers with any waiting jobs is negligible in a large system). 

Let $L_{1,m}(n)$ denote a high probability  lower bound on $S_{1,m}$ and $U_m(n)$ be a high probability upper bound on $\sum_{r=2}^{m} S_{1,r},$ established at the $n$th step of ISSP, i.e.
\begin{align}
    \mathbb P(S_{1,m} \geq L_{1,m}(n)) \quad \text{and}~~~~ \quad\mathbb P\left(\sum_{r=2}^{m} S_{1,r} \leq U_{m}(n)\right) \nonumber
\end{align}
are close to one. Our goal is to show that as $n$ increases, $L_{1,m}(n)$ and $U_m(n)$ approach the zero-waiting equilibrium $s_{1,m}^*$. Taking Coxian-$3$ distribution as an example, we need to show that as $n$ increases $L_{1,m}(n)\to s_{1,m}^*, \forall m,$ $U_2 \to s_{1,2}^*$ and $U_3 \to s_{1,2}^*+s_{1,3}^*.$

\subsection{ISSP: Iterative lower and upper bounds}
Our ``iterative state-space peeling'' (ISSP) is based on the following iterative relation between the upper and lower bounds on the queue states, denoted by $L_{1,m}$ and $U_m, \forall m \geq 2:$
\begin{align}
    L_{1,1}(n+1) &\approx \min\{s_{1,1}^*, 1 - U_M(n)\}\label{eq: iter head tail}\\
    L_{1,m}(n+1) &\approx \frac{v_m}{v_{m-1}} L_{1,m-1}(n+1) \label{eq: iter lower bound} \\
    U_m(n+1) &\approx 1- a_m - b_m L_{1,1}(n+1) + a_m U_{m-1}(n+1)\label{eq: iter upper bound} 
\end{align}
where the initial condition $L_{1,m}(0) = 0, \forall m,$ and $U_{m}(0) = 1, \forall m \geq 2,$ and ``$\approx$" is used because we ignore diminishing terms (e.g., $\frac{\log N}{\sqrt{N}}$) in the equations \eqref{eq: iter head tail}-\eqref{eq: iter upper bound} for explaining the intuition. From\eqref{eq: iter head tail}-\eqref{eq: iter upper bound}, we can obtain an recursive equation for $L_{1,1}:$
\begin{align}
L_{1,1}(n+1) &\approx \min\{s_{1,1}^*, v_1 +\xi (L_{1,1}(n) - v_1) \}, ~ s_{1,1}^* = \lambda v_1  ~\text{and}~ 0 < \xi < 1.
\end{align} Therefore, as $n\rightarrow \infty,$ $L_{1,m}(n) \to s_{1,m}^*$ and $U_{m}(n) \to \sum_{r=2}^{m} s_{1,r}^*.$

To provide the intuition behind \eqref{eq: iter head tail}-\eqref{eq: iter upper bound}, we consider the mean-field model under JSQ as an example and  focus on $s_{1,m}$ in \eqref{mfm:s1}-\eqref{mfm:sm} by ignoring $s_{i,m}, \forall i \geq 2,$ that is,
\begin{align}
\dot s_{1,1} &= \lambda \mathbb I(s_1 < 1) - \mu_1 s_{1,1}, \label{mfm:s11-JSQ}\\
\dot s_{1,m} &=  p_{m-1}\mu_{m-1}s_{1,m-1} - \mu_m s_{1,m}, ~\forall m \geq 2, \label{mfm:s1m-JSQ} 
\end{align}
where the equilibrium can be verified to be $s_{1,m}^* = \lambda v_m, \forall m.$ 

We next carefully analyze  \eqref{mfm:s11-JSQ}-\eqref{mfm:s1m-JSQ} to establish \eqref{eq: iter head tail}-\eqref{eq: iter upper bound}. To derive the lower and upper bounds on the equilibrium point of a dynamical system $x(t)$ (or $s_{1,m}(t)$), we use the following straightforward ideas.
\begin{itemize}
    \item If
    \begin{align}
    \dot x(t) > L - x(t), \label{eq:lb}
    \end{align}
    then $x(t) \geq L$ when $t$ is sufficiently large because otherwise $x(t)$ continues to increase.  
    \item If
    \begin{align}
    \dot x(t) < U - x(t), \label{eq:ub}
    \end{align}
    then $x(t) \leq U$ when $t$ is sufficiently large because otherwise, $x(t)$ continues to decrease.
\end{itemize}
In the following, we will explain \eqref{eq: iter head tail}-\eqref{eq: iter upper bound} based on the ideas above. The explanation is not a rigorous proof. The detailed proof will be presented later.
We will ignore iteration index $n$ occasionally when confusion does not arise.

\textbf{The intuition to obtain \eqref{eq: iter head tail}:}  
We start with the dynamic of $s_{1,1}$ in \eqref{mfm:s11-JSQ}, which is
\begin{align}
\dot s_{1,1} &= \lambda \mathbb I(s_1 < 1) - \mu_1 s_{1,1}, \nonumber
\end{align}
Given $\sum_{m=2}^M s_{1,m} < U_M,$ we have $$\dot s_{1,1} = \lambda - \mu_1 s_{1,1}$$ when $s_{1,1} < 1-U_M,$ which implies $\dot s_{1,1} > 0$ when $s_{1,1} < \min\left\{1-U_M, s_{1,1}^*\right\}.$ Therefore, at the equilibrium point, $s_{1,1} \geq L_{1,1} \triangleq \min\{1-U_M, s_{1,1}^*\}$ because otherwise, $s_{1,1}$ will continue to increase because $\dot s_{1,1} > 0.$ 

\textbf{The intuition to obtain \eqref{eq: iter lower bound}:} Consider the dynamic of $s_{1,m}, \forall m \geq 2$ in \eqref{mfm:s1m-JSQ}: 
$$\dot s_{1,m} = p_{m-1}\mu_{m-1}s_{1,m-1} - \mu_m s_{1,m}, \forall m \geq 2.$$
Given $s_{1,m-1} \geq L_{1,m-1},$ we have $$\dot s_{1,m} \geq p_{m-1}\mu_{m-1}L_{1,m-1} - \mu_m s_{1,m}, \forall m \geq 2,$$
which implies at the equilibrium point, 
$$s_{1,m} \geq  L_{1,m} \triangleq \frac{p_{m-1}\mu_{m-1}}{\mu_m }L_{1,m-1} = \frac{v_m}{v_{m-1}} L_{1,m-1}$$ because otherwise, $s_{1,m}$ will continue to increase because $\dot s_{1,m} > 0.$

Note $\frac{v_m}{v_{m-1}} = \frac{s_{1,m}^*}{s_{1,m-1}^*},$ so we have $ \frac{L_{1,m}}{L_{1,m-1}}=\frac{s_{1,m}^*}{s_{1,m-1}^*},$ which means the ratio of the lower bounds is the same as that of the corresponding equilibrium points.      

\textbf{The intuition to obtain \eqref{eq: iter upper bound}:} We focus on the dynamic of $\sum_{r=2}^m s_{1,r}$ that 
\begin{align}
\sum_{r=2}^m \dot s_{1,r} &= p_1\mu_1s_{1,1} - \sum_{r=2}^{m-1} (1-p_r)\mu_r s_{1,r} - \mu_m s_{1,m} \nonumber \\
&\leq p_1\mu_1\left(1-\sum_{r=2}^{M} s_{1,r}\right) - \sum_{r=2}^{m-1} (1-p_r)\mu_r s_{1,r} - \mu_m s_{1,m} \nonumber \\
&= p_1\mu_1\left(1-\sum_{r=m+1}^{M} s_{1,r}\right) - \sum_{r=2}^{m-1} (1-p_r)\mu_r s_{1,r} - (p_1\mu_1 + \mu_m) \sum_{r=2}^{m}s_{1,r}  + \mu_m\sum_{r=2}^{m-1} s_{1,r}. \nonumber
\end{align}
Given $s_{1,m} \geq L_{1,m}, \forall m$ and $\sum_{r=2}^{m-1} s_{1,r} \leq U_{m-1},$ we have
\begin{align}
\sum_{r=2}^m \dot s_{1,r} \leq p_1\mu_1\left(1-\sum_{r=m+1}^{M} L_{1,r}\right) - \sum_{r=2}^{m-1} (1-p_r)\mu_r L_{1,r}- (p_1\mu_1 + \mu_m) \sum_{r=2}^{m}s_{1,r} + \mu_mU_{m-1}, \nonumber
\end{align}
which implies at the equilibrium point,
\begin{align}
\sum_{r=2}^{m}s_{1,r} \leq U_m  \triangleq \frac{p_1\mu_1\left(1-\sum_{r=m+1}^{M} L_{1,r}\right) - \sum_{r=2}^{m-1} (1-p_r)\mu_r L_{1,r} + \mu_mU_{m-1}}{p_1\mu_1+\mu_m} \nonumber 
\end{align}
because otherwise, $\sum_{r=2}^m  s_{1,r}$ will continue to decrease because $\sum_{r=2}^m \dot s_{1,r} < 0.$ 

By invoking $L_{1,m} = \frac{v_m}{v_{m-1}}L_{1,m-1},$ we have 
\begin{align}
\sum_{r=m+1}^{M} L_{1,r} &= \sum_{r=m+1}^M  \frac{v_r}{v_1}L_{1,1}, \nonumber\\
\sum_{r=2}^{m-1} (1-p_r)\mu_r L_{1,r} &= p_1\mu_1 L_{1,1} - \mu_m L_{1,m} = p_1\mu_1 L_{1,1} - \frac{\mu_m v_m}{v_1} L_{1,1}. \nonumber   
\end{align}
Therefore, we have
\begin{align}
U_m &= \frac{p_1\mu_1}{p_1\mu_1+\mu_m} - \frac{p_1\mu_1}{p_1\mu_1+\mu_m} \left(1+\sum_{r=m+1}^{M} \frac{v_m}{v_1}\right) L_{1,1} + \frac{\mu_m}{p_1\mu_1+\mu_m}\frac{v_m}{v_1} L_{1,1} + \frac{\mu_m}{p_1\mu_1+\mu_m}U_{m-1} \nonumber \\
&= \frac{p_1\mu_1}{p_1\mu_1+\mu_m} - \left(\frac{p_1\mu_1}{p_1\mu_1+\mu_m} \left(1+\sum_{r=m+1}^{M} \frac{v_m}{v_1}\right) - \frac{\mu_m}{p_1\mu_1+\mu_m}\frac{v_m}{v_1}\right)L_{1,1} + \frac{\mu_m}{p_1\mu_1+\mu_m}U_{m-1} \nonumber\\
&= 1-a_m -b_m L_{1,1} + a_m U_{m-1}. \nonumber
\end{align}
where the last equality holds by the definitions of $a_m$ and $b_m.$

\subsection{ISSP: An illustrative example}
To demonstrate ISSP, we consider JSQ  with Erlang-$3$ distribution and no buffer, i.e., Erlang-$3$ with $b = 1.$ 

\textbf{The mean-field model under JSQ with Erlang-$3$ and $b = 1.$}

With the Erlang-$3$ service time distribution, we have $$p_1=p_2=p_3=1\quad\hbox{and}\quad\mu_1=\mu_2=\mu_3=3.$$
So the MFM in this case is
\begin{align}
\dot s_{1,1} &= \lambda \mathbb I_{\{s_1 < 1\}} - 3 s_{1,1} \nonumber \\ 
\dot s_{1,2} &=  3 s_{1,1} - 3 s_{1,2}\nonumber \\
\dot s_{1,3} &= 3 s_{1,2} - 3 s_{1,3}  \nonumber \\
s_{i,m} &\equiv 0, \forall i \geq 2, \forall m. \nonumber
\end{align} 

\textbf{ISSP for JSQ with Erlang-$3$ and $b = 1.$} 
The values of the key parameters in iterative equations in this case are $a_1 = a_2 = a_3 = \frac{1}{2},$ $b_1 = 1, b_2 = \frac{1}{2}, b_3 = 0,$ and  $v_1=v_2=v_3=\frac{1}{3}.$
The corresponding iterative equations are 
\begin{align}
    L_{1,1}(n+1) \approx \min &\left\{\frac{\lambda}{3}, 1  - U_3(n) \right\}  \nonumber \\
    L_{1,2}(n+1) \approx L_{1,1}(n+1),& \quad \hbox{ and }~ L_{1,3}(n+1) \approx L_{1,2}(n+1) \nonumber\\
    U_2(n+1) \approx \frac{1}{2} - \frac{1}{2}L_{1,1}(n+1),& \quad \hbox{ and } ~ U_3(n+1) \approx \frac{1}{2} + \frac{1}{2}U_2(n+1) \nonumber
\end{align}
The iterative relation in terms of $L_{1,1}(n)$ is
\begin{align}
    L_{1,1}(n+1) =  \min \left\{\frac{\lambda}{3}, \frac{1}{4}+\frac{1}{4}L_{1,1}(n)\right\}\nonumber
\end{align}
which implies that $L_{1,1}(n) \to \frac{\lambda}{3}$ as $n\rightarrow\infty,$ and 
\begin{align*}
L_{1,1}(n) \to \frac{\lambda}{3}, L_{1,2}(n) \to \frac{\lambda}{3}, L_{1,3}(n) \to \frac{\lambda}{3},
U_2(n) \to \frac{\lambda}{3}, U_3(n) \to \frac{2\lambda}{3}. 
\end{align*}
The iterative procedure has been visualized in Figure \ref{fig:erlang 3}. In each iteration $n,$ we first establish $L_{1,1}(n+1)$ in light red based on $U_3(n)$ from the last iteration; then obtain $L_{1,1}(n+1)\approx L_{1,2}(n+1)\approx L_{1,3}(n+1)$ in light (blue, green); and finally refine $U_2(n+1)$ and $U_3(n+1)$ in light purple, which in turn will improve $L_{1,1}$ in the next iteration. 
\begin{figure}
  \centering
  \includegraphics[width=5.3in]{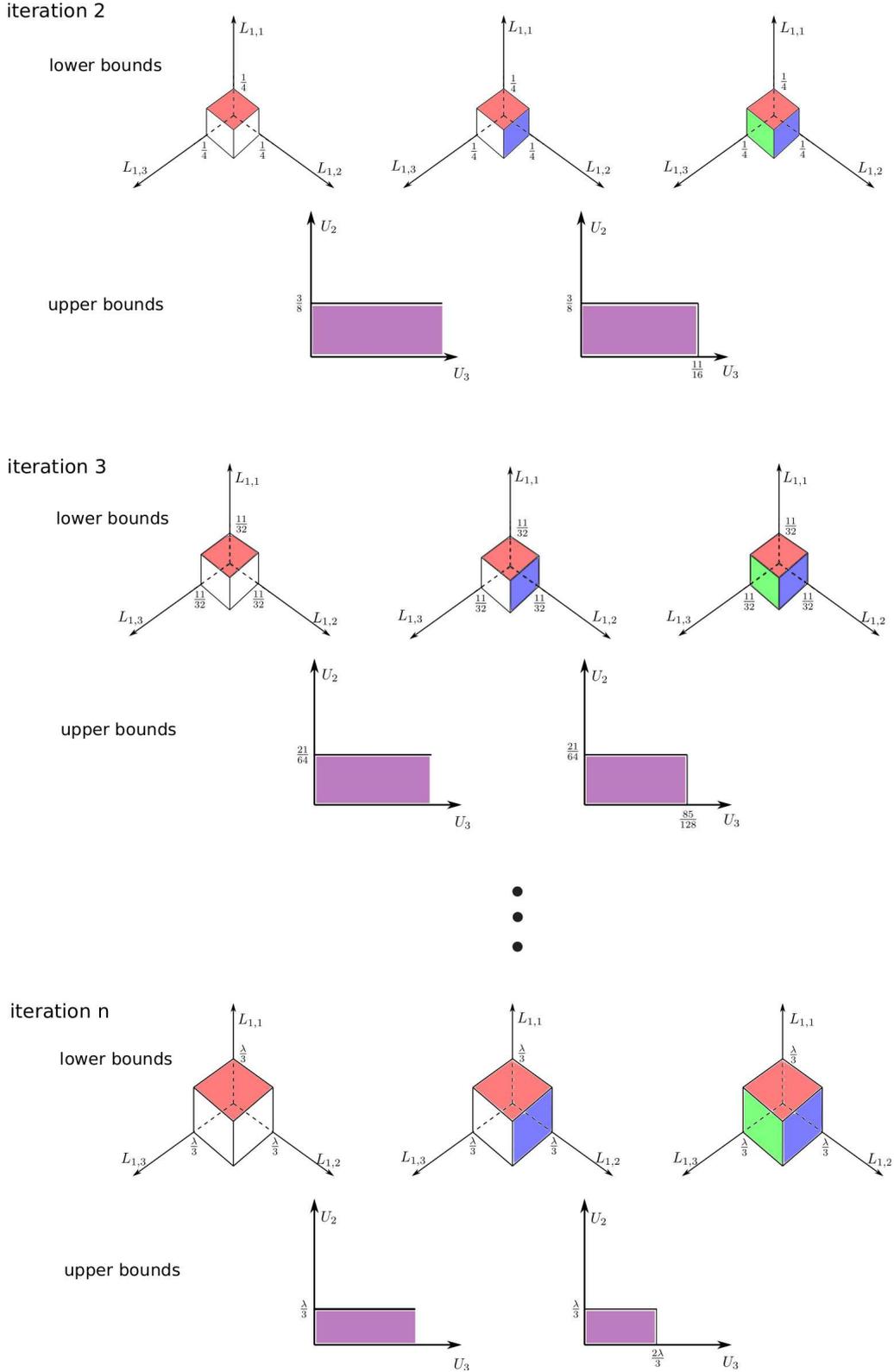}
  \caption{~Illustrations of ISSP  under Erlang-$3$ service time distribution. The lower bounds of $s_{1,m}, \forall m,$ keep increasing and the upper bounds of $s_{1,2}$ and $s_{1,2}+s_{1,3}$ keep decreasing until reaching the equilibriums: the initial values (at iteration $1$) are $L_{1,m}(1)=0,\forall m, U_2(1)=1,U_3(1)=1;$ the lower bounds $L_{1,m}$ increase as $0 \to \frac{1}{4}\to \frac{11}{32} \to \cdots \to \frac{\lambda}{3}$; the upper bound $U_2$ of $s_{1,2}$ decreases as $1 \to \frac{3}{8} \to \frac{21}{64} \to \cdots \to \frac{\lambda}{3};$ the upper bound $U_3$ of $s_{1,2}+s_{1,3}$ decreases as $1 \to \frac{11}{16} \to \frac{85}{128} \to \cdots \to \frac{2\lambda}{3}.$ 
  \label{fig:erlang 3}}
\end{figure}

In the following sections, we formalize the ISSP, which is then combined with Stein's method to prove Theorems \ref{Thm:equilibrium} and \ref{Thm:main}. A roadmap can be found in Figure \ref{fig:roadmap} that demonstrates the relationship of the key lemmas and theorems.
\begin{figure}[H]
  \centering
  \includegraphics[width=5.3in]{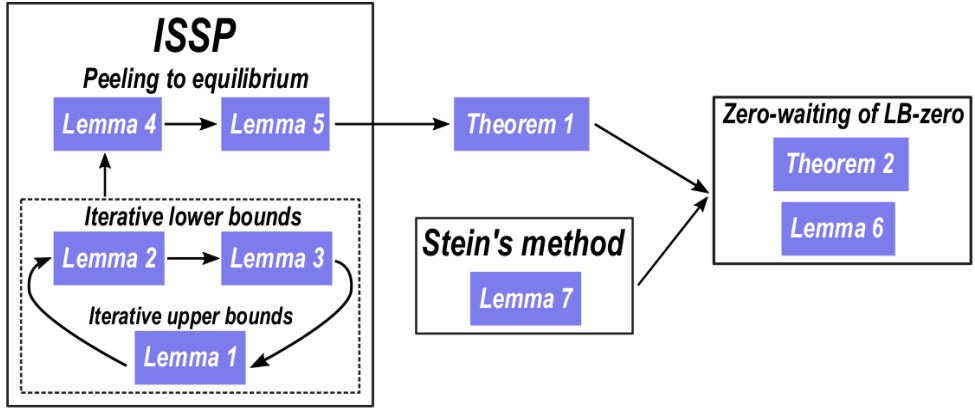}
  \caption{~A Roadmap for Proving Theorems \ref{Thm:equilibrium} and \ref{Thm:main}: Lemmas \ref{lem: iter head tail}, \ref{lem: iter prob lower bound}, and \ref{lem: iter prob upper bound}  establish the iterative upper and lower bounds, which are used to establish Lemma \ref{lem: L11 lift}, \ref{lem: S11 lower bound} and Theorem \ref{Thm:equilibrium}. Together with Stein's method in Lemma \ref{lem:gen diff},  zero-waiting is established in  Lemma \ref{lem:main} and Theorem \ref{Thm:main}.}
  \label{fig:roadmap}
\end{figure}

\section{Proof of Theorem \ThmmainA~ based on ISSP}\label{sec: prove thm 1}
In this section, we present the formal statements of the iterative equations \eqref{eq: iter head tail}-\eqref{eq: iter upper bound} as individual lemmas, which are used to prove Theorem \ThmmainA. 

Recall that the positive constant $C = \sqrt{\frac{2\bar v^2\log (1/\xi)}{3M + (3M+4)\log (1/\xi)}},$ whose value is chosen to control the tail probability as we will see it later. Recall $a_m = \frac{\mu_m}{p_1\mu_1 + \mu_m},$ $b_m = (1-a_m)\left(1 + \sum_{r=m+1}^M \frac{v_{r}}{v_{1}}\right) - \frac{a_m v_m}{v_1},$ and $\bar v = \min_m v_m.$ Recall $c_m = 5(1-a_m)\sum_{r=m+1}^M (r-1)v_r + 5a_m \sum_{r=2}^{m-1} \frac{\mu_r v_r}{\mu_m} + 5(m-2)a_m v_m + 5 - a_m,$ $C_M = \sum_{m=2}^{M} c_m\prod_{j=m+1}^M a_j,$ and $\Delta = \frac{\log N}{\sqrt{N}}.$ We further define 
\begin{align*}
\epsilon_{1}(n+1)=& e^{-\frac{\log^2 N}{C^2}} +\left(\frac{C}{\Delta}+1\right)\sigma_{M}(n)\\ \epsilon_{m}(n+1)=& e^{-\frac{v_m^2\log^2 N}{C^2}} + \left( \frac{C}{v_m\Delta}+1\right)\epsilon_{m-1}(n+1)\\ \sigma_{m}(n+1)=&e^{-\frac{\log^2 N}{C^2}} +\left(\frac{C}{\Delta}+1\right) \left(\sigma_{m-1}(n+1) + \sum_{m=1}^M \epsilon_m(n+1)\right)
\end{align*}
with initial values $\epsilon_m(0) = \sigma_m(0) =0, \forall m.$ These are the constants used  in the tail probabilities in the following lemmas. We only state the lemmas and their proofs can be found in Appendix \ref{App:IterSSC}.

\subsection{A lower bound on $L_{1,1}(n+1)$ given $\sum_{m=2}^{M} S_{1,m} \leq U_{M}(n).$}
The following lemma is the rigorous statement of \eqref{eq: iter head tail}. 
\begin{restatable}{lemma}{iterheadtail}\label{lem: iter head tail}
Given $$\mathbb P\left(\sum_{m=2}^{M} S_{1,m} > U_{M}(n)\right) \leq \sigma_{M}(n),$$ and defining
\begin{align*}
L_{1,1}(n+1) =& \min\left\{s_{1,1}^* - \frac{6\Delta}{C}, 1 - U_{M}(n) - \frac{1-\xi}{2\mu_1N^\alpha} -\frac{6\Delta}{C} \right\}, 
\end{align*}
we have $$\mathbb P\left(S_{1,1} < L_{1,1}(n+1)\right) \leq \epsilon_{1}(n+1).$$ \qed
\end{restatable}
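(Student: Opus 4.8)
The plan is to make rigorous the heuristic behind \eqref{eq: iter head tail}, where the scalar ODE argument ``$\dot s_{1,1} = \lambda - \mu_1 s_{1,1} > 0$ whenever $s_{1,1} < \min\{1-U_M, s^*_{1,1}\}$'' gets replaced by a Lyapunov-drift tail bound in the style of \cite{BerGamTsi_01}. First I would condition on the high-probability event $\mathcal E_n := \{\sum_{m=2}^M S_{1,m} \le U_M(n)\}$, which by hypothesis fails with probability at most $\sigma_M(n)$; on this event, whenever the system state $s$ has $s_1 = \sum_m s_{1,m}$ strictly below $1$ it must be that $s_{1,1} \le 1 - U_M(n)$ is \emph{not} automatic — rather, $s_1 < 1$ forces routing to an idle server (by the LB-zero property $A_1(s) \le 1/\sqrt N$ when $s_1 \le 1 - 1/(N^\alpha \log N)$), so the generator \eqref{G-Si-A}–\eqref{G-Si-change} applied to $f(s) = s_{1,1}$ gives a drift of the form $\lambda(1 - o(1)) - \mu_1 s_{1,1}$ minus the departure/phase-transition contributions, plus inflow $\sum_{m\ge 2}(1-p_m)\mu_m s_{2,m}$, which is nonnegative. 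The key inequality to extract is: there is a region of states — those with $s_{1,1}$ well below the target $L_{1,1}(n+1)$ and with $\mathcal E_n$ holding — on which $G\, s_{1,1} \ge \delta > 0$ for an explicit constant $\delta$ of order $\mu_1(1-\xi)/N^\alpha$ (this is exactly where the slack terms $-\frac{1-\xi}{2\mu_1 N^\alpha}$ and $-\frac{6\Delta}{C}$ in the definition of $L_{1,1}(n+1)$ come from: the $1/N^\alpha$ accounts for $\lambda = 1 - N^{-\alpha}$ and the $\mathcal E_n$-slack, the $\Delta = \log N/\sqrt N$ accounts for martingale fluctuations).

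Second, I would set up the Lyapunov function $V(s) = (L_{1,1}(n+1) - s_{1,1})^+$ (or a smoothed/exponential version, e.g. $e^{-\eta s_{1,1}}$) and invoke the tail bound from \cite{BerGamTsi_01}: since the jumps of $s_{1,1}$ are bounded by $1/N$ and the drift of $V$ is bounded above by $-\delta/2$ on the set $\{V > 0\} \cap \mathcal E_n$ while the ``bad set'' $\mathcal E_n^c$ is entered with controlled frequency, the stationary probability that $V$ exceeds a threshold decays geometrically. Calibrating the threshold to the $\Delta$-scale slack already built into $L_{1,1}(n+1)$ yields $\mathbb P(S_{1,1} < L_{1,1}(n+1)) \le $ (geometric tail term) $+$ (cost of the $\mathcal E_n^c$ correction). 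The geometric tail term is designed to be $e^{-\log^2 N / C^2}$ — this is why $C = \sqrt{2\bar v^2 \log(1/\xi)/(3M + (3M+4)\log(1/\xi))}$ appears: $C$ is precisely the ratio that makes the Hoeffding/Bernstein exponent in \cite{BerGamTsi_01}, with step size $1/N$ over $\sqrt N \cdot \Delta = \log N$ effective steps, collapse to $\log^2 N / C^2$. The $\mathcal E_n^c$-correction contributes the remaining piece, and bookkeeping the two together gives exactly $\epsilon_1(n+1) = e^{-\log^2 N/C^2} + (C/\Delta + 1)\sigma_M(n)$, where the factor $(C/\Delta + 1)$ is the number of ``regeneration windows'' over which a rare $\mathcal E_n^c$ visit can corrupt the drift estimate.

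The main obstacle I anticipate is the non-monotonicity flagged in the excerpt: the drift of $s_{1,1}$ is \emph{not} a clean function of $s_{1,1}$ alone — it couples to $s_{2,m}$ (the servers with a waiting job, via the departure term that feeds back into phase $1$) and to $\sum_{m\ge 2} s_{1,m}$ (which governs whether $s_1 < 1$). The conditioning on $\mathcal E_n$ handles the second coupling, but the feedback from $s_{2,m}$ must be shown to be a nonnegative (hence drift-helping, for a lower bound) or at worst $O(\Delta)$-small contribution; I expect one needs the companion fact that $\sum_m S_{2,m}$ is itself $o(1)$ with high probability (which is plausibly part of the same ISSP induction, or follows from a crude a-priori bound since total load is $\lambda < 1$). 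Getting the drift inequality to hold \emph{uniformly} over the relevant state region, with an explicit $\delta$ matching the slack constants, and then threading the \cite{BerGamTsi_01} constants so the final tail is literally $\epsilon_1(n+1)$ rather than merely $O(\epsilon_1(n+1))$, is where the real work lies; the rest is the standard drift-to-tail machinery already used for state-space collapse in \cite{ErySri_12,MagSri_16}.
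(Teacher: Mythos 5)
Your proposal matches the paper's proof in all essentials: the Lyapunov function is the linear function $V(s)=\tilde L_{1,1}-s_{1,1}$ (your truncated version is equivalent for the tail-bound framework), the good set is $\mathcal E=\{\sum_{m\geq 2}s_{1,m}\leq U_M(n)\}$, the negative drift on $\{V\geq B\}\cap\mathcal E$ follows from the LB-zero property $A_1(s)\leq 1/\sqrt N$ together with simply dropping the nonnegative $\sum_m(1-p_m)\mu_m s_{2,m}$ inflow term (no separate bound on $S_{2,m}$ is needed, so your hedge there is unnecessary), and the conditional tail bound of \cite{BerGamTsi_01}/\cite{WanMagSri_21} yields exactly $e^{-\log^2N/C^2}+(C/\Delta+1)\sigma_M(n)$. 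One small calibration point: the working drift constant in the paper is $\gamma=\mu_1\Delta/C$ coming from the threshold $B=2\Delta/C$ (with the remaining $4\Delta/C$ of the $6\Delta/C$ slack consumed by the range $2\nu_{\max}j$ of the tail bound), while the $\frac{1-\xi}{2\mu_1 N^\alpha}$ slack serves the separate purpose of guaranteeing $s_1\leq 1-\frac{1}{N^\alpha\log N}$ so that $A_1\leq 1/\sqrt N$ applies; your stated order $\mu_1(1-\xi)/N^\alpha$ for $\delta$ conflates these two roles, but this does not affect the correctness of the plan.
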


\subsection{A lower bound on $S_{1,m}$ given $S_{1,m-1} \geq L_{1, m-1}.$}
The following lemma is the rigorous statement of  \eqref{eq: iter lower bound}.

\begin{restatable}{lemma}{iterproblowerbound}\label{lem: iter prob lower bound}
Consider $m\geq 2.$ Given $$\mathbb P\left(S_{1,m-1} < L_{1,m-1}(n+1)\right)\leq \epsilon_{m-1}(n+1),$$
and defining $$L_{1,m}(n+1) = \frac{v_m}{v_{m-1}}L_{1,m-1}(n+1)-\frac{5v_m}{C}\Delta,$$
we have $$\mathbb P\left(S_{1,m} < L_{1,m}(n+1)\right) \leq \epsilon_{m}(n+1).$$ \qed
\end{restatable}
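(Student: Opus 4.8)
The plan is to run one Lyapunov/drift argument on the steady-state law of $S_{1,m}$ using the generator $G$ in \eqref{G-Si-A}--\eqref{G-Si-change}, together with the hypothesis that $S_{1,m-1}\ge L_{1,m-1}(n+1)$ with probability at least $1-\epsilon_{m-1}(n+1)$. First I would compute $GS_{1,m}(s)$ for $m\ge 2$. An arrival never changes $s_{1,m}$ when $m\ge 2$: a job routed to an idle server becomes a phase-$1$ job, and a job routed to a busy server does not change the phase of the job in service, so only Events $2$ and $3$ contribute and
$$GS_{1,m}(s)\;=\;p_{m-1}\mu_{m-1}\,s_{1,m-1}-\mu_m\,s_{1,m}\;=\;\mu_m\!\left(\frac{v_m}{v_{m-1}}s_{1,m-1}-s_{1,m}\right),$$
where the last equality uses $p_{m-1}\mu_{m-1}=\mu_m v_m/v_{m-1}$, immediate from $v_m=\bigl(\prod_{i=1}^{m-1}p_i\bigr)/\mu_m$. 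This is the stochastic analogue of the mean-field identity behind \eqref{eq: iter lower bound}: on the ``good'' event $\mathcal B=\{s:\,s_{1,m-1}\ge L_{1,m-1}(n+1)\}$ the drift of $S_{1,m}$ is at least $\mu_m\bigl(\tfrac{v_m}{v_{m-1}}L_{1,m-1}(n+1)-s_{1,m}\bigr)$, hence strictly positive whenever $s_{1,m}<\tfrac{v_m}{v_{m-1}}L_{1,m-1}(n+1)$.

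Next I would introduce the shortfall $V(s)=\bigl(\tfrac{v_m}{v_{m-1}}L_{1,m-1}(n+1)-s_{1,m}\bigr)^+$. It is bounded (by $v_m/v_{m-1}$), every transition changes it by at most $1/N$, and the total rate of transitions that move $S_{1,m}$ is $O\bigl((\mu_{m-1}+\mu_m)N\bigr)$. On $\mathcal B$ one has $GV(s)\le -\mu_m V(s)+O(1/N)$ whenever $V(s)>0$ — a state-proportional negative drift — while on $\mathcal B^{c}$ one only has the crude bound $|GV(s)|=O(\mu_{m-1}+\mu_m)$, but $\mathbb P(S\in\mathcal B^{c})\le\epsilon_{m-1}(n+1)$. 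I would then feed this into the tail bound of \cite{BerGamTsi_01} in the ``with exceptions'' form used throughout ISSP: an order-$1$ negative drift on all of $\{V>0\}$ except on a set of stationary probability $\le\epsilon_{m-1}(n+1)$, together with jumps of size $1/N$ and total rate $O(N)$, forces the stationary law of $V$ to concentrate near $0$ with a Gaussian-type tail. Evaluating that tail at $x=\tfrac{5v_m}{C}\Delta$ — precisely the scale at which $C=\sqrt{2\bar v^{2}\log(1/\xi)/(3M+(3M+4)\log(1/\xi))}$ is calibrated so the resulting exponent is $v_m^{2}\log^{2}N/C^{2}$ — yields
$$\mathbb P\!\left(V(S)>\tfrac{5v_m}{C}\Delta\right)\;\le\;e^{-\frac{v_m^{2}\log^{2}N}{C^{2}}}+\Bigl(\tfrac{C}{v_m\Delta}+1\Bigr)\epsilon_{m-1}(n+1)\;=\;\epsilon_m(n+1),$$
the amplification factor $\tfrac{C}{v_m\Delta}+1$ being exactly the price of converting the $\epsilon_{m-1}$-probability bad set, which enters the stationary balance $\mathbb E[GV(S)]=0$ at order $\epsilon_{m-1}$, into a tail bound for $V$ at the small level $x=\tfrac{5v_m}{C}\Delta$ via a Markov-type inequality whose denominator is $x$. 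Finally, $\{V(S)>\tfrac{5v_m}{C}\Delta\}=\{S_{1,m}<\tfrac{v_m}{v_{m-1}}L_{1,m-1}(n+1)-\tfrac{5v_m}{C}\Delta\}=\{S_{1,m}<L_{1,m}(n+1)\}$, which is the claim.

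The conceptual content — a positive drift pushes $S_{1,m}$ back up as soon as it falls below $\tfrac{v_m}{v_{m-1}}L_{1,m-1}$ — is short; the real work, and the main obstacle, is the ``peeling'' bookkeeping. The favorable drift holds only on the event $\mathcal B$, not on a genuine sublevel set of a state-space function, so a textbook drift lemma does not apply directly: one must track the contribution of $\mathcal B^{c}$ to $\mathbb E[GV(S)]=0$ explicitly, and dividing it by the tiny gap $\tfrac{5v_m}{C}\Delta$ is what produces the amplified term $\bigl(\tfrac{C}{v_m\Delta}+1\bigr)\epsilon_{m-1}(n+1)$ and thereby closes the recursion defining $\epsilon_m(n)$. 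Pinning down the exact exponent $v_m^{2}\log^{2}N/C^{2}$ (rather than merely $\Theta(\log^{2}N)$) and the constant $5$ in the gap requires the precise constants in \cite{BerGamTsi_01} and the specific value of $C$; since $0<\xi<1$ and $\bar v=\min_m v_m>0$, this $C$ is well defined, positive and uniform in $m$, so the recursion is consistent across phases.
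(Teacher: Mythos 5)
Your proposal matches the paper's argument in all essentials: the paper also takes the shortfall Lyapunov function $V(s)=\tfrac{v_m}{v_{m-1}}L_{1,m-1}-s_{1,m}$, restricts the drift analysis to the exception set $\mathcal E=\{s_{1,m-1}\ge L_{1,m-1}\}$, proves $\nabla V(s)\le-\tfrac{\mu_m v_m}{C}\Delta$ there once $V(s)\ge\tfrac{v_m}{C}\Delta$ and $\nabla V(s)\le\mu_m$ elsewhere, and then invokes the conditional tail bound of Lemma~\ref{tail-bound-cond} (the \cite{WanMagSri_21} extension of \cite{BerGamTsi_01}) with $B=\tfrac{v_m}{C}\Delta$, $\gamma=\tfrac{\mu_m v_m}{C}\Delta$, $\delta=\mu_m$, $q_{\max}=\mu_m N$, $j=\tfrac{2v_m\sqrt N\log N}{C}$ to obtain exactly $e^{-v_m^2\log^2N/C^2}+(\tfrac{C}{v_m\Delta}+1)\epsilon_{m-1}(n+1)$. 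Two very minor imprecisions in your sketch: the amplification factor is $\delta/\gamma+1$ (drift off $\mathcal E$ over drift on $\mathcal E$), not literally ``the bad-set probability divided by the gap $\tfrac{5v_m}{C}\Delta$''; and the relevant rate in Lemma~\ref{tail-bound-cond} is $q_{\max}=\mu_m N$ (only transitions that \emph{increase} $V$, i.e.\ phase-$m$ completions), not $O((\mu_{m-1}+\mu_m)N)$ — but neither changes the argument.
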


\subsection{An upper bound on $\sum_{r=2}^{m} S_{1,r}$ given $\sum_{r=2}^{m-1} S_{1,r} \leq U_{m-1}$ and $S_{1,m-1} \geq L_{1,m-1}.$}
The following lemma is the rigorous statement of \eqref{eq: iter upper bound}.
\begin{restatable}{lemma}{iterprobupperbound}\label{lem: iter prob upper bound}
Consider $m\geq 2.$ Given 
\begin{align}
&\mathbb P\left(\sum_{r=2}^{m-1} S_{1,r} \geq U_{m-1}(n+1)\right) \leq \sigma_{m-1}(n+1) \nonumber\\
&\mathbb P\left(S_{1,r} < L_{1,r}(n+1)\right)\leq \epsilon_{r}(n+1) \quad 1\leq r\leq M\nonumber
\end{align}
and defining $$U_{m}(n+1) = 1 - a_m - b_m L_{1,1}(n+1) + a_mU_{m-1}(n+1) + \frac{c_m}{C}\Delta,$$ 
we have $$\mathbb P\left(\sum_{r=2}^m S_{1,r} \geq U_{m}(n+1)\right) \leq \sigma_{m}(n+1).$$ \qed
\end{restatable}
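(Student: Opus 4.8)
The plan is to establish a drift-based tail bound for the quantity $\Phi := \sum_{r=2}^{m} S_{1,r}$, mimicking the informal mean-field argument that produced \eqref{eq: iter upper bound}. First I would compute the generator $G$ applied to the test function $f(s) = \Phi = \sum_{r=2}^{m} s_{1,r}$ using \eqref{G-Si-A}--\eqref{G-Si-change}. Only transitions affecting $s_{1,r}$ for $2 \le r \le m$ contribute: arrivals routed to an empty-or-busy server in phases $2,\dots,m$ (rate terms $\lambda N(A_{0,r}-A_{1,r})$), phase completions $r\to r+1$ for $r \le m$, phase-$(m{-}1)\to m$ entries coming in, and departures from phase $r$ for $2\le r\le m$. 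Collecting these, $G f(s)$ should reproduce the deterministic right-hand side from the excerpt: $p_1\mu_1 s_{1,1} - \sum_{r=2}^{m-1}(1-p_r)\mu_r s_{1,r} - \mu_m s_{1,m}$, up to the arrival-bias terms involving $A_{1,r}(s)$, which are nonnegative and thus only help the upper bound (an incoming job going to a \emph{busy} phase-$r$ server does not increase $s_{1,r}$). So on the ``good'' event where $S_{1,r}\ge L_{1,r}(n+1)$ for all $r$ and $\sum_{r=2}^{m-1}S_{1,r}\le U_{m-1}(n+1)$, the algebra already carried out in the excerpt gives
\[
G f(s) \le (p_1\mu_1 + \mu_m)\bigl(U_m^{\circ} - \Phi\bigr) + (\text{error terms } O(\Delta)),
\]
where $U_m^{\circ} = 1 - a_m - b_m L_{1,1}(n+1) + a_m U_{m-1}(n+1)$ is the idealized fixed-point value and the $c_m\Delta/C$ slack in the definition of $U_m(n+1)$ is exactly what absorbs these error terms plus the discretization gaps from the $\epsilon_r$, $\sigma_{m-1}$ bad events.

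The second step is to convert this conditional drift inequality into a steady-state tail bound on $\Phi$. This is where I would invoke the tail bound of \cite{BerGamTsi_01} (the tool the authors flag in the introduction): if a nonnegative functional has negative drift of order $-\delta$ whenever it exceeds a threshold, and jumps are bounded by $O(1/N)$ with bounded rates, then its steady-state distribution has exponentially decaying tails above that threshold, with decay rate governed by $\delta N$ versus the jump size and total rate. Here the threshold is $U_m^{\circ} + (\text{slack})$, the drift coefficient is $p_1\mu_1+\mu_m$, jumps are $\le b/N$ in magnitude with total rate $O(N)$, and the target probability $e^{-\log^2 N/C^2}$ forces the precise choice of $C = \sqrt{2\bar v^2 \log(1/\xi)/(3M+(3M+4)\log(1/\xi))}$ — this is why $C$ is calibrated the way it is. The bad events (where the conditioning fails) contribute additively: by a union bound the total failure probability is at most $e^{-\log^2 N/C^2} + (C/\Delta + 1)(\sigma_{m-1}(n+1) + \sum_{r=1}^M \epsilon_r(n+1))$, which is exactly the recursive definition of $\sigma_m(n+1)$. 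The factor $(C/\Delta+1)$ appears because the tail bound must be applied on a shifted functional that lumps in the indicator of the bad set, inflating the effective jump/threshold ratio by that amount.

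The main obstacle I anticipate is handling the arrival terms $A_{1,r}(s)$ rigorously rather than just bounding them by zero. Since we are working at \emph{any} state $s$ (not only states satisfying $s_1 \le 1 - 1/(N^\alpha \log N)$), the LB-zero guarantee $A_1(s)\le 1/\sqrt N$ need not hold, so one genuinely has to argue that $A_{1,r}(s)(f(s+e_{1,r})-f(s))$ has the right sign: an arrival to a server already holding $\ge 1$ job in phase $r$ increases $s_{2,r},\dots$ but \emph{not} $s_{1,r}$, hence contributes $0$ to $Gf$; an arrival to an idle server entering phase $1$ affects $s_{1,1}$, not $\Phi$. So actually $\Phi$ only gains mass through the phase-transition term $p_{m-1}\mu_{m-1}s_{1,m-1}$ and loses it through completions — which is precisely the structure exploited in the excerpt. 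The delicate point is bookkeeping the $O(\Delta)$ error terms so they sum to exactly $c_m\Delta/C$: this requires expanding $\sum_{r=m+1}^M L_{1,r} = \sum_{r=m+1}^M (v_r/v_1) L_{1,1} - O(\Delta)$ and $\sum_{r=2}^{m-1}(1-p_r)\mu_r L_{1,r}$ using Lemma~\ref{lem: iter prob lower bound}'s recursive $-5v_r\Delta/C$ corrections, and tracking the coefficients $5(1-a_m)\sum_{r>m}(r-1)v_r$, $5a_m\sum_{r=2}^{m-1}\mu_r v_r/\mu_m$, etc. that make up $c_m$. I would do this coefficient-matching carefully but mechanically, and treat the \cite{BerGamTsi_01} application as the conceptual crux.
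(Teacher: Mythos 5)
Your proposal matches the paper's argument: the paper builds the Lyapunov function $V(s)=\sum_{r=2}^{m}s_{1,r}-B_m$, shows $\nabla V(s)\leq -p_1\mu_1\Delta/C$ on the good set $\mathcal E=\{s\mid \sum_{r=2}^{m-1}s_{1,r}\leq U_{m-1},\ s_{1,r}\geq L_{1,r}\,\forall r\}$ using exactly the drift identity $p_1\mu_1 s_{1,1}-\sum_{r=2}^{m-1}(1-p_r)\mu_r s_{1,r}-\mu_m s_{1,m}$ (arrivals drop out, as you observe), bounds $s_{1,1}$ via $s_1\leq 1$, and invokes the conditional Lyapunov tail bound (Lemma~\ref{tail-bound-cond}, the \cite{WanMagSri_21} extension of \cite{BerGamTsi_01}) before coefficient-matching to obtain $c_m$. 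Two small prose-level corrections that wouldn't derail the execution: the factor $C/\Delta+1$ is just $\beta=\delta/\gamma+1$ from that tail lemma (with $\delta=p_1\mu_1$, $\gamma=p_1\mu_1\Delta/C$), not an inflated jump/threshold ratio from a ``shifted functional''; and the $c_m\Delta/C$ slack absorbs only the deterministic $L_{1,r}$ corrections and the $B+2\nu_{\max}j$ offset, while the bad-event probabilities go entirely into $\sigma_m(n+1)$ rather than into $U_m(n+1)$.
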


\subsection{Convergence of $L_{1,1}(n).$}
Based on Lemmas \ref{lem: iter head tail}, \ref{lem: iter prob lower bound}, and \ref{lem: iter prob upper bound}, we will show $\{L_{1,1}(n)\}_n$ is an increasing sequence and approaches $s_{1,1}^*.$  
\begin{restatable}{lemma}{loneonelift}\label{lem: L11 lift}
Recall that $\xi=\sum_{m=2}^{M} b_m \prod_{j=m+1}^M a_j.$ Given $\mathbb P(S_{1,1} < L_{1,1}(n)) \leq \epsilon_1(n)$ and $L_{1,1}(n) \leq s_{1,1}^* - \frac{6\Delta}{C},$ we have $$\mathbb P(S_{1,1} < L_{1,1}(n+1)) \leq \epsilon_1(n+1),$$
where  $$L_{1,1}(n+1) = \frac{1}{\mu_1}-\frac{C_M\Delta}{C(1-\xi)} - \frac{1}{2\mu_1 N^\alpha} +  \xi\left(L_{1,1}(n) - \frac{1}{\mu_1}+\frac{C_M\Delta}{C(1-\xi)} + \frac{1}{2\mu_1 N^\alpha}\right)$$ and $\epsilon_1(n+1) = \epsilon_1(n) (M^2+2) (\frac{2C}{\bar v\Delta}+1)^{2M}.$ Furthermore, $L_{1,1}(n+1) > L_{1,1}(n)$ holds when $L_{1,1}(n) \leq s_{1,1}^* - \frac{6\Delta}{C}.$ \qed
\end{restatable}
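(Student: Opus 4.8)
The plan is to prove Lemma~\ref{lem: L11 lift} by composing the three preceding iterative lemmas (Lemmas~\ref{lem: iter head tail}, \ref{lem: iter prob lower bound}, \ref{lem: iter prob upper bound}) within a single ISSP step and then unrolling the telescoping that defines $\xi$. Concretely, fix an iteration $n$ with $\mathbb P(S_{1,1} < L_{1,1}(n)) \le \epsilon_1(n)$ and $L_{1,1}(n) \le s_{1,1}^* - 6\Delta/C$. First I would apply Lemma~\ref{lem: iter prob lower bound} repeatedly, $m = 2, \dots, M$, to turn the lower bound on $S_{1,1}$ into lower bounds $L_{1,m}(n+1) = \frac{v_m}{v_{m-1}} L_{1,m-1}(n+1) - \frac{5v_m}{C}\Delta$ on each $S_{1,m}$, with failure probabilities $\epsilon_m(n+1)$; note the leading term telescopes to $\frac{v_m}{v_1} L_{1,1}(n+1)$ plus an $O(\Delta)$ correction. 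Then I would feed these into Lemma~\ref{lem: iter prob upper bound}, again iterating $m = 2, \dots, M$, to obtain the upper bounds $U_m(n+1) = 1 - a_m - b_m L_{1,1}(n+1) + a_m U_{m-1}(n+1) + \frac{c_m}{C}\Delta$, with failure probabilities $\sigma_m(n+1)$. Finally I would plug $U_M(n+1)$ — which, after unrolling the $a_m$-recursion, equals $1 - \sum_{m=2}^M (b_m \prod_{j=m+1}^M a_j) L_{1,1}(n+1) - \sum_{m=2}^M \big(\text{const}_m\big)\prod_{j=m+1}^M a_j + \frac{C_M}{C}\Delta$ — into Lemma~\ref{lem: iter head tail} to bound $S_{1,1}$ from below at iteration $n+2$; but since the statement of Lemma~\ref{lem: L11 lift} is about $L_{1,1}(n+1)$, the actual composition is: the hypothesis on $S_{1,1}$ at step $n$ gives (via the lower-bound chain, upper-bound chain, and head-tail lemma, all applied at index $n+1$) the claimed formula for $L_{1,1}(n+1)$.

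The key algebraic step is identifying the coefficient of $L_{1,1}$ in the unrolled expression for $1 - U_M$ as exactly $\xi = \sum_{m=2}^M b_m \prod_{j=m+1}^M a_j$, and the constant term as $\frac{1}{\mu_1}$ (using $s_{1,1}^* = \lambda v_1 = \lambda/\mu_1$ and the normalization $\sum_m v_m = 1$), with the aggregated $\Delta$-error collecting into $\frac{C_M}{C(1-\xi)}\Delta$ after the geometric-series bookkeeping implicit in passing through Lemma~\ref{lem: iter head tail}'s $\min\{s_{1,1}^*, 1 - U_M - \cdots\}$. Since we assume $L_{1,1}(n) \le s_{1,1}^* - 6\Delta/C$, one checks that $1 - U_M(n) - \frac{1-\xi}{2\mu_1 N^\alpha} - \frac{6\Delta}{C}$ is the active branch of the $\min$ (this needs $\xi < 1$ and the first-phase load being the bottleneck, i.e. $1 - U_M(n+1) \le s_{1,1}^*$ along the relevant range), so $L_{1,1}(n+1)$ takes the stated affine-in-$L_{1,1}(n)$ form. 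The probability accounting is a union bound over the $M-1$ applications of each of the two chained lemmas plus the head-tail lemma; tracking the multiplicative blow-up of the $\epsilon$'s and $\sigma$'s through the recursive definitions of $\epsilon_m(n+1)$ and $\sigma_m(n+1)$ (each step multiplies by roughly $\frac{C}{\bar v \Delta}+1$) yields the crude bound $\epsilon_1(n+1) = \epsilon_1(n)(M^2+2)(\frac{2C}{\bar v\Delta}+1)^{2M}$, which I would obtain by bounding every $v_m \ge \bar v$ and every $a_m, \prod a_j \le 1$ and counting at most $2M$ nested factors and $M^2+2$ additive terms.

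For the monotonicity claim $L_{1,1}(n+1) > L_{1,1}(n)$ when $L_{1,1}(n) \le s_{1,1}^* - 6\Delta/C$: writing $f(x) = \frac{1}{\mu_1} - \frac{C_M\Delta}{C(1-\xi)} - \frac{1}{2\mu_1 N^\alpha} + \xi(x - \frac{1}{\mu_1} + \frac{C_M\Delta}{C(1-\xi)} + \frac{1}{2\mu_1 N^\alpha})$, we have $L_{1,1}(n+1) = f(L_{1,1}(n))$, and $f$ is an increasing affine map with slope $\xi \in (0,1)$ whose unique fixed point is $x^* = \frac{1}{\mu_1} - \frac{C_M\Delta}{C(1-\xi)} - \frac{1}{2\mu_1 N^\alpha}$. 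So $f(x) > x$ exactly when $x < x^*$, and it remains to verify that $L_{1,1}(n) \le s_{1,1}^* - 6\Delta/C = \frac{1}{\mu_1} - \frac{1}{\mu_1 N^\alpha} - \frac{6\Delta}{C}$ implies $L_{1,1}(n) < x^*$; this reduces to the numerical inequality $\frac{1}{2\mu_1 N^\alpha} + \frac{6\Delta}{C} > \frac{C_M\Delta}{C(1-\xi)}$ (equivalently a lower bound on $N^{0.5-\alpha}/\log N$), which is exactly the condition $N^{0.5-\alpha}\log N$ large enough that appears in the hypotheses of Theorem~\ref{Thm:equilibrium} (the $\frac{C(1-\xi)}{2\mu_1 C_M}$ term). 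I expect the main obstacle to be the head-tail lemma's $\min$: one must argue carefully that under the maintained invariant $L_{1,1}(n) \le s_{1,1}^* - 6\Delta/C$ the tail branch $1 - U_M - \cdots$ is the binding one throughout the iteration, since otherwise $L_{1,1}(n+1)$ would saturate at $s_{1,1}^* - 6\Delta/C$ and the clean affine recursion — and hence the identification with $\xi$ — would break; this is where the precise choices of the $\Delta$-corrections and the $N$-condition are doing the real work, and it is worth stating as a separate verification before assembling the bound.
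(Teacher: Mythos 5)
Your proposal matches the paper's own proof: chain Lemma~\ref{lem: iter prob lower bound} ($m=2,\dots,M$), then Lemma~\ref{lem: iter prob upper bound} ($m=2,\dots,M$), then Lemma~\ref{lem: iter head tail}; unroll $U_M$ in $L_{1,1}$ to identify the slope $\xi=\sum_{m=2}^M b_m\prod_{j=m+1}^M a_j$, use the identity $1-\xi=\mu_1\prod_{m=2}^M a_m$ (Lemma~\ref{lem:xi}) to write the fixed point as $\tfrac{1}{\mu_1}-\tfrac{C_M\Delta}{C(1-\xi)}-\tfrac{1}{2\mu_1N^\alpha}$, propagate the $\epsilon_m,\sigma_m$ recursions crudely to get the $(M^2+2)(\tfrac{2C}{\bar v\Delta}+1)^{2M}$ blow-up, and read monotonicity off the affine map with slope $\xi\in(0,1)$ whose fixed point exceeds $s_{1,1}^*-6\Delta/C$ under the stated $N$-condition. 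The subtlety you flag about which branch of the $\min$ in Lemma~\ref{lem: iter head tail} is active is real and is handled only implicitly in the paper (the proof simply ``considers'' the tail branch); your instinct to record it as a separate verification is sound, since when the first branch binds the recursion has already reached the target $s_{1,1}^*-6\Delta/C$ and the iteration in Lemma~\ref{lem: S11 lower bound} may terminate early.
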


\subsection{Proving Theorem \ThmmainA}
Based on the monotonicity of $L_{1,1}(n)$,  we can apply Lemma \ref{lem: L11 lift} a sufficient number of times so that $L_{1,1}(n)$ is close to $s_{1,1}^* - \frac{6\Delta}{C},$ which is formalized in the following lemma.
\begin{lemma}\label{lem: S11 lower bound}
$$\mathbb P\left(S_{1,1} < s_{1,1}^* - \frac{ 6\Delta}{C}\right) \leq \left(\frac{1}{\sqrt{N}}\right)^{M+6}.$$
\end{lemma}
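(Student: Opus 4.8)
The plan is to iterate Lemma \ref{lem: L11 lift} a controlled number of times, starting from the trivial bound $L_{1,1}(0)=0$ with $\epsilon_1(0)=0$, and show that after $n^\star$ steps the lower bound $L_{1,1}(n^\star)$ has been ``lifted'' to within $\frac{6\Delta}{C}$ of $s_{1,1}^\star$, while the accumulated failure probability $\epsilon_1(n^\star)$ is still at most $N^{-(M+6)/2}$. First I would make the recursion in Lemma \ref{lem: L11 lift} explicit: writing $y(n) = L_{1,1}(n) - \left(\frac{1}{\mu_1} - \frac{C_M\Delta}{C(1-\xi)} - \frac{1}{2\mu_1 N^\alpha}\right)$, the lemma gives $y(n+1) = \xi\, y(n)$ as long as $L_{1,1}(n) \le s_{1,1}^\star - \frac{6\Delta}{C}$, so $y(n)$ decays geometrically at rate $\xi \in (0,1)$ from its initial value $y(0) = -\left(\frac{1}{\mu_1} - \frac{C_M\Delta}{C(1-\xi)} - \frac{1}{2\mu_1 N^\alpha}\right)$, whose magnitude is bounded by a constant independent of $N$. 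Consequently, after $n^\star = \lceil \log_{1/\xi}(\kappa \sqrt{N}/\log N) \rceil = O(\log N)$ iterations — where $\kappa$ is an explicit constant chosen using $\Delta = \log N/\sqrt N$ — the fixed point is reached to within $O(\Delta)$, i.e. $L_{1,1}(n^\star) \ge s_{1,1}^\star - \frac{6\Delta}{C}$ after also absorbing the $-\frac{1}{2\mu_1 N^\alpha}$ term, which is $o(\Delta/\text{const})$ in the sub-Halfin-Whitt regime since $\alpha < 0.5$. One must check the side condition $L_{1,1}(n) \le s_{1,1}^\star - \frac{6\Delta}{C}$ holds for every $n < n^\star$ so that Lemma \ref{lem: L11 lift} applies at each step; this follows because the sequence is increasing (the monotonicity clause of Lemma \ref{lem: L11 lift}) and stays below the threshold by the choice of $n^\star$ — and at step $n^\star$ itself we simply invoke the lemma one last time and read off that $L_{1,1}(n^\star)$ has crossed the threshold.

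Next I would track the failure probability. Lemma \ref{lem: L11 lift} multiplies $\epsilon_1$ by the factor $\rho := (M^2+2)\left(\frac{2C}{\bar v \Delta}+1\right)^{2M}$ at each step, but since $\epsilon_1(0)=0$ this naive bound gives $\epsilon_1(n^\star)=0$, which cannot be right — the base case must be seeded by a genuine probabilistic estimate. The correct accounting is that the first application of the iterative lemmas (Lemmas \ref{lem: iter head tail}, \ref{lem: iter prob lower bound}, \ref{lem: iter prob upper bound}) produces a nonzero $\epsilon_1(1)$ of the form $e^{-\Theta(\log^2 N)} = N^{-\Theta(\log N)}$ coming from the Gaussian-type tail term $e^{-\log^2 N / C^2}$ in the definitions of $\epsilon_m, \sigma_m$ — and crucially this seed term is superpolynomially small. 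Then $\epsilon_1(n^\star) \le \epsilon_1(1)\,\rho^{\,n^\star - 1}$, and since $\rho \le (M^2+2)(3C\sqrt{N}/(\bar v \log N))^{2M} = N^{M}\cdot\text{polylog}$, we get $\rho^{n^\star} = N^{O(M \log N)}\cdot\text{polylog}$, so $\epsilon_1(n^\star) \le N^{-\Theta(\log N)} \cdot N^{O(M\log N)}$. The point is that the superpolynomial decay $N^{-\Theta(\log N)}$ of the seed, with a sufficiently large hidden constant controlled by the choice of $C$, dominates the $N^{O(M\log N)}$ blow-up, leaving a bound that is at most $N^{-(M+6)/2}$ for $N$ large — and the $N$-condition in the statement of Theorem \ref{Thm:equilibrium} / Theorem \ref{Thm:main} is exactly what guarantees ``$N$ large enough''. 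I would carry out this exponent bookkeeping carefully, writing $\log \epsilon_1(n^\star) \le -\Theta(\log^2 N) + n^\star \log \rho$ and using $n^\star = O(\log N)$ and $\log\rho = O(\log N)$ to conclude $\log\epsilon_1(n^\star) \le -\Theta(\log^2 N) \le -\frac{M+6}{2}\log N$.

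The main obstacle I anticipate is precisely this quantitative balancing: one needs the constant $C = \sqrt{\frac{2\bar v^2 \log(1/\xi)}{3M + (3M+4)\log(1/\xi)}}$ to be small enough that the exponent $\frac{1}{C^2} = \frac{3M + (3M+4)\log(1/\xi)}{2\bar v^2 \log(1/\xi)}$ in the tail term $e^{-\log^2 N/C^2}$ beats the accumulated multiplicative factor $\rho^{n^\star}$, whose exponent grows like $2M \cdot n^\star \approx 2M\log_{1/\xi}\sqrt N = \frac{M \log N}{\log(1/\xi)}$. Comparing, we need roughly $\frac{\log^2 N}{C^2} \ge \frac{M\log^2 N}{\log(1/\xi)} + (M+6)\log N$ up to lower-order and polylog corrections; substituting the value of $C$, the leading term of $1/C^2$ is $\frac{3M+4}{2\bar v^2} + \frac{3M}{2\bar v^2\log(1/\xi)}$, so the inequality reduces to checking $\frac{3M+4}{2\bar v^2} + \frac{3M}{2\bar v^2 \log(1/\xi)} \ge \frac{M}{\log(1/\xi)} + o(1)$, which holds comfortably since $\frac{3M}{2\bar v^2} \ge M$ (as $\bar v \le 1/M \le 1$, in fact $\bar v < 1$ suffices here after being careful, and the $\frac{3M+4}{2\bar v^2}$ term provides ample slack). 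Making every ``$o(1)$'' and ``polylog'' fully rigorous, and confirming the displayed $N$-conditions of the theorems suffice to absorb them, is the tedious but essential part; everything else is a routine geometric-series argument combined with the monotonicity already furnished by Lemma \ref{lem: L11 lift}.
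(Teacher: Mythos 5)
Your proposal takes the same route as the paper: iterate Lemma~\ref{lem: L11 lift} roughly $n^\star \approx \frac{\log N}{2\log(1/\xi)} = O(\log N)$ times starting from the trivial lower bound, track the multiplicative blow-up $\rho^{n^\star} = N^{O(M\log N)}$ of the failure probability, and verify that the super-polynomial seed $e^{-\Theta(\log^2 N)}$ dominates thanks to the choice of $C$. The structure of your exponent bookkeeping matches the paper's calculation (which actually lands on $(1/\sqrt{N})^{M+8}$, of which $(1/\sqrt{N})^{M+6}$ is a weaker consequence).

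However, one scaling claim is backwards and, taken literally, would break the endgame. You assert the $-\frac{1}{2\mu_1 N^\alpha}$ correction in the fixed point of the $L_{1,1}$-recursion is ``$o(\Delta/\text{const})$ \ldots since $\alpha < 0.5$.'' The opposite holds in the sub-Halfin-Whitt regime: $N^{-\alpha}/\Delta = N^{1/2-\alpha}/\log N \to \infty$, so $\Delta = o(N^{-\alpha})$, not the reverse. This is not a harmless mislabeling of a lower-order term---it is the whole mechanism by which the iteration reaches the target. The fixed point of the recursion in Lemma~\ref{lem: L11 lift} is $\frac{1}{\mu_1} - \frac{C_M\Delta}{C(1-\xi)} - \frac{1}{2\mu_1 N^\alpha}$, while the target is $s_{1,1}^* - \frac{6\Delta}{C} = \frac{1}{\mu_1} - \frac{1}{\mu_1 N^\alpha} - \frac{6\Delta}{C}$; subtracting, the slack after $n^\star$ iterations has the form $\frac{1}{2\mu_1 N^\alpha} + \frac{6\Delta}{C} - \frac{C_M\Delta}{C(1-\xi)} - O(\Delta)$. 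Since $\frac{1}{2\mu_1 N^\alpha}$ is \emph{much larger} than $\Delta$ (made quantitative by the displayed condition $N^{0.5-\alpha} \geq \frac{2\mu_1 C_M}{C(1-\xi)}\log N$), it is precisely what absorbs the $\Delta$-order error terms. If it really were $o(\Delta)$ as you claim, you would instead need an unconditional inequality on the constants such as $C_M \lesssim 6(1-\xi)$, which need not hold. Write the difference out explicitly and let the $N^{-\alpha}$ term do the absorbing, rather than trying to discard it.
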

\begin{proof}
To prove this lemma, we apply Lemma \ref{lem: L11 lift} $n$ times iteratively with $n=\left \lceil{\frac{\log N}{2\log(1/\xi)}}\right \rceil 
$ such that $\xi^{n} \leq \Delta.$
We obtain 
\begin{align*}
\mathbb P\left(S_{1,1} < \frac{1}{\mu_1} -\frac{C_M\Delta}{(1-\xi)C}-\frac{1}{2\mu_1N^\alpha} -\frac{\Delta}{\mu_1}\right) \leq& \epsilon_1(0) (M^2+2)^n \left(\frac{2C}{\bar v\Delta}+1\right)^{2Mn}\\
\leq& \epsilon_1(0) (M^2+2)^{\frac{\log N}{\log (1/\xi)}+1} N^{\frac{2M\log N}{\log (1/\xi)}+2M}\\
\leq & e^{-\frac{\bar v^2 \log^2 N}{C^2}}(M^2+2)^{\frac{\log N}{\log (1/\xi)} + 1} N^{\frac{2M\log N}{\log (1/\xi)} + 2M}\\
=& N^{-\frac{\bar v^2\log N}{C^2}+\frac{2M\log N}{\log (1/\xi)}+2M}(M^2+2)^{\frac{\log N}{\log (1/\xi)}+1}\\
\leq& N^{-\frac{\bar v^2\log N}{C^2}+\frac{3M\log N}{\log (1/\xi)}+2M+1},
\end{align*}
where the third inequality holds because $\epsilon_1(0) \leq e^{-\frac{\bar v^2 \log^2 N}{C^2}},$ and the last inequality holds because $N \geq M^2+2.$

Recalling $C = \sqrt{\frac{2\bar v^2\log (1/\xi)}{3M + (3M+4)\log (1/\xi)}}$ and noting $N \geq 1/\xi,$ we have 
\begin{align*}
&\mathbb P\left(S_{1,1} \leq \frac{1}{\mu_1} -\frac{C_M\Delta}{(1-\xi)C} -\frac{\Delta}{\mu_1}-\frac{1}{2\mu_1N^\alpha}\right)\\
=&\mathbb P\left(S_{1,1} \leq \frac{\lambda}{\mu_1} -\frac{C_M\Delta}{(1-\xi)C} -\frac{\Delta}{\mu_1}+\frac{1}{2\mu_1N^\alpha}\right) \leq \left(\frac{1}{\sqrt{N}}\right)^{M+8},    
\end{align*}
which implies
$$\mathbb P\left(S_{1,1} < s_{1,m}^* - \frac{6\Delta}{C}\right) \leq \left(\frac{1}{\sqrt{N}}\right)^{M+8}$$
because $N^{0.5-\alpha} \geq \frac{2\mu_1C_M}{C(1-\xi)}\log N.$
\end{proof}

The result above established that $$S_{1,1} \geq s_{1,1}^* - \frac{6\Delta}{C} = \lambda v_1 - \frac{6\Delta}{C},$$ with probability $1-\left(\frac{1}{\sqrt{N}}\right)^{M+8}.$ 

Combining with Lemma \ref{lem: iter prob lower bound}, we next prove that $S_{1,m} \geq s_{1,m}^* - \Omega(\Delta), \forall m \geq 2$ holds with a high probability.
Applying Lemma \ref{lem: iter prob lower bound} iteratively for $S_{1,m}, \forall m \geq 2,$ we have 
\begin{align*}
L_{1,m}(n) =& \frac{v_m}{v_1}L_{1,1}(n) - \frac{5(m-1)v_m\Delta}{C}\\
=&\frac{v_m}{v_1} \left(\lambda v_1 - \frac{6\Delta}{C}\right) - \frac{5(m-1)v_m\Delta}{C} \\
=&\lambda v_m - \frac{6\mu_1 v_m +  5(m-1)v_m\Delta}{C},
\end{align*}
and $S_{1,m} \geq L_{1,m}(n)$ holds with the probability $\epsilon_m(n).$ Note $\epsilon_m(n) \leq \epsilon_M(n), \forall m$ and $\epsilon_M(n)$ is bounded as follows
\begin{align*}
\epsilon_M(n) \leq M\epsilon_1(n)\left(\frac{C}{\bar v \Delta}+1\right)^{M-1}
=M\left(\frac{1}{\sqrt{N}}\right)^{M+8} \left(\frac{C}{\bar v \Delta}+1\right)^{M-1}
\leq \frac{1}{N^3}.
\end{align*}
Therefore, we have proved the lower bound in the theorem.

Define the event $\mathcal K = \{S_{1,m} \geq s_{1,m}^* + \frac{\theta_m\log N}{\sqrt{N}}, \forall m\}.$ We have $\mathbb P({\mathcal K}^c) \leq \frac{M}{N^3}$ according to the union bound. We now establish the upper bound in Theorem \ThmmainA~ as follows
\begin{align*}
    1 &= \mathbb P\left(S_{1,m} \leq 1 - \sum_{r\neq m} S_{1,r} \right)\\
      &\leq \mathbb P\left(S_{1,m} \leq 1 - \sum_{r\neq m} S_{1,r} ~|~ \mathcal K\right) +  \mathbb P({\mathcal K}^c)\\
      &\leq \mathbb P\left(S_{1,m} \leq 1 - \sum_{r\neq m} \left(s_{1,r}^* + \frac{\theta_r\log N}{\sqrt{N}}\right)\right) +  \frac{M}{N^3}.
\end{align*}
The proof is completed because  $1-\sum_{r\neq m}^M s_{1,r}^* = s_{1,m}^* +  \frac{1}{N^\alpha}.$ 

\section{Proof of Theorem \ThmmainB}\label{sec: prove thm 2} 
Theorem \ThmmainA~ shows that $S_{1,m}$ is ``close'' to $s_{1,m}^*$ with a high probability. However, it is not clear whether the average total queue length or the waiting probability is small under an LB-zero policy. To establish Theorem \ref{Thm:main}, we first prove an important lemma on the upper bound of the average total queue, which is used to establish the waiting probability in Theorem \ref{Thm:main}.   The proof of this lemma can be found in Appendix \ref{app:lem main}. 
\begin{lemma}\label{lem:main}
Define $w_m = (1-p_m)\mu_m,$ $w_u=\max_m w_m,$ $w_l=\min_m w_m,$ $\mu_{\max}=\max_m \mu_m,$ $\zeta = \frac{4w_ub}{w_l}\left((\frac{1}{w_l}-\frac{1}{w_u})\sum_m \theta_m w_m + \frac{1}{w_l}+6\right)$ and $k = \frac{\sum_{m}\theta_m w_m}{w_u}+(1+\frac{w_l}{4w_ub})\zeta - \sum_m \theta_m.$  Under a load balancing policy in LB-zero, the following bound holds 
\begin{align}
\mathbb E\left[\max\left\{\sum_{i=1}^{b} S_i- \lambda -\frac{k\log N}{\sqrt{N}},0\right\}\right] \leq \frac{5\mu_{\max}+2}{\sqrt{N}\log N}, \nonumber
\end{align}
when $N$ satisfies
\begin{align}
 \min\left\{2k\mu_1,\sum_{m=1}^M\theta_m,\frac{C(1-\xi)}{2\mu_1 C_M}\right\} N^{0.5-\alpha} \geq \log N \geq \max\left\{\log\left(\frac{1}{\xi}\right), \frac{2\mu_1}{1-\xi}, \frac{4b}{w_l \zeta}, \frac{C}{\mu_1}\right\}. \nonumber
\end{align}
\hfill{$\square$}
\end{lemma}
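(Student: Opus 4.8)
The plan is to prove Lemma~\ref{lem:main} by Lyapunov-drift estimates fed into the tail bound of \cite{BerGamTsi_01} together with the generator (Stein) estimate of Lemma~\ref{lem:gen diff}. Write the normalized occupancy as $\sum_{i=1}^{b} S_i = S_1 + W$, where $S_1=\sum_m S_{1,m}$ is the fraction of busy servers and $W:=\sum_{i\ge 2}S_i$ is the normalized number of waiting jobs; since $\sum_i S_i-\lambda-\tfrac{k\log N}{\sqrt N}=(S_1-\lambda)+W-\tfrac{k\log N}{\sqrt N}$ and $S_1-\lambda=N^{-\alpha}-(1-S_1)$, it suffices to show that $W$ and the idle-server deficit $(N^{-\alpha}-(1-S_1))^+=(S_1-\lambda)^+$ are, in the relevant $(\cdot)^+$-sense, of order at most $\tfrac{\log N}{\sqrt N}$ in expectation, with enough room left over for the slack $\tfrac{k\log N}{\sqrt N}$. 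Both are controlled by computing the action of the generator $G$ of Section~\ref{sec: sys dynamics} on suitable test functions, using Theorem~\ref{Thm:equilibrium} to pin $S_{1,m}$ to within $O(N^{-\alpha})$ of $\lambda v_m$.

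For the waiting jobs I would use the defining property of LB-zero: whenever $S_1\le 1-\tfrac{1}{N^\alpha\log N}$ an arriving job is routed to a busy server with probability at most $1/\sqrt N$, so new waiting jobs appear at rate at most $\lambda\sqrt N$; on the other hand a buffer of size $b-1$ forces $S_2\ge W/b$, and each server with at least two jobs releases a waiting job into service whenever its in-service job departs, so $W$ drains at rate at least $w_l N S_2$. This gives $GW(s)\le \tfrac{\lambda}{\sqrt N}-\tfrac{w_l}{b}W$ on $\{S_1\le 1-\tfrac{1}{N^\alpha\log N}\}$, a genuine negative-drift condition away from saturation (the loss incurred in the reduction $W\mapsto S_2$ and in the phase-dependent departure rates is exactly what forces the factors $b$ and $w_u/w_l$ into $\zeta$). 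The complementary \emph{near-saturation} set $\{S_1>1-\tfrac{1}{N^\alpha\log N}\}$ is shown to have small probability, and to contribute negligibly to $\mathbb E[W]$, by combining Theorem~\ref{Thm:equilibrium} with a drift estimate on the idle fraction $1-S_1$: when idle servers get scarce they are created (by departures from one-job servers) at rate $\approx N$ while being consumed (by arrivals) at rate at most $\lambda N$, so $1-S_1$ has positive drift of order $N^{-\alpha}$ and is pushed back toward $N^{-\alpha}$, which also bounds $\mathbb E[(S_1-\lambda)^+]$.

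To convert these drift inequalities into the stated expectation bound I would feed them into \cite{BerGamTsi_01}: for a nonnegative test function $V$ with jumps $\le\nu$, total jump rate $\le R$, and $GV(s)\le-\gamma$ whenever $V(s)\ge B$ except on a set of probability $\le\delta_0$, one gets $\mathbb E[(V-B)^+]\le \tfrac{R\nu^2}{2\gamma}+(\max V)\,\delta_0$; here $R=\Theta(N)$, $\nu=1/N$, and $\delta_0=O(1/N^3)+O(1/(N^{0.5-\alpha}\log N))$ from Theorem~\ref{Thm:equilibrium} and the near-saturation bound, while $\max_s\sum_i S_i\le b$. Equivalently this is run through the quadratic test function $V^2$ and the stationary identity $\mathbb E[G(V^2)(S)]=0$, which is the form in which the small exceptional set does no harm. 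Choosing the threshold–drift pair so that $\gamma=\Theta(\log N/\sqrt N)$ makes the leading term $\Theta(1/(N\gamma))=\Theta(1/(\sqrt N\log N))$, and selecting $k$ and $\zeta$ exactly so that $\tfrac{k\log N}{\sqrt N}$ swallows the threshold $B$ and the leading constant lands at $5\mu_{\max}+2$ — this is where the stated lower bounds on $\log N$ and the condition $2k\mu_1 N^{0.5-\alpha}\ge\log N$ are used — yields the claim after summing the two pieces.

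The hard part will be the near-saturation regime $\{S_1>1-\tfrac1{N^\alpha\log N}\}$: there the LB-zero guarantee on $A_1(s)$ is vacuous, so the negative drift of $W$ can fail, and one must bound the probability of this set and the expected excess of $W$ on it by a genuinely separate argument coupling Theorem~\ref{Thm:equilibrium} to a drift estimate on the idle fraction, rather than by a single clean Lyapunov function. Everything else is careful bookkeeping: propagating the buffer size $b$ and the phase-rate ratios $w_u/w_l$ through the $W\!\to\!S_2$ reduction to produce $\zeta$, and then fixing $k$ so that all the accumulated slacks fit under the prescribed range of $N$.
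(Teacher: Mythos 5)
Your broad plan (ISSP for the phase distribution, a negative-drift-above-threshold argument with $\gamma=\Theta(\log N/\sqrt N)$, the generator estimate of Lemma~\ref{lem:gen diff}, the $b$ and $w_u/w_l$ factors entering $\zeta$ via the $W\to S_2$ reduction) is directionally right, but there is a genuine gap in the near-saturation regime, and the way you propose to handle it is circular.

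Concretely: you put the probability of $\{S_1>1-\tfrac{1}{N^\alpha\log N}\}$ into the exceptional-set mass $\delta_0$ and estimate it as $O\!\left(\tfrac{1}{N^{0.5-\alpha}\log N}\right)$. But this probability is essentially $\mathbb P(\mathcal W)$, the very quantity Theorem~\ref{Thm:main} is trying to control from Lemma~\ref{lem:main}, so invoking it here is circular; and even granting it, the resulting contribution $(\max V)\,\delta_0 \le b\cdot O\!\left(\tfrac{1}{N^{0.5-\alpha}\log N}\right)$ exceeds the target $O\!\left(\tfrac{1}{\sqrt N\log N}\right)$ for any $\alpha>0$. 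Your intuition that near saturation idle servers are ``created at rate $\approx N$'' is also not quite correct under Coxian service: the departure rate is $N\sum_m w_m s_{1,m}$, which can be as low as $w_l N$ if the excess mass of busy servers is in a slow-departure phase. The paper resolves both issues at once by \emph{not} splitting into $W$ and $(S_1-\lambda)^+$: it carves out the exceptional set $\mathcal S_{ssp}^c=\mathcal S_{ssp_1}^c\cap\mathcal S_{ssp_2}^c$ (simultaneously ``phase distribution off or too few busy servers'' and ``total queue above threshold''), and bounds its probability by $2/N^2$ using Lemma~\ref{SSC:s1 s2}, which applies the tail bound to the single joint Lyapunov function $V(s)=\min\{\lambda+k\Delta-s_1,\ \sum_{i\ge 2}s_i\}$. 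When $V$ is large either the queue is draining via LB-zero (the $W$-side) or the departure rate $D_1$ exceeds $\lambda+\Delta$ because $s_1$ is near $1$ with the ISSP-pinned phase profile (the $S_1$-side, made precise by the LP minimization in Lemma~\ref{SSC:s1 s2 negative}, which is exactly what fixes $k$ and $\zeta$). That coupled Lyapunov function is the missing ingredient; without it your exceptional set is too large and the decomposition into two scalar drift arguments does not close. Finally, the stated constant $5\mu_{\max}+2$ is produced entirely by the Stein gradient bounds $J_2+J_3\le\tfrac{5\mu_{\max}+\lambda}{\sqrt N\log N}$ in Lemma~\ref{lem:gen diff}, with the peeling term $J_1$ contributing only a lower-order $\tfrac{2b}{N^{1.5}\log N}$ — your bookkeeping should route the constant through those gradient bounds rather than through the drift parameters.
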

Lemma \ref{lem:main} establishes an upper bound on the average total queue length. Recall Theorem \ThmmainA~ indicates the service rate under an LB-zero policy in $\Pi$ is close to arrival rate $\lambda N$ at steady state because $\sum_{m=1}^M (1-p_m)S_{1,m} \approx \sum_{m=1}^M (1-p_m)s_{1,m}^* =\lambda.$ Therefore, it is reasonable to couple the distributed load balancing system with a simple centralized server system with a similar arrival rate and service rate. This coupling will be done via Stein's method \cite{BraDaiFen_16,Yin_16,Bra_21}. Stein's method allows us to understand the key performance metrics (e.g., average queue length) of a complicated load balancing system from the performance of a simple fluid system (to be introduced  in the next section). Formally, we study the generator difference between the distributed load balancing system and a simple centralized system within a small state space identified by  ISSP. This idea of coupling a simple (and almost trivial) fluid model, without ISSP, has been also used in \cite{LiuYin_20,LiuYin_21,LiuGonYin_21}. We will introduce it next so the paper is self-contained.

\subsection{Generator Coupling with a Single Server System}

Denote $\Delta = \frac{\log N}{\sqrt{N}}.$ We consider a single server queue with arrival rate $\lambda$ and service rate $\lambda+\Delta.$ The fluid model with respect to the queue length $x$ is  
\begin{align}
    \dot{x}=\frac{dx}{dt}=-\Delta. \label{eq:simple fluid}
\end{align} Let function $g(x)$ be the solution of the following Stein's equation or Poisson equation \cite{Yin_16} for the fluid system above and distance function $h(x)$ such that:
\begin{align}
\frac{dg(x)}{dt} = g'(x) (-\Delta) = h(x),  \forall x, \label{Gen:L}
\end{align}
where {$g'(x) = \frac{d g(x)}{d x}$}. Because \eqref{Gen:L} has a very simple form, both $g'$ and $g''$ can be easily solved which is different from  other applications of Stein's method, e.g., \cite{BraDaiFen_16}, where establishing the gradient bounds is a key difficulty.

To analyze the total queue length at steady-state under an LB-zero policy in $\Pi,$ we choose a truncated distance function:
$$h\left(\sum_{i=1}^bS_i\right) = \max\left\{\sum_{i=1}^bS_i-\eta,0\right\}, ~~ \eta =\lambda +k\Delta.$$ 
The distance $h\left(\sum_{i=1}^bS_i\right)$ can be viewed as a proxy to measure the total queue length ($N\sum_{i=1}^b S_i$) at steady state. 

To couple the one-dimensional fluid system in \eqref{eq:simple fluid} with the $b\times M$-dimensional stochastic system, we define
\begin{equation}
f(s)=g\left(\sum_{i=1}^b s_{i}\right)=g\left(\sum_{i=1}^b\sum_{m=1}^M s_{i,m}\right).\label{eq:steinsolution}
\end{equation} 

Note $f(s)$ is bounded for $s\in\mathcal{S}^{(N)},$ we impose the generator of stochastic system $G$ on function $f$ and have the basic adjoint relationship for the stationary distribution $S$ such that
\begin{equation}
\mathbb E[Gf(S)]=\mathbb E\left[Gg\left(\sum_{i=1}^b\sum_{m=1}^M S_{i,m}\right)\right]=0.\label{eq:sse}
        \end{equation}

Combining \eqref{Gen:L} and \eqref{eq:sse}, we connect the performance metric $h(\cdot)$ with the generator difference between the simple single-server system and $G$ as follows
\begin{align}
\mathbb E\left[h\left(\sum_{i=1}^b\sum_{m=1}^M S_{i,m}\right)\right] 
= \mathbb E\left[ g'\left(\sum_{i=1}^b\sum_{m=1}^M S_{i,m}\right) (-\Delta) - Gg\left(\sum_{i=1}^b\sum_{m=1}^M S_{i,m}\right)\right].\label{eq:gencou}
\end{align}
In the following lemma, we provide an upper bound on \eqref{eq:gencou} which includes two terms: the first term is from the gradient bounds and the second term is from ISSP. The proof of this lemma can be found in Appendix \ref{app:gen-diff}.
\begin{lemma}\label{lem:gen diff}
Define the regions $\mathcal T = \{ x ~|~ x > \eta+\frac{1}{N}\}$ and denote the normalized service rate $D_1=\sum_{m=1}^M(1-p_m)\mu_m S_{1,m},$ we have
\begin{align}
&\mathbb E\left[h\left(\sum_{i=1}^b  S_{i}\right)\right]
\leq J_1 +  \frac{5\mu_{\max}+\lambda}{\sqrt{N}\log N}, \label{eq:gen diff}
\end{align}
with
\begin{align}
J_1 =& \mathbb E\left[g'\left(\sum_{i=1}^b S_{i}\right)\left(\lambda A_b(S) - \lambda-\Delta+D_1\right)\mathbb{I}_{\sum_{i=1}^b S_{i} \in \mathcal T}\right]. \label{G-expansion-SSC}
\end{align}
\qed
\end{lemma}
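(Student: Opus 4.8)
The plan is to evaluate the generator term $\mathbb E[Gg(\sum_i S_i)]$ in \eqref{eq:gencou} exactly and then replace $Gg$ by its first-order Taylor approximation, exploiting that the Stein solution $g$ is concave with $|g''|\le 1/\Delta$. First I would reduce the $b\times M$-dimensional generator to a scalar birth--death generator for the quantity $y=\sum_{i=1}^b\sum_{m=1}^M s_{i,m}$ (the normalized total number of jobs). Among the three events of Section \ref{sec: sys dynamics}: a phase transition $m\to m+1$ (term \eqref{G-Si-change}) leaves $y$ unchanged; an arrival to a non-full server raises $y$ by $1/N$; a genuine phase-$m$ departure lowers $y$ by $1/N$. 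Summing the rates over $i$ and $m$ (the arrival rates telescope to $\lambda N\sum_m(A_{0,m}(s)-A_{b,m}(s))=\lambda N(1-A_b(s))$ and the departure rates sum to $N\sum_m(1-p_m)\mu_m S_{1,m}=ND_1$), so with $f(s)=g(y)$,
\[
Gg(y)=\lambda N\bigl(1-A_b(s)\bigr)\bigl(g(y+\tfrac1N)-g(y)\bigr)+N D_1\bigl(g(y-\tfrac1N)-g(y)\bigr).
\]

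Next I would Taylor-expand each increment around $g'(y)$. Since $h\ge0$, \eqref{Gen:L} gives $g'(x)=-h(x)/\Delta=-\max\{x-\eta,0\}/\Delta$, so $g'$ is non-positive, non-increasing and $(1/\Delta)$-Lipschitz, hence $g$ is concave; writing $g(y\pm\tfrac1N)-g(y)=\pm\tfrac1N g'(y)+R_\pm$, concavity gives $R_\pm\le0$ and the Lipschitz bound on $g'$ gives $|R_\pm|\le\frac1{2N^2\Delta}$. Substituting, and bounding $\lambda(1-A_b(s))\le\lambda$ and $D_1\le\mu_{\max}$ (since $\sum_m S_{1,m}=S_1\le1$), I obtain the pointwise bound $Gg(y)\ge(\lambda(1-A_b(s))-D_1)g'(y)-\frac{\lambda+\mu_{\max}}{2N\Delta}$. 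Plugging this into $\mathbb E[h(\sum_i S_i)]=-\Delta\,\mathbb E[g'(\sum_i S_i)]-\mathbb E[Gg(\sum_i S_i)]$ and using $N\Delta=\sqrt N\log N$ yields
\[
\mathbb E\!\Bigl[h\bigl(\textstyle\sum_i S_i\bigr)\Bigr]\ \le\ \mathbb E\!\Bigl[g'\bigl(\textstyle\sum_i S_i\bigr)\bigl(\lambda A_b(S)-\lambda-\Delta+D_1\bigr)\Bigr]+\frac{\lambda+\mu_{\max}}{\sqrt N\log N}.
\]

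Finally I would insert the indicator $\mathbb I_{\sum_i S_i\in\mathcal T}$ by splitting the last expectation over $\{\sum_i S_i\in\mathcal T\}$, which is exactly $J_1$, and over $\mathcal T^c=\{\sum_i S_i\le\eta+\tfrac1N\}$. On $\mathcal T^c$, either $\sum_i S_i\le\eta$, where $g'(\sum_i S_i)=0$, or $\eta<\sum_i S_i\le\eta+\tfrac1N$, where $|g'(\sum_i S_i)|=(\sum_i S_i-\eta)/\Delta\le\frac1{N\Delta}$, while $|\lambda A_b(S)-\lambda-\Delta+D_1|$ is bounded by a constant depending only on $\mu_{\max}$ (using $0\le A_b\le1$, $\lambda\le1$, $D_1\le\mu_{\max}$ and $\Delta<1$ in the admissible range of $N$); hence the $\mathcal T^c$ term is $O\bigl(\mu_{\max}/(\sqrt N\log N)\bigr)$ and gets absorbed into the error, producing the claimed constant $5\mu_{\max}+\lambda$.

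The conceptual step --- observing that phase transitions are invisible to $\sum_{i,m}S_{i,m}$, so that the $b\times M$-dimensional generator collapses to a scalar birth--death generator --- is clean; I expect the main obstacle to be the bookkeeping that turns the various second-order/remainder contributions together with the thin-strip $\{\eta<\sum_i S_i\le\eta+1/N\}$ term into exactly $\frac{5\mu_{\max}+\lambda}{\sqrt N\log N}$ with the stated constants. One must also be careful that $g$ is only $C^1$ (not $C^2$) at the kink $\eta$, so only the first-order Taylor expansion with an integral remainder --- valid because $g'$ is Lipschitz --- may be used.
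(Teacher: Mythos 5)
Your proof is correct and follows the paper's proof structure closely: you reduce the generator to a scalar birth--death form by noting phase transitions leave $\sum_{i,m}s_{i,m}$ invariant, invoke the basic adjoint relationship $\mathbb E[Gg(S)]=0$ together with Stein's equation to write $\mathbb E[h]=-\Delta\mathbb E[g']-\mathbb E[Gg]$, Taylor-expand the generator, and split to isolate $J_1$. The one place you genuinely depart from the paper's bookkeeping is in how you handle the $C^1$-but-not-$C^2$ kink of $g$ at $\eta$: the paper first partitions the line into the thin strip $\mathcal T_1=\{\eta-\tfrac1N\le x\le\eta+\tfrac1N\}$ and $\mathcal T_2=\{x>\eta+\tfrac1N\}$, applies the mean-value theorem on $\mathcal T_1$ and a second-order Taylor expansion (using $g''$) on $\mathcal T_2$, and bounds the resulting $J_2$ and $J_3$ separately via the gradient lemmas; you instead perform a single global first-order expansion $g(y\pm\tfrac1N)-g(y)=\pm\tfrac1N g'(y)+R_\pm$, control $|R_\pm|\le\tfrac1{2N^2\Delta}$ uniformly from the Lipschitz continuity of $g'$ (which is exactly the $|g''|\le1/\Delta$ bound but valid across the kink), and only then split $\mathcal T$ from $\mathcal T^c$ to insert the indicator. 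This is a mild streamlining --- the quantitative inputs ($|g'|\le\tfrac{2}{\sqrt N\log N}$ near $\eta$, $g'$ Lipschitz with constant $1/\Delta$, $D_1\le\mu_{\max}$) are identical --- but it avoids having to carry a separate mean-value-theorem term. Your resulting constant is of the same order $O\bigl(\mu_{\max}/(\sqrt N\log N)\bigr)$ and, since $\mu_1\ge1$ forces $\mu_{\max}\ge1$, it is in fact dominated by the paper's $(5\mu_{\max}+\lambda)$; you correctly flag that matching the paper's exact constant is a bookkeeping exercise rather than a conceptual issue.
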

To establish Lemma \ref{lem:main} and Theorem \ref{Thm:main} based on the lemmas above, we need to provide the upper bounds on \eqref{G-expansion-SSC}, which is related to the difference between the normalized arrival rate and the normalized service rate, which will be bounded based on the ISSP result that shows the the normalized service rate at the steady-state is close to the zero-waiting equilibirium value.

\subsection{State Space Peeling on $\sum_{m=1}^M(1-p_m)S_{1,m}$}\label{sec:SSC}

In this subsection, we analyze $J_1$ in \eqref{G-expansion-SSC}:
\begin{align}
J_1=& \mathbb E\left[\frac{1}{\Delta} h\left(\sum_{i=1}^bS_{i}\right) \left(-\lambda A_b(S) + \lambda + \Delta-D_1\right)\mathbb{I}_{\sum_{i=1}^b S_{i} > \eta + \frac{1}{N}}\right]\nonumber\\
\leq& \mathbb E\left[\frac{1}{\Delta} h\left(\sum_{i=1}^b S_{i}\right) \left(\lambda + \Delta-D_1\right)\mathbb{I}_{\sum_{i=1}^b S_{i} > \eta + \frac{1}{N}}\right]\label{SSC0},
\end{align} where the first equality is due to the definition of $g'$ in Stein's equation \eqref{Gen:L}, and the inequality holds because $\frac{1}{\Delta} h\left(\sum_{i=1}^b S_{i}\right) \mathbb{I}_{\sum_{i=1}^b S_{i} > \eta + \frac{1}{N}}\geq 0.$
We focus on 
\begin{equation}
\left(\lambda + \Delta-\sum_{m=1}^M(1-p_m)\mu_m s_{1,m}\right)\mathbb{I}_{\sum_{i=1}^b s_{i} > \eta + \frac{1}{N}},\label{eq:sscterm}
\end{equation}
where we recall $\eta = \lambda +k\Delta$ and $d_1=\sum_{m=1}^M(1-p_m)\mu_m s_{1,m}$ is the total service rate when the system is in the state $s.$ Though we have established $S_{1,m} \approx s_{1,m}^*$ in Theorem \ThmmainA, it is not sufficient for showing \eqref{SSC0} is small enough because $\lambda + \Delta - D_1$ may be larger than $\Delta,$ which will make \eqref{SSC0} very large. In fact, we need one more state space peeling to show \eqref{SSC0} is non-positive.

We define two regions $\mathcal S_{ssp_1}$ and $\mathcal S_{ssp_2}$ 
\begin{align*}
\mathcal S_{ssp_1} =& \left\{s ~|~ s_1 \geq \lambda + \left(k-\zeta-6\right)\Delta, s_{1,m} \geq s_{1,m}^* - \theta_m\Delta\right\}, \\
\mathcal S_{ssp_2} =& \left\{s ~|~ \sum_{i=1}^{b} s_i \leq \lambda+k\Delta \right\},
\end{align*}
where $\mathcal S_{ssp_1}$ is the region with sufficient many busy servers and $\mathcal S_{ssp_2}$ is the region with bounded total queue length. We further define a region $$\mathcal S_{ssp} = \mathcal S_{ssp_1} \bigcup S_{ssp_2},$$ and consider two cases: $s\in \mathcal S_{ssp} $ and $s\not\in \mathcal S_{ssp},$
\begin{itemize}
    \item {\bf Case 1:}
    In Lemma \ref{SSC:s1 s2 negative} in the appendix, we show any $s\in \mathcal S_{ssp_1}$ satisfies $$\sum_{m=1}^M(1-p_m)\mu_m s_{1,m} \geq \lambda + \Delta.$$ It implies  $\left(\lambda + \Delta-\sum_{m=1}^M(1-p_m)\mu_m s_{1,m}\right)\mathbb{I}_{\sum_{i=1}^b s_{i} > \eta + \frac{1}{N}} \leq 0$ for any $s\in \mathcal S_{ssp_1}.$ For any $s\in \mathcal S_{ssp_2},$ we have $$\mathbb{I}_{\sum_{i=1}^b s_{i} > \eta + \frac{1}{N}}=0.$$ It implies $\left(\lambda + \Delta-\sum_{m=1}^M(1-p_m)\mu_m s_{1,m}\right)\mathbb{I}_{\sum_{i=1}^b s_{i} > \eta + \frac{1}{N}} =0$ for any $s\in \mathcal S_{ssp_2}.$ 
    
    \item {\bf Case 2:} In Lemma \ref{SSC:out} in the appendix, we show that $$\mathbb P\left( S \notin \mathcal S_{ssp}\right) \leq \frac{2}{N^{2}}$$ by using an ISSP approach on $S_1$ and $\sum_{i=2}^b S_i.$ 
\end{itemize}

\subsection{Proving Lemma \ref{lem:main}}\label{app:lem main}
Based on the two cases above, we split \eqref{SSC0} into two regions $s \in S_{ssp}$ and $s \notin S_{ssp}$  and obtain 
\begin{align}
\eqref{SSC0}=&\mathbb E\left[\frac{1}{\Delta}\left(\sum_{i=1}^bS_i-\eta\right)\left(\lambda+\Delta-D_1\right)\mathbb{I}_{S \in \mathcal S_{ssp}}\mathbb{I}_{\sum_{i=1}^bS_i> \eta +\frac{1}{N}}\right]\nonumber\\
&+\mathbb E\left[\frac{1}{\Delta}\left(\sum_{i=1}^bS_i-\eta\right)\left(\lambda+\Delta-D_1\right)\mathbb{I}_{S \notin \mathcal S_{ssp}}\mathbb{I}_{\sum_{i=1}^bS_i> \eta +\frac{1}{N}}\right] \nonumber\\
\leq&\mathbb E\left[\frac{1}{\Delta}\left(\sum_{i=1}^bS_i-\eta\right)\left(\lambda+\Delta-D_1\right)\mathbb{I}_{S \notin \mathcal S_{ssp}}\mathbb{I}_{\sum_{i=1}^bS_i> \eta +\frac{1}{N}}\right] \nonumber\\
\leq& \frac{2b}{N^{1.5}\log N},  \label{SSC-bound}
\end{align}
where the first inequality holds because of Lemma \ref{SSC:s1 s2 negative} and the second inequality holds because the average total number of jobs per server is at most $b$ and $\left(\lambda+\Delta-D_1\right)\mathbb{I}_{S \notin \mathcal S_{ssp}}\mathbb{I}_{\sum_{i=1}^bS_i> \eta +\frac{1}{N}} < 1.$

By combining \eqref{eq:gen diff} and \eqref{SSC-bound}, we can now establish Lemma \ref{lem:main} in the following
\begin{align}
&\mathbb E\left[\max\left\{\sum_{i=1}^{b} S_i- \eta,0\right\}\right] \leq\frac{2b}{N^{1.5}\log N}+\frac{5\mu_{\max}+\lambda}{\sqrt{N}\log N} \leq \frac{5\mu_{\max}+2}{\sqrt{N}\log N}. \nonumber
\end{align} 

\subsection{Proving Theorem \ThmmainB} \label{sec:0-delay}
Once we have Lemma \ref{lem:main}, we can prove Theorem \ref{Thm:main} with the property of LB-zero and the Markov inequality. 
For an LB-zero policy in $\Pi,$ the waiting probability satisfies 
\begin{align*}
\mathbb P(\mathcal W)=& \mathbb P\left(\mathcal W|\sum_{i=1}^bS_i\leq 1 - \frac{1}{N^\alpha\log N}\right)\mathbb P\left(\sum_{i=1}^bS_i\leq 1 -\frac{1}{N^\alpha\log N}\right) \\
&+ \mathbb P\left(\mathcal W|\sum_{i=1}^bS_i> 1 -\frac{1}{N^\alpha\log N}\right)\mathbb P\left(\sum_{i=1}^bS_i > 1 - \frac{1}{N^\alpha\log N}\right) \\
\leq& \mathbb P\left(\mathcal W|\sum_{i=1}^bS_i< 1 - \frac{1}{N^\alpha\log N}\right) + \mathbb P\left(\sum_{i=1}^bS_i > 1 - \frac{1}{N^\alpha\log N}\right) 
\end{align*}
where the first term is bounded by $\frac{1}{\sqrt{N}}$ because of the definition of LB-zero. For the second term, we have   
\begin{align*}
\mathbb P\left(\sum_{i=1}^bS_i > 1 - \frac{1}{N^\alpha\log N}\right) \leq& \mathbb P\left(\max\left\{\sum_{i=1}^bS_i-\lambda -\frac{k\log N}{\sqrt{N}}, 0\right\}> \frac{1}{N^{\alpha}}\left(1-\frac{1}{\log N}\right)-\frac{k\log N}{\sqrt{N}}\right)\\
\leq& \frac{\mathbb E \left[ \max\left\{\sum_{i=1}^bS_i-\lambda -\frac{k\log N}{\sqrt{N}}, 0\right\} \right] }{\frac{1}{N^{\alpha}}\left(1-\frac{1}{\log N}\right)-\frac{k\log N}{\sqrt{N}}}\\
\leq& \frac{\mathbb E \left[ \max\left\{\sum_{i=1}^bS_i-\lambda -\frac{k\log N}{\sqrt{N}}, 0\right\}\right]}{\frac{1}{2N^{\alpha}}}\\
\leq&\frac{10\mu_{\max}+4}{N^{0.5-\alpha} \log N}
\end{align*}
where the second inequality holds because of the Markov inequality; the third inequality holds because $\log N \geq 2$ and $\frac{N^{0.5-\alpha}}{\log N}\geq 2k;$ and the last inequality holds because of Lemma \ref{lem:main}. 

Finally, we remark that we choose $k=\Omega(b)$ to prove Lemma \ref{SSC:s1 s2},  which is the technical reason we assumed $b$ is finite. We, however, believe our results hold even for $b=\infty.$  

\section{Simulations}
In this section, we confirm our theoretical results of ISSP in Theorem \ref{Thm:equilibrium} and the large system insensitivity in Theorem \ref{Thm:main} with simulations. We considered two policies, JSQ and JIQ, and $\lambda = 1 - N^{\alpha}$ 
We conjecture that our results hold even for $\alpha=0.5$ because it holds for any $\alpha<0.5.$ To confirm this, we used $\alpha =0.5$ in our simulations. 

\subsection{Large System Insensitivity under JSQ and JIQ}
We first studied the average total queue length (per server) $\mathbb E[\sum_{i=1}^b S_{i}]$ and the waiting probability $\mathbb P(\mathcal W)$ under a Coxian-$4$ service time distribution with the parameters $p=[0.5, 0.5, 0.5, 1.0]$ and $\mu=[1.875, 1.875, 1.875, 1.875].$ We plotted $\mathbb E[\sum_{i=1}^b S_{i}]$ and  $\mathbb P(\mathcal W)$ versus the number of servers $N.$ The results are obtained with $10$ trials and each trial has $10^7$ steps. From Figure \ref{fig:verse N}, the waiting probability tends to zero when $N$ increases as we expected and JIQ almost has the identical performance as JSQ. 
\begin{figure}[H]
     \centering
     \begin{subfigure}[b]{0.49\textwidth}
         \centering
         \includegraphics[width=\textwidth]{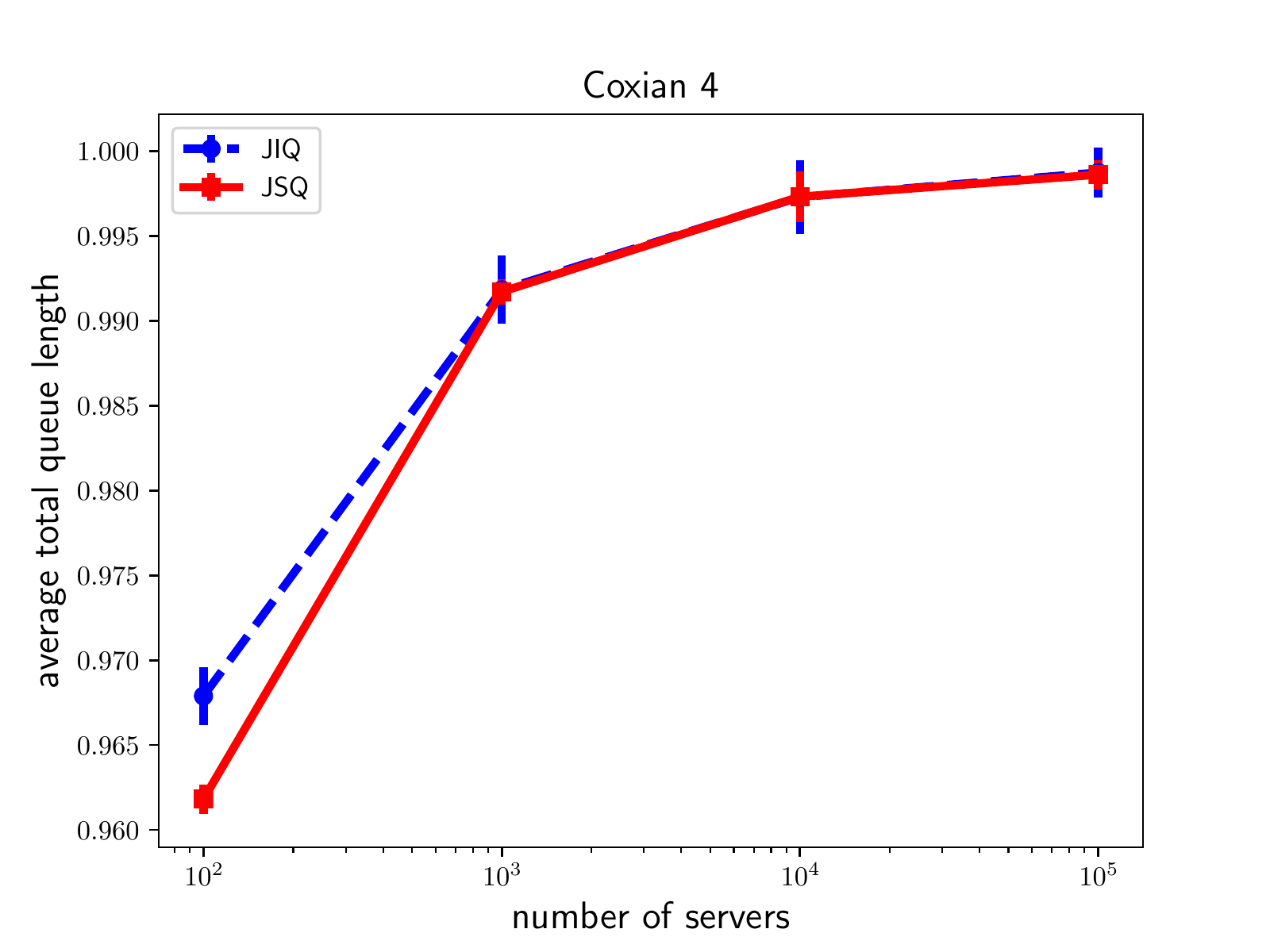}
         \caption{Average total queue length under Coxian $4$}
     \end{subfigure}
     \hfill
     \begin{subfigure}[b]{0.49\textwidth}
         \centering
         \includegraphics[width=\textwidth]{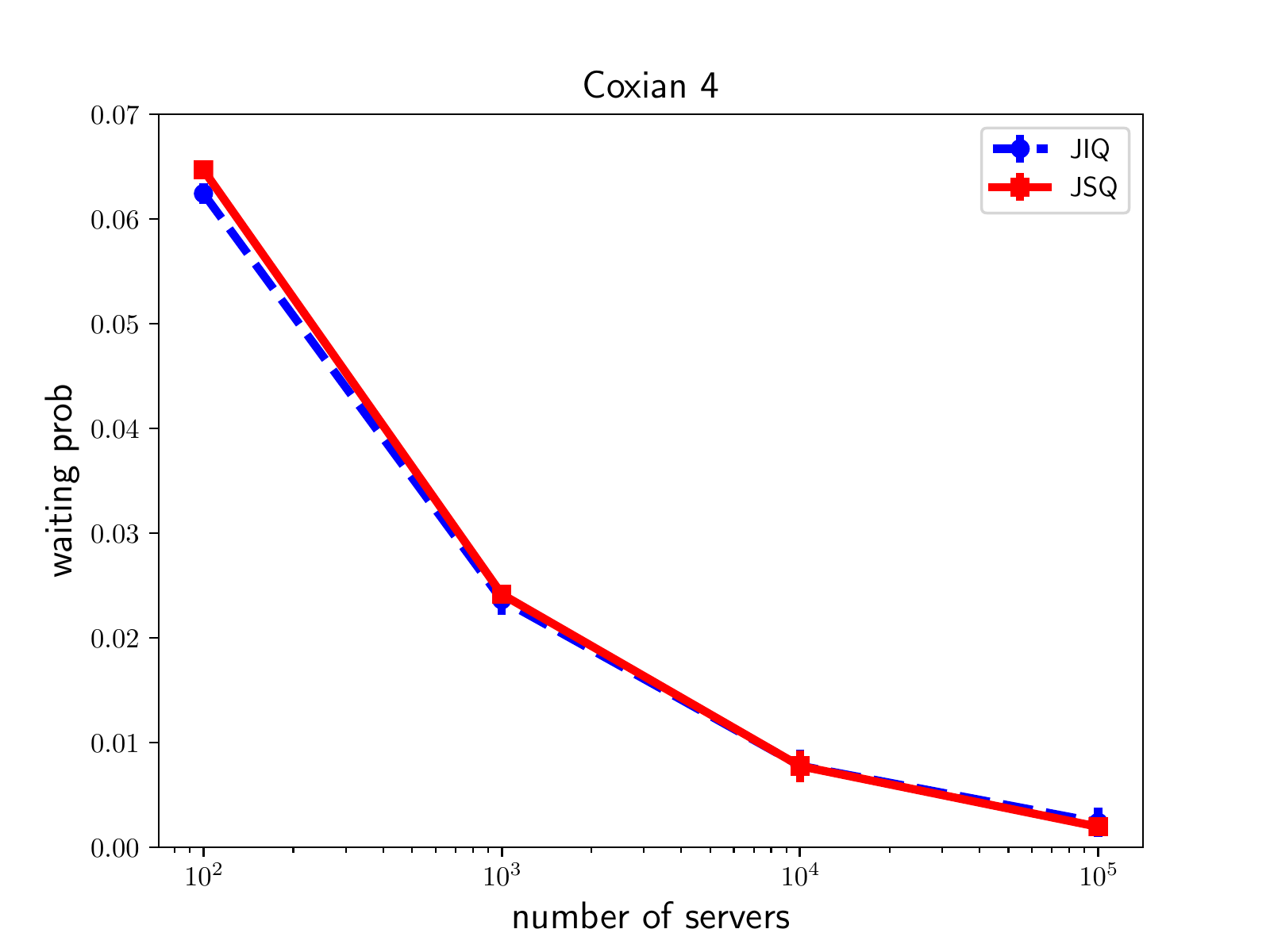}
         \caption{Waiting probability under Coxian $4$}
     \end{subfigure}
     \caption{Asymptotic zero waiting under JSQ and JIQ}
     \label{fig:verse N}
\end{figure}
\begin{figure}[H]
     \centering
     \begin{subfigure}[b]{0.49\textwidth}
         \centering
         \includegraphics[width=\textwidth]{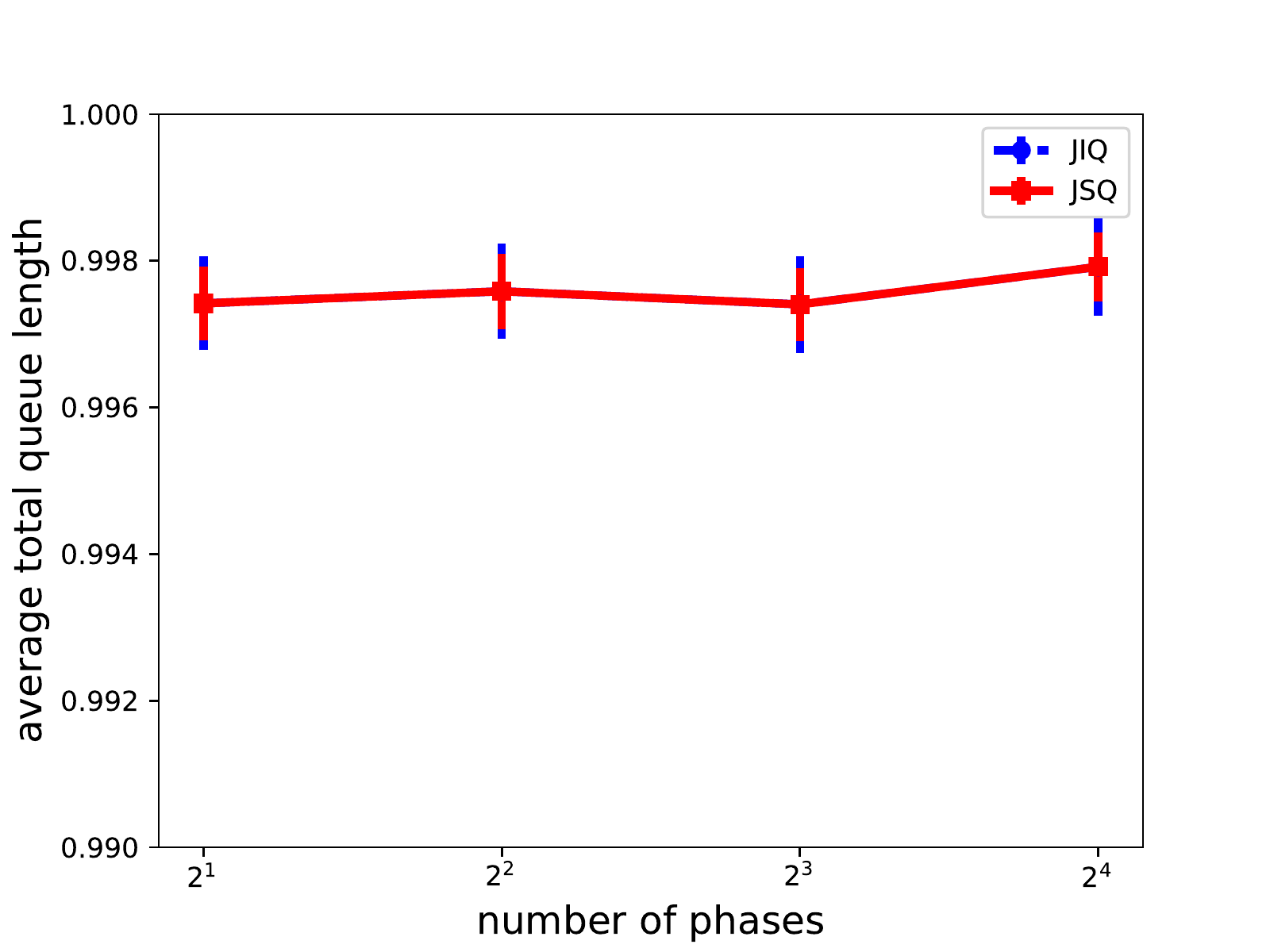}
         \caption{Average total queue length under Coxian $M$}
     \end{subfigure}
     \hfill
     \begin{subfigure}[b]{0.49\textwidth}
         \centering
         \includegraphics[width=\textwidth]{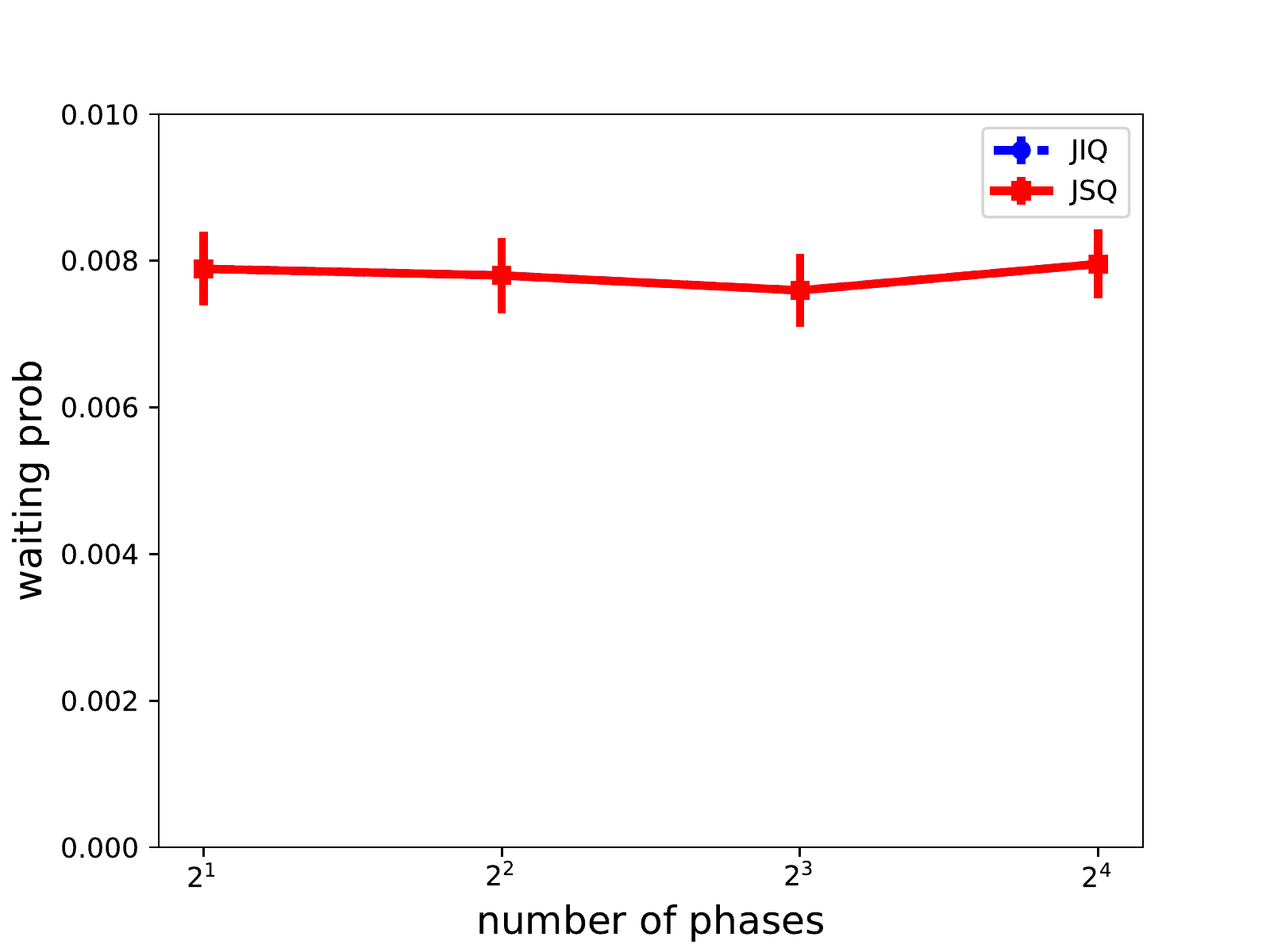}
         \caption{Waiting probability under Coxian $M$}
     \end{subfigure}
     \caption{Large System Insensitivity under JSQ and JIQ}
     \label{fig:verse phase M}
\end{figure}
We then investigated $\mathbb E[\sum_{i=1}^b S_{i}]$ and $\mathbb P(\mathcal W)$ versus Coxian-$M$ with various number of phases $M$ and fixed $N=10^4.$ In particular, we consider Coxian-$M$ with $p=[p_1, p_2, \cdots, 1]$ and $\mu=[\mu_1, \mu_2, \cdots, \mu_M],$ where $p_m = 0.5, \forall 1 \leq m \leq M-1$ and $\bar \mu= \mu_m, \forall m$ (identical service times). We plotted $\mathbb E[\sum_{i=1}^b S_{i}]$ and  $\mathbb P(\mathcal W)$ versus $M.$ From Figure \ref{fig:verse phase M}, we can observe that the average queue length and the waiting probability remain roughly the same under different $M$s, which confirms the insensitivity.

\subsection{ISSP under JSQ and JIQ}
In this section, we investigated ISSP by studying the trajectory of $S_{1,m}(t), \forall m$ under JSQ and JIQ. We considered $N=10,000$ and a Coxian-$4$ service time distribution with $p=[0.5, 0.5, 0.5, 1.0]$ and $\mu=[1.875, 1.875, 1.875, 1.875].$ 
We plotted $S_{1,m}(t)$ of JSQ and JIQ in Figure \ref{fig:ISSP}. The results are obtained with $10$ trials and each trial has $10^6$ steps. We observed that $S_{1,m(t)}$ under both policies concentrates around dash lines $s_{1,m}^*, \forall m,$ which confirms the high probability bounds in \ThmmainA. Note that the system was initialized with the zero-waiting equilibrium, instead of the empty state, in our simulations. 
\begin{figure}[H]
     \centering
     \begin{subfigure}[b]{0.49\textwidth}
         \centering
         \includegraphics[width=\textwidth]{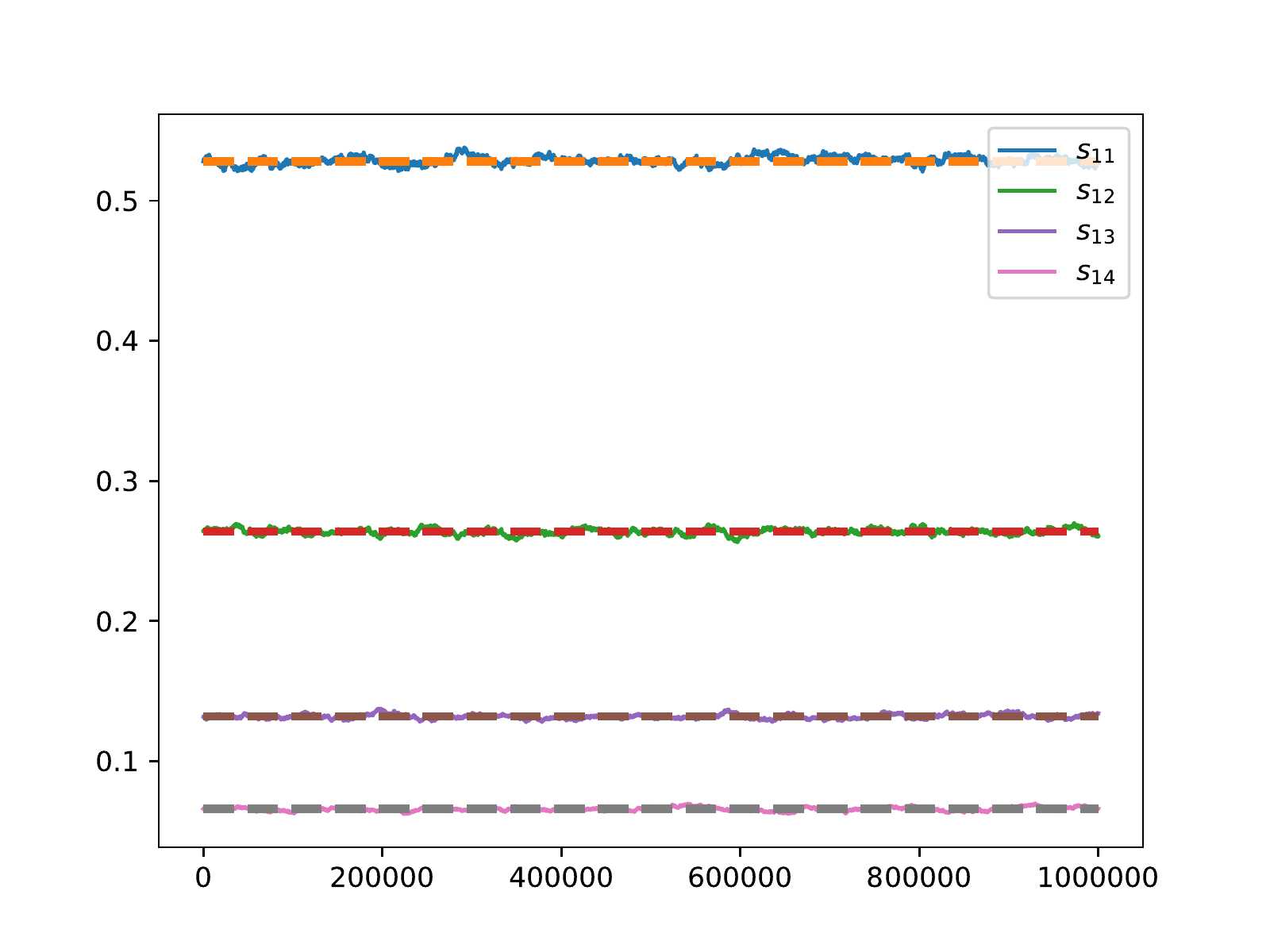}
     \caption{Trajectories under JSQ}
     \end{subfigure}
     \hfill
     \begin{subfigure}[b]{0.49\textwidth}
         \centering
         \includegraphics[width=\textwidth]{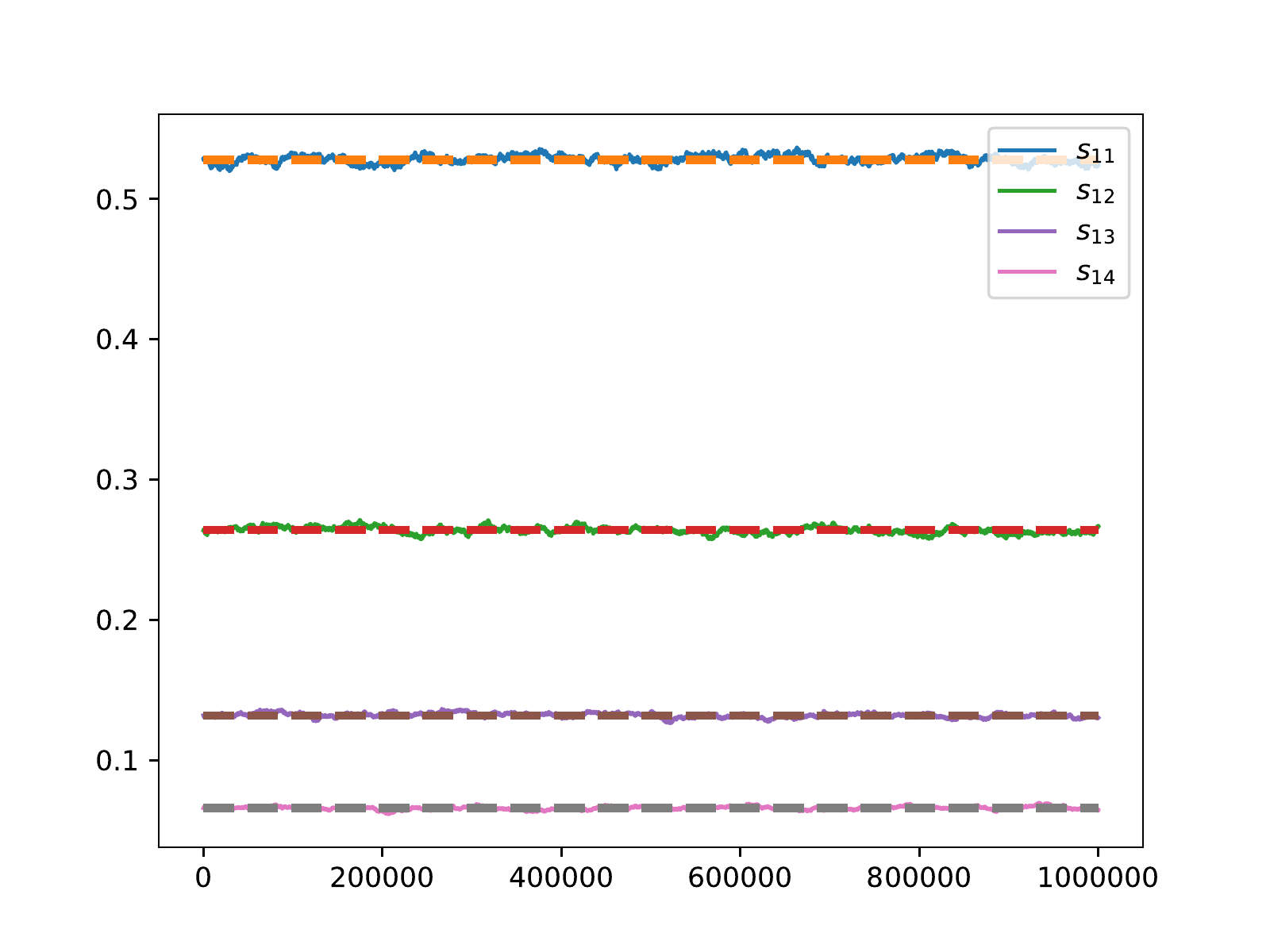}
     \caption{Trajectories under JIQ}
     \end{subfigure}
     \caption{The evolution of the system states under JSQ and JIQ}
     \label{fig:ISSP}
\end{figure}

\section{Conclusions and Discussions}
In this paper, we studied a distributed queueing system under Coxian service time distributions in the sub-Halfin-Whitt regime. We established that a set of load balancing policies, named LB-zero, achieves asymptotic zero-waiting, i.e. insensitive in the large-system regime. To tackle the non-monotonicity under general service time distributions, we developed a technique, called  iterative state-space peeling (ISSP), which iteratively removes the low-probability states, and results in a small state-space that can be analyzed using a simple mean-field model. This ISSP approach may be used for other problems as well. One possible application is to study load-balancing in many server systems with heterogeneous servers or jobs belonging to multiple priority classes. For heterogeneous servers, we can use ISSP to identify the ``typical'' load of each type of the servers; and for jobs with different priorities, we can use ISSP to identify the ``typical'' distribution of job types in the system. In the reduced state space based on the typical load of the typical distribution at the steady-state, the steady-state performance of the many server system may become tractable like in this paper.

\bibliographystyle{abbrv}
\bibliography{inlab-refs}%

\appendix

\section{The Properties of the Constants}\label{app:constants}
The following two lemmas show that the constants $a_m,$ $b_m,$ $c_m,$ ($\forall m \geq 2,$) and $C_M$ are positive, and $0<\xi<1.$ 
\begin{lemma}\label{lem:positive constants}
The constants $a_m,b_m,c_m, \forall m \geq 2,$ and $C_M$ are positive.
\end{lemma}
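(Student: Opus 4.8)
The plan is to treat the four families of constants separately: $a_m$, $c_m$, and $C_M$ are positive essentially by inspection once one records that $a_m\in(0,1]$, and the only step requiring a genuine argument is the positivity of the lower bounds $b_m$. First I would observe that $a_m=\frac{\mu_m}{p_1\mu_1+\mu_m}$ has a positive numerator and a positive denominator (since $\mu_m>0$ and $p_1\mu_1\ge 0$), so $a_m>0$; moreover $a_m\le 1$, equivalently $1-a_m=\frac{p_1\mu_1}{p_1\mu_1+\mu_m}\ge 0$, which I will use repeatedly. For $c_m$, each of its first three summands is a nonnegative linear combination of quantities of the form $v_r$ and $\mu_r v_r/\mu_m$ (all nonnegative), with coefficients built from $1-a_m\ge0$, $a_m>0$ and $m-2\ge0$, while the last summand satisfies $5-a_m\ge 5-1=4>0$; hence $c_m\ge 4>0$. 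Finally $C_M=\sum_{m=2}^{M}c_m\prod_{j=m+1}^{M}a_j$ is a sum of products of strictly positive numbers (the empty product being $1$ when $m=M$), so $C_M>0$. These three claims together occupy only a few lines.

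The substance of the lemma is $b_m>0$, which I would establish by clearing denominators. Multiplying the definition of $b_m$ by $(p_1\mu_1+\mu_m)\,v_1>0$ and substituting $1-a_m=\frac{p_1\mu_1}{p_1\mu_1+\mu_m}$ and $a_m=\frac{\mu_m}{p_1\mu_1+\mu_m}$ gives
\[
(p_1\mu_1+\mu_m)\,v_1\,b_m \;=\; p_1\mu_1\Bigl(v_1+\sum_{r=m+1}^{M}v_r\Bigr)-\mu_m v_m .
\]
Next I would invoke the elementary identities $\mu_1 v_1=1$ and $\mu_m v_m=\prod_{i=1}^{m-1}p_i$, both immediate from $v_m=\prod_{i=1}^{m-1}p_i/\mu_m$, to rewrite the right-hand side as
\[
p_1-\prod_{i=1}^{m-1}p_i+p_1\mu_1\sum_{r=m+1}^{M}v_r .
\]
Each of these three terms is nonnegative: the tail sum is a sum of nonnegative $v_r$, and $\prod_{i=1}^{m-1}p_i=p_1\prod_{i=2}^{m-1}p_i\le p_1$ because $0\le p_i<1$. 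Strict positivity follows from a short case split: if $m\ge 3$, the factor $p_2<1$ forces $\prod_{i=2}^{m-1}p_i<1$, hence $p_1-\prod_{i=1}^{m-1}p_i>0$; if $m=2$, that difference vanishes but the tail term $p_1\mu_1\sum_{r=3}^{M}v_r$ is strictly positive. Either way the right-hand side is positive, so $b_m>0$.

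The main obstacle --- indeed the only one --- is this $b_m$ computation: recognizing that the right move is to clear denominators and telescope via $\mu_m v_m=\prod_{i=1}^{m-1}p_i$, and then tracking carefully where the strict inequality originates in the boundary cases $m=2$ and $m=M$. This is precisely where the model hypotheses enter in an essential way: $\mu_m>0$ keeps the denominators positive, $p_i<1$ for $i<M$ bounds $\prod_{i=1}^{m-1}p_i$ by $p_1$, and the non-degeneracy of the $M$-phase representation ($p_i>0$ for $i<M$, with $M\ge 3$) is what makes either a factor $p_i$ genuinely below $1$ or the tail sum genuinely positive --- allowing $p_1=0$ or $M=2$ would yield only $b_m=0$. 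Everything else is routine bookkeeping, and I would compress the treatment of $a_m$, $c_m$ and $C_M$ to one or two sentences each.
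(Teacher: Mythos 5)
Your proof is correct and follows the same algebraic route as the paper for the substantive part, namely the positivity of $b_m$: clearing denominators via $\mu_1 v_1 = 1$ and $\mu_m v_m = \prod_{i=1}^{m-1}p_i$, you arrive at $(p_1\mu_1+\mu_m)v_1\,b_m = p_1\bigl(1-\prod_{i=2}^{m-1}p_i\bigr) + p_1\mu_1\sum_{r=m+1}^{M}v_r$, which is precisely the paper's identity $b_m = \frac{p_1\mu_1}{p_1\mu_1+\mu_m}\bigl(1-\prod_{i=2}^{m-1}p_i\bigr)+(1-a_m)\sum_{r=m+1}^{M}\frac{v_r}{v_1}$ with the common factor $(p_1\mu_1+\mu_m)v_1$ cleared, and your handling of $a_m$, $c_m$, $C_M$ is the same quick inspection. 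Where you improve on the paper is the final step. The paper asserts strict positivity in one line, citing only $\prod_{i=2}^{m-1}p_i\le 1$ and $0<a_m<1$, but those facts only deliver $b_m\ge 0$: each of the two summands can individually vanish, the first at $m=2$ (empty product equals $1$) and the second at $m=M$ (empty tail sum), so a case split is genuinely needed. Your split on $m\ge 3$ versus $m=2$, together with the explicit observation that the argument requires $p_1>0$, $p_i>0$ for $i<M$, and $M\ge 3$ (otherwise one only gets $b_m\ge 0$ and in fact $\xi=0$, as happens for Coxian-$2$ or for $p_1=0$), is the careful version of what the paper leaves implicit. So your proposal matches the paper's computation but correctly identifies and repairs a real, if minor, gap in its positivity claim.
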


\begin{proof}
$1 < a_m < 1, \forall 2\leq m \leq M$ holds by the definition. Therefore, it is easy to verify $c_m, \forall m \geq 2,$ and $C_M$ are positive.

Next, we prove $b_m, \forall m \geq 2,$ is positive as follows:
\begin{align}
b_m =& (1-a_m)\left(1 + \sum_{r=m+1}^M \frac{v_{r}}{v_{1}}\right) - \frac{a_mv_m}{v_1} \nonumber\\
=& 1- a_m - \frac{a_mv_m}{v_1} + (1-a_m)\sum_{r=m+1}^M \frac{v_{r}}{v_{1}} \nonumber\\
=& \frac{p_1 \mu_1}{p_1\mu_1 + \mu_m} \left(1 - \prod_{i=2}^{m-1}p_i\right) + (1-a_m)\sum_{r=m+1}^M \frac{v_{r}}{v_{1}} \nonumber\\
>& 0 \nonumber
\end{align}
where the first equality holds by the definition of $b_m;$ the third equality by substituting the definition of $a_m$ and $v_m;$ the last inequality holds because $\prod_{i=2}^{m-1}p_i \leq 1, \forall m \geq 2$ and $0 < a_m < 1, \forall m \geq 2.$
\end{proof}

\begin{lemma}\label{lem:xi}
$1-\xi = \mu_1\prod_{m=2}^M a_m.$
\end{lemma}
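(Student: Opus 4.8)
The statement is purely algebraic, and the only structural inputs it requires are the normalization $\sum_{m=1}^{M} v_m = 1$ together with $v_1 = \prod_{i=1}^{0} p_i / \mu_1 = 1/\mu_1$. I would abbreviate $P_m := \prod_{j=m+1}^{M} a_j$, so that $P_M = 1$, $P_{m-1} = a_m P_m$, and $P_1 = \prod_{j=2}^{M} a_j$; the goal then reads $\xi = \sum_{m=2}^{M} b_m P_m = 1 - \mu_1 P_1$. The plan is to evaluate $\sum_{m=2}^{M} b_m P_m$ directly by splitting
\[
b_m = (1-a_m) \;+\; (1-a_m)\sum_{r=m+1}^{M}\frac{v_r}{v_1} \;-\; \frac{a_m v_m}{v_1}
\]
and treating the three contributions separately.

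For the first contribution, the elementary telescoping identity $(1-a_m)P_m = P_m - a_m P_m = P_m - P_{m-1}$ gives $\sum_{m=2}^{M}(1-a_m)P_m = P_M - P_1 = 1 - P_1$. For the remaining two contributions, I would use $1/v_1 = \mu_1$ together with $(1-a_m)P_m = P_m - P_{m-1}$ and $a_m P_m = P_{m-1}$ to rewrite their sum as $\mu_1\big[\sum_{m=2}^{M}(P_m - P_{m-1})\sum_{r=m+1}^{M} v_r \;-\; \sum_{m=2}^{M} P_{m-1} v_m\big]$. The inner double sum is then collapsed by summation by parts (Abel summation): writing $Q_m = \sum_{r=m+1}^{M} v_r$, one has $Q_m - Q_{m+1} = v_{m+1}$ and $Q_M = 0$, so after an index shift the double sum equals $\sum_{m=3}^{M} P_{m-1} v_m - P_1 Q_2$; subtracting $\sum_{m=2}^{M} P_{m-1} v_m$ leaves $-P_1(Q_2 + v_2) = -P_1 \sum_{r=2}^{M} v_r$, and invoking $\sum_{m=1}^{M} v_m = 1$ this is $-P_1(1-v_1)$.

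Combining the pieces yields $\sum_{m=2}^{M} b_m P_m = (1 - P_1) + \mu_1\big(-P_1(1-v_1)\big) = 1 - P_1\big(1 + \mu_1 - \mu_1 v_1\big) = 1 - \mu_1 P_1$, using $\mu_1 v_1 = 1$; this is exactly $1 - \xi = \mu_1 \prod_{m=2}^{M} a_m$. (As a sanity check, for the Erlang-$3$ case $a_2 = a_3 = 1/2$, $\mu_1 = 3$ this gives $1-\xi = 3/4$, matching the coefficient $\xi = 1/4$ found in the worked example.) Conceptually there is no obstacle; the one step that needs care is the summation-by-parts collapse of $\sum_{m=2}^{M}(P_m - P_{m-1}) Q_m$ — keeping the index shifts and boundary terms $P_M, Q_M$ straight — and making sure it is precisely the normalization $\sum_m v_m = 1$ that closes the telescoping.
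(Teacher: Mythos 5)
Your proof is correct and uses essentially the same telescoping strategy as the paper's, with the normalization $\sum_m v_m = 1$ and $v_1 = 1/\mu_1$ closing the identity in the same way. The only cosmetic difference is bookkeeping: the paper first rewrites $(1-a_m)\sum_{r>m}\tfrac{v_r}{v_1}-\tfrac{a_m v_m}{v_1} = \sum_{r>m}\tfrac{v_r}{v_1}-a_m\sum_{r\geq m}\tfrac{v_r}{v_1}$, so that (in your notation) the last two contributions collapse into the single pure telescope $\tfrac{1}{v_1}(Q_m P_m - Q_{m-1}P_{m-1})$, whereas you keep them separate and close via explicit Abel summation plus a cancellation — both land on the same computation.
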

\begin{proof}
Recall the definition of $\xi=\sum_{m=2}^{M} b_m  \prod_{j=m+1}^M a_j.$ We have
\begin{align}
1- \sum_{m=2}^{M} b_m \prod_{j=m+1}^M a_j 
=& 1- \sum_{m=2}^{M} \left((1-a_m)\left(1 + \sum_{r=m+1}^M \frac{v_{r}}{v_{1}}\right) - a_m\frac{v_m}{v_1}\right) \prod_{j=m+1}^M a_j \nonumber\\
=& 1- \sum_{m=2}^{M} (1-a_m) \prod_{j=m+1}^M a_j - \sum_{m=2}^{M} \left((1-a_m)\sum_{r=m+1}^M \frac{v_{r}}{v_{1}} - a_m\frac{v_m}{v_1}\right) \prod_{j=m+1}^M a_j \nonumber\\
=& \prod_{j=2}^M a_j - \sum_{m=2}^{M} \left(\sum_{r=m+1}^M \frac{v_{r}}{v_{1}} - a_m\sum_{r=m}^M\frac{v_r}{v_1}\right) \prod_{j=m+1}^M a_j \nonumber\\
=& \prod_{j=2}^M a_j - \sum_{m=2}^{M} \sum_{r=m+1}^M \frac{v_{r}}{v_{1}}  \prod_{j=m+1}^M a_j + \sum_{m=2}^{M} \sum_{r=m}^M\frac{v_r}{v_1} \prod_{j=m}^M a_j \nonumber\\
=& \prod_{j=2}^M a_j + \sum_{m=2}^{M} \frac{v_m}{v_1} \prod_{j=2}^M a_j = \prod_{j=2}^M a_j + \frac{1-v_1}{v_1} \prod_{j=2}^M a_j = \mu_1\prod_{m=2}^M a_m. \nonumber
\end{align}
\end{proof}

\section{Proof of the Lemmas for Theorem \ThmmainA} \label{App:IterSSC}

We first prove Lemmas  \ref{lem: iter head tail}, \ref{lem: iter prob lower bound}, \ref{lem: iter prob upper bound}, and \ref{lem: L11 lift} used in ISSP. 

\subsection[]{A tail bound from \cite{WanMagSri_21}}

We introduce Lemma \ref{tail-bound-cond} from \cite{WanMagSri_21}, which is an extension of the tail bound in \cite{BerGamTsi_01} and is the key to establish ISSP. Lemma \ref{tail-bound-cond} allows us to apply the Lyapunov drift analysis to iteratively reduce the state space. 

\begin{lemma}\label{tail-bound-cond}
Let $(S(t): t \geq 0)$ be a continuous-time Markov chain over a finite state space $\mathcal S$ and is irreducible, so it has a unique stationary distribution $\pi.$  Consider a Lyapunov function $V: \mathcal S \to R^{+}$ and define the drift of $V$ at a state $s \in \mathcal S$ as $$\nabla V(s) = \sum_{s' \in \mathcal S: s' \neq s} q_{s,s'} (V(s') - V(s)),$$ where $q_{s,s'}$ is the transition rate from $s$ to $s'.$ Assume
\begin{align*} 
\nu_{\max} :=& \max\limits_{s,s'\in \mathcal S: q_{s,s'} >0} |V(s') - V(s)|< \infty ~~ \text{ and } ~~ \bar q := \max\limits_{s \in \mathcal S} (-q_{s,s}) < \infty 
\end{align*}
and define $$q_{\max}:=\max\limits_{s \in \mathcal S} \sum_{s' \in \mathcal S: V(s) < V(s')} q_{s,s'}.$$
Assume there exists a set $\mathcal E$ with $B>0$, $\gamma>0$, $\delta \geq 0$ such that the following conditions hold
\begin{enumerate}[(i)]
\item $\nabla V(s) \leq -\gamma$ when $V(s) \geq B$ and $s \in \mathcal E.$
\item $\nabla V(s) \leq \delta$ when $V(s) \geq B$ and $s \notin \mathcal E.$
\end{enumerate}
Then $$\mathbb P \left(V(S)\geq B + 2 \nu_{\max} j \right) \leq \alpha^j + \beta \mathbb P\left(S \notin \mathcal E\right), ~ \forall j \in \mathbb{N},$$
with $$\alpha = \frac{q_{\max}\nu_{\max}}{q_{\max}\nu_{\max} + \gamma} ~~\text{ and }~~ \beta = \frac{\delta}{\gamma}+1.$$
\end{lemma}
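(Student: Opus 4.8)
The plan is to prove this as a Lyapunov-drift tail bound in the spirit of \cite{BerGamTsi_01}, with the added wrinkle of the exceptional set $\mathcal E$. The backbone is the stationary (basic adjoint) identity $\mathbb E_\pi[Gf(S)]=0$, which holds for every bounded $f:\mathcal S\to\mathbb R$ because $\mathcal S$ is finite and $\bar q<\infty$. I would apply this with a test function of the form $f(s)=\phi(V(s))$ for a convex, nondecreasing profile $\phi$ --- either a geometric profile $\phi(x)=\rho^{(x-B)_+/(2\nu_{\max})}$ with $\rho$ slightly above $1$, or, to keep the accounting of $\mathcal E$ clean, a family of truncated ``ramp'' functions $\psi_j(s)=\min\{(V(s)-B-2\nu_{\max}j)_+,\,2\nu_{\max}\}$, one per level $j$. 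The $2\nu_{\max}$ spacing of levels is chosen precisely so that a single jump of $V$, being of size at most $\nu_{\max}$, cannot skip a level.

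The core computation is a one-step estimate on $G\phi(V(s))$ in the region $\{V(s)\ge B\}$. Splitting the transitions at $s$ into up-jumps ($V(s')>V(s)$) and down-jumps, the up-contribution is controlled by $\sum_{s':V(s')>V(s)}q_{s,s'}(V(s')-V(s))\le q_{\max}\nu_{\max}$ (by the definitions of $q_{\max}$ and $\nu_{\max}$), while the down-contribution is controlled through $\nabla V(s)$. On $\{V\ge B\}\cap\mathcal E$ the hypothesis $\nabla V(s)\le-\gamma$ forces the down-flux to exceed the up-flux by at least $\gamma$; feeding this into a short secant/convexity estimate for $\phi$ turns the identity $\mathbb E_\pi[G\phi(V)]=0$ into a geometric recursion for the level weights $v_j:=\mathbb P(V(S)\ge B+2\nu_{\max}j)$, namely $v_{j+1}\le\alpha v_j$ with $\alpha=\frac{q_{\max}\nu_{\max}}{q_{\max}\nu_{\max}+\gamma}$. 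Iterating from $v_0\le1$ gives the $\alpha^j$ term.

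To incorporate states in $\mathcal E^c$, note that on $\{V\ge B\}\cap\mathcal E^c$ one only knows $\nabla V(s)\le\delta$, so the same one-step inequality picks up, at each level, an extra nonnegative term: the recursion becomes $v_{j+1}\le\alpha v_j+(1-\alpha)\tfrac{\gamma+\delta}{\gamma}\,\mathbb P(S\notin\mathcal E)$, the coefficient of $\mathbb P(S\notin\mathcal E)$ being $\frac{\gamma+\delta}{\gamma+q_{\max}\nu_{\max}}=(1-\alpha)\tfrac{\gamma+\delta}{\gamma}$. Summing this geometric recursion and using $\sum_{i\ge0}\alpha^i=\frac1{1-\alpha}$ collapses the correction to $\frac{\gamma+\delta}{\gamma}\,\mathbb P(S\notin\mathcal E)=\beta\,\mathbb P(S\notin\mathcal E)$ with $\beta=\frac{\delta}{\gamma}+1$, which is exactly the claimed bound.

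I expect the main obstacle to lie in two intertwined bookkeeping points that make this more delicate than the single-use drift bound of \cite{BerGamTsi_01} (and are why one wants the refinement of \cite{WanMagSri_21}). First, the test function saturates: a geometric profile or a ramp has no useful drift for states far above level $j$, so the contraction must be argued level by level rather than through one global exponential Lyapunov function, and the width-$\nu_{\max}$ ``boundary bands'' where a jump can cross a level have to be handled so that no flux is double-counted or missed. Second, and more importantly, the $\mathcal E^c$-term must appear in the conclusion \emph{without} the level weight $\alpha^{-j}$ attached --- otherwise it would blow up as $j$ grows --- so one has to truncate the test function at each level and verify that at level $j$ the $\mathcal E^c$-contribution is genuinely bounded by $\mathbb P(S\notin\mathcal E)$ and not by that quantity inflated by the geometric weight. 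Once these two issues are handled carefully, the geometric summation and the identification of $\alpha$ and $\beta$ are routine.
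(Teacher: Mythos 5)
The paper does not prove Lemma~\ref{tail-bound-cond}; it is quoted verbatim from \cite{WanMagSri_21} as an extension of the Bertsimas--Gamarnik--Tsitsiklis tail bound, so there is no in-paper proof for you to match. What can be assessed is whether your reconstruction is sound, and I believe it is. Applying the basic adjoint relationship $\mathbb E_\pi[Gf(S)]=0$ to a level-indexed, $1$-Lipschitz-in-$V$ ramp, splitting the generator into up-flux (bounded by $q_{\max}\nu_{\max}$) and down-flux (controlled by the drift hypotheses, with $\gamma$ on $\mathcal E$ and $\delta$ on $\mathcal E^c$), and then iterating the resulting geometric recursion $v_j \le \alpha v_{j-1} + (1-\alpha)\beta\,\mathbb P(S\notin\mathcal E)$ is precisely the standard route, and it reproduces the stated $\alpha$ and $\beta$. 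Your instinct that the $\mathcal E^c$ term must enter each step with a \emph{fixed} weight $\tfrac{\gamma+\delta}{\gamma+q_{\max}\nu_{\max}}=(1-\alpha)\beta$, so that geometric summation absorbs it into $\beta\,\mathbb P(S\notin\mathcal E)$ rather than an $\alpha^{-j}$-inflated version, is the right one.

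One simplification worth noting: since $\mathcal S$ is finite, $V$ is bounded, so you do not actually need the saturated ramp $\psi_j$; the untruncated shifted ramp
$$h_j(s)=\bigl(V(s)-B-(2j-1)\nu_{\max}\bigr)_+$$
already does everything. With this choice, $Gh_j(s)=\nabla V(s)$ whenever $V(s)\ge B+2j\nu_{\max}$ (every reachable $s'$ has $V(s')\ge B+(2j-1)\nu_{\max}$, so the positive part is inactive), $Gh_j(s)=0$ when $V(s)<B+(2j-2)\nu_{\max}$, and in the width-$2\nu_{\max}$ band in between one only needs the crude up-flux bound $Gh_j(s)\le q_{\max}\nu_{\max}$. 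Then $0=\mathbb E[Gh_j(S)]$ gives directly
$$\gamma v_j \;\le\; (\gamma+\delta)\,\mathbb P(S\notin\mathcal E) \;+\; q_{\max}\nu_{\max}\,(v_{j-1}-v_j),$$
i.e.\ $v_j\le\alpha v_{j-1}+(1-\alpha)\beta\,\mathbb P(S\notin\mathcal E)$, which iterates to the claim; the saturation bookkeeping you flag as the ``main obstacle'' disappears, and the $2\nu_{\max}$ spacing is precisely what makes the three-region case split airtight. Your recursion as written targets $v_{j+1}$ on the left rather than $v_j$, which costs one power of $\alpha$, but since $v_0\le 1$ this is immaterial to the stated bound; if you want the exact $\alpha^j$ you should center the ramp at $B+(2j-1)\nu_{\max}$ as above rather than at $B+2j\nu_{\max}$.
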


According to Lemma \ref{tail-bound-cond}, the critical step in establishing the tail bound is to construct proper Lyapunov functions. In the following sections, we construct a sequence of Lyapunov functions and apply Lemma \ref{tail-bound-cond} to prove Lemmas \ref{lem: iter head tail}, \ref{lem: iter prob lower bound}, \ref{lem: iter prob upper bound},  and \ref{lem: L11 lift}. In the following proofs, we ignore the iteration number $n$ for a clean notation. 
 
\subsection{Proof of Lemma \ref{lem: iter head tail}: A lower bound on $L_{1,1}(n+1)$ given $\sum_{m=2}^{M} S_{1,m} \leq U_{M}(n)$}
\iterheadtail*

To prove Lemma \ref{lem: iter head tail} using Lemma \ref{tail-bound-cond}, we consider the following Lyapunov function
\begin{align}
V(s) = \tilde{L}_{1,1} - s_{1,1}, \label{Ly:s11}
\end{align}  where $\tilde{L}_{1,1} = \min\left\{1 - \frac{1-\xi}{2\mu_1N^\alpha} - U_M, s_{1,1}^*\right\}$ and define $$\mathcal E = \left\{s ~\left|~ \sum_{m=2}^{M} s_{1,m} \leq U_{M}\right.\right\}\quad \hbox{ and } \quad B=\frac{2\Delta}{C}.$$

When $V(s) = \tilde{L}_{1,1} - s_{1,1} \geq B$ and $s\in \mathcal E,$ 
we have $$s_1 = \sum_{m=1}^M s_{1,m} \leq U_M + \tilde{L}_{1,1} - \frac{2\Delta}{C} = 1-\frac{1-\xi}{2\mu_1N^\alpha}- \frac{2\Delta}{C} \leq 1 - \frac{1}{N^\alpha\log N},$$
where the last inequality holds due to $\log N \geq \frac{2\mu_1}{1-\xi}.$
Therefore, the drift of $V(s)$ satisfies
\begin{align*} 
\nabla V(s) =& -\lambda (1-A_1(s)) + \mu_1s_{1,1} - \sum_m (1-p_m)\mu_ms_{2,m}\\
\stackrel{(a)}{\leq}&  \frac{1}{\sqrt{N}}-\lambda  + \mu_1s_{1,1}\\
\stackrel{(b)}{\leq}&  \frac{1}{\sqrt{N}}-\lambda  + \mu_1\left(\tilde{L}_{1,1} - \frac{2\Delta}{C}\right)\\
=&  \frac{1}{\sqrt{N}}- (\lambda - \mu_1 \tilde{L}_{1,1}) -\frac{2\mu_1\Delta}{C}\\
\stackrel{(c)}{\leq}&  \frac{1}{\sqrt{N}} -\frac{2\mu_1\Delta}{C}\\
\stackrel{(d)}{\leq}& -\frac{\mu_1\Delta}{C}
\end{align*} where
\begin{itemize}
\item $(a)$ holds because $A_{1}(s) \leq \frac{1}{\sqrt{N}}$ when $s_1\leq  1 - \frac{1}{N^\alpha\log N}$ for a LB-zero policy in $\Pi$ and $s_{2,m} \geq 0;$
\item $(b)$ holds because $V(s) = \tilde{L}_{1,1} - s_{1,1} \geq \frac{2\Delta}{C};$
\item $(c)$ holds because $\tilde{L}_{1,1} \leq s_{1,1}^* = \frac{\lambda}{\mu_1};$
\item and $(d)$ holds because $\log N \geq \frac{C}{\mu_1}.$
\end{itemize} 

Moreover, we have 
$$\nabla V(s) = -\lambda (1-A_1(s)) + \mu_1s_{1,1} - \sum_m (1-p_m)\mu_ms_{2,m} \leq \mu_1.$$ 

Define $\gamma =  \frac{\mu_1\Delta}{C}$ and $\delta = \mu_1.$ We now apply Lemma \ref{tail-bound-cond} with $j=\frac{2\sqrt{N}\log N}{C}.$ Since $q_{\max} = \mu_{1}N$ and $\nu_{\max} = \frac{1}{N},$ we have $$\alpha = \frac{1}{1 + \frac{\Delta}{C}} ~~\text{ and }~~ \beta =\frac{C}{\Delta}+1,$$ 
and
\begin{align*}
\mathbb P\left(S_{1,1} <L_{1,1}(n+1)\right)\leq& \mathbb P\left(S_{1,1} \leq L_{1,1}(n+1)\right)\\
\stackrel{(a)}{=} & \mathbb P\left(V(S) \geq B + 2 \nu_{\max} j\right) \\
\stackrel{(b)}{\leq}& \left(\frac{1}{1 + \frac{\Delta}{C}}\right)^{\frac{2\sqrt{N}\log N}{C}} + \beta \mathbb P\left(S \notin {\mathcal E}\right)\\
\stackrel{(c)}{\leq}& \left(1 - \frac{\Delta}{2C} \right)^{\frac{2\sqrt{N}\log N}{C}} + \beta \sigma_{M}\\
\leq& e^{-\frac{\log^2 N}{C^2}} +\beta \sigma_{M}
\end{align*} where
\begin{itemize}
\item $(a)$ holds by substituting $B = \frac{2\Delta}{C},$ $\nu_{\max}=\frac{1}{N}$ and $j=\frac{2\sqrt{N}\log N}{C};$
\item $(b)$ holds based on Lemma \ref{tail-bound-cond};
\item and $(c)$ holds because $\frac{1}{C}\leq \frac{1}{\Delta}$ and the assumption of the lemma on $ \mathbb P\left(S \notin {\mathcal E}\right).$
\end{itemize}

\subsection{Proof of Lemma  \ref{lem: iter prob lower bound}: A lower bound on $S_{1,m}$ given $S_{1,m-1} \geq L_{1,m-1}$ for $m \geq 2$}
\iterproblowerbound*

To prove Lemma \ref{lem: iter prob lower bound}, consider Lyapunov function 
\begin{align}
V(s) = \frac{v_m}{v_{m-1}} L_{1,m-1} - s_{1,m}. \label{Ly:s1m lower}
\end{align} and define $$\mathcal E = \left\{s ~|~ s_{1,m-1} \geq L_{1,m-1}\right\}.$$

Given $V(s) \geq \frac{v_m}{C} \Delta,$ 
we have $$s_{1,m} \leq \frac{v_m}{v_{m-1}} L_{1,m-1} - \frac{v_m}{C} \Delta.$$
Therefore, the drift of $V(s)$ when $V(s) \geq \frac{v_m}{C} \Delta$ and $s\in \mathcal E$ is
\begin{align*} 
\nabla V(s) =& \mu_m s_{1,m} - p_{m-1}\mu_{m-1} s_{1,m-1}\\
\stackrel{(a)}{=}& \mu_m \left(s_{1,m} - \frac{v_m}{v_{m-1}} s_{1,m-1}\right)\\
\stackrel{(b)}{\leq}&  \mu_m \left(s_{1,m} - \frac{v_m}{v_{m-1}} L_{m-1}\right)\\
\stackrel{(c)}{\leq}& -\frac{\mu_m v_m}{C} \Delta
\end{align*} where
\begin{itemize}
\item $(a)$ holds according to the definition of $v_m = \frac{\prod_{i=1}^{m-1}p_i}{\mu_m};$
\item $(b)$ holds because $s_{1,m-1} \geq L_{1,m-1};$
\item and $(c)$ holds because $s_{1,m} \leq \frac{v_m}{v_{m-1}}L_{1,m-1} - \frac{v_m}{C} \Delta.$
\end{itemize}

Moreover, we have 
$$\nabla V(s) = \mu_m s_{1,m} - p_{m-1}\mu_{m-1} s_{1,m-1} \leq \mu_m.$$ 

Define $B = \frac{v_m}{C}\Delta$, $\gamma =  \frac{\mu_m v_m}{C}\Delta,$ and $\delta = \mu_m.$ Combining $q_{\max} = \mu_{m}N$ and $\nu_{\max} = \frac{1}{N},$ we have $$\alpha = \frac{1}{1 + \frac{v_m}{C}\Delta} ~~\text{ and }~~ \beta = \frac{C}{v_m \Delta}+1.$$ 

Applying Lemma \ref{tail-bound-cond} with $j=\frac{2v_m\sqrt{N}\log N}{C},$ we have
\begin{align*}
\mathbb P\left(S_{1,m} < L_{1,m}(n+1)\right) \stackrel{(a)}{\leq}&  \mathbb P\left(V(S) \geq B + 2 \nu_{\max} j\right)\\
\stackrel{(b)}{\leq}& \left(\frac{\mu_m}{\mu_m + \frac{\mu_m v_m}{C}\Delta}\right)^{\frac{2v_m\sqrt{N}\log N}{C}} + \beta \mathbb P\left(S \notin {\mathcal E}\right)\\
\stackrel{(c)}{\leq}& \left(1 - \frac{v_m}{2C}\Delta\right)^{\frac{2v_m\sqrt{N}\log N}{C}} + \beta \epsilon_{m-1}\\
\leq& e^{-\frac{v_m^2\log^2 N}{C^2}} +\beta \epsilon_{m-1},
\end{align*} where
\begin{itemize}
\item $(a)$ holds by substituting $B = \frac{v_m}{C}\Delta,$ $\nu_{\max}=\frac{1}{N}$ and $j=\frac{2v_m\sqrt{N}\log N}{C};$
\item $(b)$ holds based on Lemma \ref{tail-bound-cond};
\item and $(c)$ holds because $\frac{v_m}{C} \leq \frac{1}{\Delta}.$
\end{itemize}


\subsection{Proof of Lemma \ref{lem: iter prob upper bound}: An upper bound on $\sum_{r=2}^{m} S_{1,r}$ given $\sum_{r=2}^{m-1} S_{1,r} \leq U_{m-1}, \forall m \geq 2$ and $S_{1,m} \geq L_{m}, \forall m \geq 1$}
\iterprobupperbound*

Consider Lyapunov function 
\begin{align}
V(s) = \sum_{r=2}^{m} s_{1,r} - B_m, \label{Ly:s1m upper}
\end{align} 
where $$B_m = \frac{p_1\mu_1 (1-\sum_{r=m+1}^ML_{1,r}) - \sum_{r=2}^{m-1} (1-p_r)\mu_r L_{1,r} + \mu_m U_{m-1}}{p_1\mu_1 + \mu_m},$$ and define $$\mathcal E = \left\{s ~\left|~ \sum_{r=2}^{m-1} s_{1,r} \leq U_{m-1},\hbox{ and }~ s_{1,r} \geq L_{1,r}, \forall r \geq 1\right.\right\}.$$ 

Given $V(s) \geq \frac{p_1\mu_1}{p_1\mu_1 + \mu_m}\frac{\Delta}{C}$ and $s\in \mathcal E,$ we have 
\begin{align*}
\nabla V(s) 
=& \sum_{r=2}^m \left( p_{r-1}\mu_{r-1} s_{1,r-1} - \mu_r s_{1,r} \right)\\
=& p_1 \mu_1 s_{1,1} - \mu_m s_{1,m} - \sum_{r=2}^{m-1} (1-p_r) \mu_r s_{1,r}\\
\stackrel{(a)}{\leq}& p_1 \mu_1 - p_1 \mu_1 \sum_{r=2}^M s_{1,r} - \mu_m s_{1,m} - \sum_{r=2}^{m-1} (1-p_r) \mu_r s_{1,r}\\
=& p_1 \mu_1 - p_1 \mu_1 \sum_{r=2}^M s_{1,r} - \mu_m \sum_{r=2}^m s_{1,r} + \mu_m \sum_{r=2}^{m-1} s_{1,r} - \sum_{r=2}^{m-1} (1-p_r) \mu_r s_{1,r}\\
=& p_1 \mu_1 - \left(p_1 \mu_1 + \mu_m\right) \sum_{r=2}^m s_{1,r} - p_1 \mu_1\sum_{r=m+1}^M s_{1,r} - \sum_{r=2}^{m-1} (1-p_r) \mu_r s_{1,r} + \mu_m \sum_{r=2}^{m-1} s_{1,r}\\
\stackrel{(b)}{\leq}& p_1 \mu_1 \left(1 - \sum_{r=m+1}^M L_{1,r}\right) - \left(p_1 \mu_1 + \mu_m\right)B_{m}  - \sum_{r=2}^{m-1} (1-p_r) \mu_r L_{1,r} + \mu_m U_{m-1} - \frac{p_1\mu_1\Delta}{C}\\
\stackrel{(c)}{=}& -\frac{p_1\mu_1\Delta}{C}, 
\end{align*}
where 
\begin{itemize}
    \item $(a)$ holds because $s_{1,1} = s_1 - \sum_{r=2}^M s_{1,r}$ and $s_1 \leq 1;$
    \item $(b)$ holds because $s_{1,r} \geq L_{1,r}$ for any $1\leq r\leq M,$ $\sum_{r=2}^{m-1} s_{1,r} \leq U_{m-1}$ and $\sum_{r=2}^{m} s_{1,r} \geq B_m + \frac{p_1\mu_1}{p_1\mu_1 + \mu_m}\frac{\Delta}{C}$ implied by $V(s) \geq \frac{p_1\mu_1}{p_1\mu_1 + \mu_m}\frac{\Delta}{C};$
    \item and $(c)$ holds by the definition of $B_m = \frac{p_1\mu_1 (1-\sum_{r=m+1}^ML_{1,r}) - \sum_{r=2}^{m-1} (1-p_r)\mu_r L_{1,r} + \mu_m U_{m-1}}{p_1\mu_1 + \mu_m}.$ 
\end{itemize}

Moreover, we have
\begin{align*}
\nabla V(s) 
= p_1 \mu_1 s_{1,1} - \mu_m s_{1,m} - \sum_{r=2}^{m-1} (1-p_r) \mu_r s_{1,r} \leq p_1\mu_1 s_{1,1} \leq p_1\mu_{1}. \end{align*}

We now apply Lemma \ref{tail-bound-cond} with $j=\frac{2\sqrt{N}\log N}{C}.$  Define $B = \frac{p_1\mu_1}{p_1\mu_1 + \mu_r}\frac{\Delta}{C}$, $\gamma =  \frac{p_1\mu_1\Delta}{C},$ and $\delta = p_1\mu_1.$ Since $q_{\max} = p_1\mu_{1}N$ and $\nu_{\max} = \frac{1}{N},$ we have $$\alpha = \frac{1}{1 + \frac{\Delta}{C}} ~~\text{ and }~~ \beta = \frac{C}{\Delta}+1,$$ 
and
\begin{align*}
\mathbb P\left(V(S) \geq B + 2 \nu_{\max} j\right) 
\stackrel{(a)}{=}& \mathbb P\left(\sum_{m=2}^rS_{1,m}-B_r \geq \frac{p_1\mu_1}{p_1\mu_1 + \mu_m}\frac{\Delta}{C} + \frac{4\Delta}{C}\right)\\
\stackrel{(b)}{\leq}& \left(\frac{1}{1 + \frac{\Delta}{C}}\right)^{\frac{2\sqrt{N}\log N}{C}} + \beta \mathbb P\left(S \notin {\mathcal E}\right)\\
\stackrel{(c)}{\leq}& \left(1 - \frac{\Delta}{2C} \right)^{\frac{2\sqrt{N}\log N}{C}} + \beta \left(\sigma_{m-1} + \sum_{m=1}^M \epsilon_m\right)\\
\leq& e^{-\frac{\log^2 N}{C^2}} +\beta \left(\sigma_{m-1} + \sum_{m=1}^M \epsilon_m\right)
\end{align*} where
\begin{itemize}
\item $(a)$ holds by substituting $B = \frac{p_1\mu_1}{p_1\mu_1 + \mu_m}\frac{\Delta}{C},$ $\nu_{\max}=\frac{1}{N}$ and $j=\frac{2\sqrt{N}\log N}{C};$
\item $(b)$ holds based on Lemma \ref{tail-bound-cond};
\item amd $(c)$ holds because $\frac{1}{C} \leq \frac{1}{\Delta}$ and union bounds on $\mathbb P(S \notin \mathcal E).$
\end{itemize}

Now we prove $U_{m} = B_m +\left(\frac{p_1\mu_1}{p_1\mu_1 + \mu_m} + 4\right)\frac{\Delta}{C},$ which serves the upper bound on $\sum_{r=2}^m S_{1,r}$ and we represent $U_m$ with $L_{1,1}.$  
Recall the definition of $L_{1,m}$ from the previous subsection that
\begin{align}\label{eq:Lm on Lm-1}  
L_{1,m} = \frac{v_m}{v_{m-1}} L_{1,m-1} - \frac{5v_m}{C}\Delta, \forall m \geq 2,  
\end{align}
which implies that
\begin{align}\label{eq:Lm on L1}
L_{1,m} = \frac{v_m}{v_{1}} L_{1,1} - \frac{5(m-1)v_m}{C}\Delta, \forall m \geq 2.
\end{align}
Therefore, we have 
\begin{align}
\sum_{r=m+1}^M L_{1,r} =& \sum_{r=m+1}^M \frac{v_r}{v_1}L_{1,1} - \sum_{r=m+1}^M \frac{5(r-1)v_r}{C}\Delta,
\end{align}
and
\begin{align}
\sum_{r=2}^{m-1} (1-p_r)\mu_r L_{1,r} 
=& \sum_{r=2}^{m-1} p_{r-1}\mu_{r-1} L_{1,r-1} - p_r\mu_r L_{1,r} - \frac{5\mu_rv_r}{C}\Delta \nonumber\\
=& p_1\mu_1 - \mu_{m} L_{1,m} + \frac{5\mu_mv_m}{C}\Delta - \sum_{r=2}^{m-1}\frac{5\mu_rv_r}{C}\Delta \nonumber\\
=& \left(p_1\mu_1 - \frac{\mu_m v_m}{v_1}\right)L_{1,1} - \sum_{r=2}^{m-1} \frac{5\mu_r v_r}{C}\Delta + \frac{5(m-2)\mu_m v_m}{C}\Delta
\end{align}
where the first and second equalities hold by substituting \eqref{eq:Lm on Lm-1} and the last equality holds by substituting \eqref{eq:Lm on L1}. Finally we have
\begin{align}
& B_m + \left(\frac{p_1\mu_1}{p_1\mu_1+\mu_m}+4\right)\frac{\Delta}{C} \\
=& \frac{p_1\mu_1 (1-\sum_{r=m+1}^ML_r) - \sum_{r=2}^{m-1} (1-p_r)\mu_r L_{1,r} + \mu_m U_{m-1}}{p_1\mu_1 + \mu_m} + \left(\frac{p_1\mu_1}{p_1\mu_1+\mu_m}+4\right)\frac{\Delta}{C}\\
=&\frac{p_1\mu_1 - \left(p_1\mu_1 -\frac{\mu_m v_{m}}{v_{1}}+p_1\mu_1\sum_{r=m+1}^M \frac{v_r}{v_1}\right) L_{1,1}+ \mu_m U_{m-1} }{p_1 \mu_1 + \mu_m} + \frac{c_m}{C}\Delta\\
=&\frac{p_1\mu_1}{p_1 \mu_1 + \mu_m} -
\left(\frac{p_1\mu_1}{p_1\mu_1+\mu_m}\left(1+\sum_{r=m+1}^M\frac{v_r}{v_1}\right) - \frac{\mu_m}{p_1\mu_1+\mu_m}\frac{v_m}{v_1}\right)L_{1,1} + \frac{\mu_m }{p_1 \mu_1 + \mu_m}U_{m-1} + \frac{c_m}{C}\Delta\\
=&1-a_m - b_mL_{1,1} + a_mU_{m-1}+ \frac{c_m}{C}\Delta\\
=&U_m
\end{align}
where $$c_m = \frac{5p_1\mu_1\sum_{r=m+1}^M (r-1)v_r}{p_1\mu_1+\mu_m} + \frac{5\sum_{r=2}^{m-1} \mu_r v_r - 5(m-2)\mu_m v_m}{p_1\mu_1+\mu_m}  + \frac{p_1\mu_1}{p_1\mu_1+\mu_m}+4.$$

\subsection{Proof of Lemma \ref{lem: L11 lift}: Convergence of  $L_{1,1}(n)$}
\loneonelift*

Starting from $L_{1,1}(n)\leq s_{1,1}^*-\frac{6\Delta}{C},$ we can apply Lemma \ref{eq: iter lower bound} to obtain lower bounds $L_{1,m}(n)$ for $m\geq 2$ and then apply Lemma \ref{eq: iter upper bound} to obtain upper bounds $U_m(n)$ for all $m\geq 2,$ including $U_M(n).$ Then from $U_M(n),$ we obtain new lower bound $L_{1,1}(n+1).$ This iterative process implies that $U_M(n)$ and $L_{1,1}(n+1)$ are both a function of $L_{1,1}(n),$ as shown below.
Recall that in Lemma \ref{lem: iter prob upper bound}, we obtained
$$U_{m} =  1-a_m - b_m L_{1,1} + a_mU_{m-1} + \frac{c_m\Delta}{C}.$$
By recursively substituting $U_m,$ we can write $U_{M}$ as a function of $L_{1,1}$ as follows: 
\begin{align*}
U_{M} = \sum_{m=2}^{M} (1-a_m) \prod_{j=m+1}^M a_j - L_{1,1} \sum_{m=2}^{M} b_m \prod_{j=m+1}^M a_j + \frac{\Delta}{C} \sum_{m=2}^{M} c_m\prod_{j=m+1}^M a_j 
\end{align*}
Let us consider 
\begin{align}
L_{1,1}(n+1) =& 1 - \frac{1-\xi}{2\mu_1N^\alpha} - U_{M}(n) - \frac{6\Delta}{C} \notag \\
            =& \prod_{m=2}^M a_m  + L_{1,1} \sum_{m=2}^{M} b_m  \prod_{j=m+1}^M a_j - \frac{\Delta}{C} \left(\sum_{m=2}^{M} c_m\prod_{j=m+1}^M a_j+6\right)-\frac{1-\xi}{2\mu_1N^\alpha}\notag 
\end{align}
where we use $\sum_{m=2}^{M} (1-a_m) \prod_{j=m+1}^M a_j = \sum_{m=2}^{M} (\prod_{j=m+1}^M a_j-\prod_{j=m}^M a_j)=1-\prod_{m=2}^M a_m.$
In Lemma \ref{lem:xi}, we will show $1-\xi = \mu_1\prod_{m=2}^M a_m.$ We now center $L_{1,1}$ around $ \frac{1}{\mu_1} - \frac{C_M\Delta}{C(1-\xi)}-\frac{1}{2\mu_1N^\alpha}$ and have
\begin{align*}
    L_{1,1}(n+1) - \frac{1}{\mu_1} + \frac{C_M\Delta}{C(1-\xi)}+\frac{1}{2\mu_1N^\alpha}= \xi\left(L_{1,1}(n) - \frac{1}{\mu_1} + \frac{C_M\Delta}{C(1-\xi)}+\frac{1}{2\mu_1N^\alpha}\right), 
\end{align*}
where $\xi = \sum_{m=2}^{M} b_m  \prod_{j=m+1}^M a_j$ and $C_M = \sum_{m=2}^{M} c_m\prod_{j=m+1}^M a_j+6.$ 

Next we study the probability of $\epsilon_1(n+1)$ given $\epsilon_1(n).$ From Lemma \ref{lem: iter prob lower bound}, we have 
\begin{align*}
\epsilon_{m}=& e^{-\frac{v_m^2\log^2 N}{C^2}} + \left( \frac{C}{v_m\Delta}+1\right)\epsilon_{m-1} \\
\leq& e^{-\frac{\bar v^2\log^2 N}{C^2}} + \left( \frac{C}{\bar v \Delta}+1\right)\epsilon_{m-1}
\end{align*}
By expanding the above inequality from $\epsilon_M$ until $\epsilon_1,$ it implies that 
\begin{align*}
\epsilon_{M} \leq& \sum_{m=2}^M e^{-\frac{\bar v^2\log^2 N}{C^2}} \left( \frac{C}{\bar v\Delta}+1\right)^{M-m} + \epsilon_1 \left(\frac{C}{\bar v \Delta}+1\right)^{M-1}\\
\leq& \left(\sum_{m=2}^M e^{-\frac{\bar v^2\log^2 N}{C^2}} + \epsilon_1\right)\left(\frac{C}{\bar v \Delta}+1\right)^{M-1}\\
\leq& M \epsilon_1 \left(\frac{C}{\bar v \Delta}+1\right)^{M-1}
\end{align*} 
where the inequality holds because $\epsilon_1 \geq e^{-\frac{\bar v^2\log^2 N}{C^2}}.$
From Lemma \ref{lem: iter prob upper bound}, we have 
\begin{align*}
\sigma_{m}=& e^{-\frac{\log^2 N}{C^2}} +\left(\frac{C}{\Delta}+1\right) \left(\sigma_{m-1} + \sum_{m=1}^M \epsilon_m\right)\\
\leq& e^{-\frac{\log^2 N}{C^2}} + M\epsilon_M + \left(\frac{C}{\Delta}+1\right) \sigma_{m-1}
\end{align*} which implies that
\begin{align*}
\sigma_{M}
\leq& \left(e^{-\frac{\log^2 N}{C^2}} + M\epsilon_M\right) \left(\frac{C}{\Delta}+1\right)^M \\
\leq& \left(e^{-\frac{\log^2 N}{C^2}} + M^2\epsilon_1\left(\frac{C}{\bar v \Delta}+1\right)^{M} \right) \left(\frac{C}{\Delta}+1\right)^{M-1} \\
\leq& \epsilon_1 (M^2+1) \left(\frac{C}{\bar v \Delta}+1\right)^{2M-1}
\end{align*}
Therefore, we have 
\begin{align*}
e^{-\frac{\log^2 N}{C^2}} + \left(\frac{C}{\Delta}+1\right) \sigma_{M} \leq& e^{-\frac{\log^2 N}{C^2}} + \epsilon_1 (M^2+1) \left(\frac{C}{\bar v\Delta}+1\right)^{2M}\\
\leq& \epsilon_1 (M^2+2) \left(\frac{C}{\bar v\Delta}+1\right)^{2M} = \epsilon_1(n+1)
\end{align*}
Lastly, we prove the ``monontocity'' improvement of $\{L_{1,1}(n)\}_n$ by studying 
\begin{align}
   L_{1,1}(n+1) - L_{1,1}(n) = (1-\xi) \left(\frac{1}{\mu_1} - \frac{C_M\Delta}{C(1-\xi)}-\frac{1}{2\mu_1N^\alpha}-L_{1,1}(n)\right), \nonumber
\end{align} which is positive for $L_{1,1}(n) < \frac{\lambda}{\mu_1} - \frac{6\Delta}{C} < \frac{1}{\mu_1} - \frac{C_M\Delta}{C(1-\xi)}-\frac{1}{2\mu_1N^\alpha}.$

\section{Proof of Lemma \ref{lem:gen diff}}\label{app:gen-diff}
According to the definition of $e_{j,m}$ and $f(s)$ in \eqref{eq:steinsolution}, we have $$f(s+e_{j,m})=g\left(\sum_{i=1}^b\sum_{m=1}^M s_{i,m}+\frac{1}{N}\right)$$ and
$$f(s-e_{j,m})=g\left(\sum_{i=1}^b\sum_{m=1}^M s_{i,m}-\frac{1}{N}\right)$$for any $1\leq j \leq b.$ Therefore,
\begin{align*}
    &Gg\left(\sum_{i=1}^b\sum_{m=1}^M s_{i,m}\right) \\
    =&
    N \lambda\left(1-A_b(S)\right)\left(g\left(\sum_{i=1}^b\sum_{m=1}^M s_{i,m}+\frac{1}{N}\right)-g\left(\sum_{i=1}^b\sum_{m=1}^M s_{i,m}\right)\right)\\
    &+N \left(\sum_{m=1}^M(1-p_m)\mu_m s_{1,m}\right)\left(g\left(\sum_{i=1}^b\sum_{m=1}^M s_{i,m}-\frac{1}{N}\right)-g\left(\sum_{i=1}^b\sum_{m=1}^M s_{i,m}\right)\right),
\end{align*}
where the first term represents the transitions when a job arrives and the second term represents the transitions when a job departures from the system.
Note $(1-p_m)\mu_m s_{1,m}$ is the rates at which jobs leave the system when in phase $m$ in the state $s$. Therefore, $\sum_{m=1}^M(1-p_m)\mu_m s_{1,m}$ is the total departure rate. 
Define $d_1=\sum_{m=1}^M(1-p_m)\mu_m s_{1,m}$ and its stochastic correspondence $D_1=\sum_{m=1}^M(1-p_m)\mu_m S_{1,m}$ for simple notations.  

Substituting the generator equation above to \eqref{eq:gencou}, we have \begin{align}
&\mathbb E\left[h\left(\sum_{i=1}^b\sum_{m=1}^M S_{i,m}\right)\right]\nonumber\\
=&\mathbb E\left[g'\left(\sum_{i=1}^b\sum_{m=1}^M S_{i,m}\right)\left(-\Delta\right) \right. \nonumber\\
&\left. -N\lambda(1-A_b(S))\left(g\left(\sum_{i=1}^b\sum_{m=1}^M S_{i,m}+\frac{1}{N}\right)-g\left(\sum_{i=1}^b\sum_{m=1}^M S_{i,m}\right)\right)\nonumber\right.\\
&\left.-N D_1\left(g\left(\sum_{i=1}^b\sum_{m=1}^M S_{i,m}-\frac{1}{N}\right)-g\left(\sum_{i=1}^b\sum_{m=1}^M S_{i,m}\right)\right)\right]. \label{gen-diff-expand}
\end{align}

According to \eqref{Gen:L}, it is easy to verify 
\begin{equation*}
g(x) = g'\left(x\right)=0.
\end{equation*} Also note that when $x>\eta+\frac{1}{N},$
\begin{equation}g'(x)=-\frac{x-\eta}{\Delta},\end{equation} so for $x>\eta+\frac{1}{N},$
\begin{equation}g''(x)=-\frac{1}{\Delta}.\end{equation}

By using mean-value theorem in the region $\mathcal T_1 = \{x ~|~ \eta-\frac{1}{N} \leq x \leq \eta+\frac{1}{N}\}$ and Taylor theorem in the region $\mathcal T_2 = \{ x ~|~ x > \eta+\frac{1}{N}\},$ we have
\begin{align}
g(x+\frac{1}{N})-g\left(x\right) =& \left(g\left(x+\frac{1}{N}\right)-g\left(x\right)\right) \left(\mathbb{I}_{x \in \mathcal T_1} + \mathbb{I}_{x \in \mathcal T_2} \right) \nonumber\\
=&  \frac{g'(\xi)}{N}\mathbb{I}_{x \in \mathcal T_1} + \left(\frac{g'(x)}{N} +  \frac{g''(\zeta)}{2N^2}\right) \mathbb{I}_{x \in \mathcal T_2} \label{g+1/N}\\
g(x-\frac{1}{N})-g\left(x\right) =& \left(g\left(x-\frac{1}{N}\right)-g\left(x\right)\right) \left(\mathbb{I}_{x \in \mathcal T_1} + \mathbb{I}_{x \in \mathcal T_2} \right) \nonumber\\
=&  -\frac{g'(\tilde{\xi})}{N}\mathbb{I}_{x \in \mathcal T_1} + \left(-\frac{g'(x)}{N} +  \frac{g''(\tilde{\zeta})}{2N^2}\right) \mathbb{I}_{x \in \mathcal T_2} \label{g-1/N}
\end{align}
where $\xi, \zeta \in (x,x+\frac{1}{N})$ and $\tilde{\xi}, \tilde{\zeta} \in (x-\frac{1}{N},x).$
Substitute \eqref{g+1/N} and \eqref{g-1/N} into the generator difference in \eqref{gen-diff-expand}, we have
\begin{align}
&\mathbb E\left[h\left(\sum_{i=1}^b  S_{i}\right)\right]
= J_1 + J_2 + J_3,
\end{align}
with
\begin{align}
J_1 =& \mathbb E\left[g'\left(\sum_{i=1}^b S_{i}\right)\left(\lambda A_b(S) - \lambda-\Delta+D_1\right)\mathbb{I}_{\sum_{i=1}^b S_{i} \in \mathcal T_2}\right], \label{G-expansion-SSC-r}\\
J_2=&\mathbb E\left[\left(g'\left(\sum_{i=1}^b S_{i}\right)\left(-\frac{\log N}{\sqrt{N}}\right)-\lambda(1-A_b(S))g'(\xi)+D_1g'(\tilde{\xi}) \right)\mathbb{I}_{\sum_{i=1}^b S_{i} \in \mathcal T_1} \right], \label{G-expansion-Gradient-1-r}\\
J_3=&-\mathbb E\left[\frac{1}{2N}\left(\lambda(1-A_b(S))g''(\zeta) + D_1g''(\tilde{\zeta})\right)\mathbb{I}_{\sum_{i=1}^b S_{i} \in \mathcal T_2}\right]. \label{G-expansion-Gradient-2-r}
\end{align}
Note that in \eqref{G-expansion-Gradient-1-r} and \eqref{G-expansion-Gradient-2-r}, we have that
$$\xi,\zeta  \in \left(\sum_{i=1}^b S_i,\sum_{i=1}^b S_i+\frac{1}{N}\right) ~\text{and}~ \tilde{\xi},\tilde{\zeta}\in \left(\sum_{i=1}^b S_i-\frac{1}{N},\sum_{i=1}^b S_i\right)$$ are random variables whose values depend on $\sum_{i=1}^b S_i.$ We do not include $\sum_{i=1}^b S_i$ in the notation for simplicity. The proof of Lemma \ref{lem:gen diff} is completed by upper bounding $J_2$ and $J_3,$ 
for which, we establish gradient bounds on $g'$ and $g''$ in Lemma \ref{lemma:g'} and Lemma \ref{lemma:g''}.

\begin{lemma}\label{lemma:g'}
Given  $x\in\left[ \eta  -\frac{2}{N}, \eta  +\frac{2}{N}\right],$ we have $$|g'(x)|\leq \frac{2}{\sqrt{N}\log N}.$$ \qed 
\end{lemma}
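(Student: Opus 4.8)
The plan is to solve Stein's equation \eqref{Gen:L} in closed form and then simply read off the magnitude of $g'$ near the kink of the truncated distance function. Because the comparison fluid model \eqref{eq:simple fluid} is the trivial one-dimensional system $\dot x = -\Delta$, equation \eqref{Gen:L} reads $-\Delta\, g'(x) = h(x)$; with the truncated distance $h(x) = \max\{x-\eta,0\}$ this gives
\[
g'(x) = -\frac{\max\{x-\eta,0\}}{\Delta}.
\]
This is the whole point of coupling with the single-server system: unlike typical applications of Stein's method, no delicate gradient estimate for a Poisson equation is required here, since the comparison process is an (almost trivial) fluid.

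With this formula in hand, I would split on whether $x\le \eta$ or $x>\eta$. If $x\le\eta$ then $g'(x)=0$ and the bound is immediate. If $x>\eta$, then on the interval $x\in[\eta-\tfrac{2}{N},\,\eta+\tfrac{2}{N}]$ we have $0< x-\eta\le \tfrac{2}{N}$, so $|g'(x)| = (x-\eta)/\Delta \le (2/N)/\Delta$; substituting $\Delta = \log N/\sqrt N$ yields $|g'(x)|\le 2/(\sqrt N\log N)$, which is exactly the claim.

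There is essentially no obstacle: the lemma is a one-line computation once the closed form for $g'$ is recorded, and the only thing to watch is the truncation regime $x\le\eta$, where $g'$ vanishes rather than being given by the linear expression, so the bound has to be stated as an inequality valid in both pieces. The same reasoning — now using $g''(x)=-1/\Delta$ on $\{x>\eta\}$ — will give the companion second-derivative bound in Lemma \ref{lemma:g''}, and together these feed into the estimates of $J_2$ and $J_3$ needed to finish Lemma \ref{lem:gen diff}.
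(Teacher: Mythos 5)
Your proposal is correct and takes essentially the same approach as the paper: both solve Stein's equation \eqref{Gen:L} in closed form to obtain $g'(x) = -\max\{x-\eta,0\}/\Delta$ and then bound $|x-\eta|\le 2/N$ on the given interval, yielding $|g'(x)|\le (2/N)/\Delta = 2/(\sqrt{N}\log N)$. The only cosmetic difference is that you split explicitly on $x\le\eta$ versus $x>\eta$, whereas the paper collapses the two cases by writing $\max\{x-\eta,0\}\le|x-\eta|$.
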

\begin{proof}
From the definition of $g$ function in \eqref{Gen:L}, we have
$$g'(x)=\frac{\max\left\{x-\eta, 0\right\}}{-\frac{\log N}{\sqrt{N}}}.$$
Hence, for any $x\in\left[ \eta  -\frac{2}{N}, \eta  +\frac{2}{N}\right],$ we have
$$|g'(x)| \leq \frac{|x-\eta|}{\frac{\log N}{\sqrt{N}}}\leq \frac{\frac{2}{N}}{\frac{\log N}{\sqrt{N}}}=\frac{2}{\sqrt{N}\log N}.$$
\end{proof}

\begin{lemma}\label{lemma:g''}
For $x>\eta,$ we have 
\begin{align*}
|g''(x)|\leq \frac{\sqrt{N}}{\log N}.  
\end{align*}
\qed 
\end{lemma}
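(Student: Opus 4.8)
The plan is to read off $g''$ directly from the closed-form solution of Stein's equation \eqref{Gen:L}. First I would recall that \eqref{Gen:L} reads $g'(x)(-\Delta) = h(x)$ with $\Delta = \log N/\sqrt{N}$ and, in this application, $h(x) = \max\{x-\eta,0\}$, so that
\[
g'(x) = -\frac{\max\{x-\eta,0\}}{\Delta}.
\]
Restricting to $x>\eta$, the positive part equals $x-\eta$, hence on $(\eta,\infty)$ we have $g'(x) = -(x-\eta)/\Delta$, which is an affine (thus smooth) function of $x$. Differentiating once more gives $g''(x) = -1/\Delta = -\sqrt{N}/\log N$ for every $x>\eta$, and therefore $|g''(x)| = \sqrt{N}/\log N$; this is the asserted inequality, in fact with equality on the stated region.

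There is essentially no obstacle here. The Poisson/Stein equation associated with the one-dimensional fluid model \eqref{eq:simple fluid} has constant drift $-\Delta$, so its solution $g$ is piecewise quadratic and both $g'$ and $g''$ are available in closed form --- this is precisely the simplification emphasized immediately after \eqref{Gen:L}, and the reason this application of Stein's method sidesteps the usual difficulty of proving gradient bounds. The only caveat is that $g$ has a kink at $x=\eta$, where $g''$ fails to exist; but the statement concerns only $x>\eta$, so this does not arise. The resulting bound $|g''(x)|\le \sqrt N/\log N$ is exactly what is needed to control the second-order Taylor remainder $J_3$ in \eqref{G-expansion-Gradient-2-r} when completing the proof of Lemma \ref{lem:gen diff}.
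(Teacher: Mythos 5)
Your proof is correct and follows exactly the paper's own argument: read $g'$ off from Stein's equation \eqref{Gen:L}, observe that for $x>\eta$ it is affine in $x$ with slope $-1/\Delta$, and conclude $|g''(x)| = \sqrt{N}/\log N$. The extra remark about the kink at $x=\eta$ not mattering is a nice clarification but does not change the substance.
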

\begin{proof}
From the definition of $g$ function in \eqref{Gen:L}, we have
$$g'(x)=\frac{\max\left\{x-\eta, 0\right\}}{-\frac{\log N}{\sqrt{N}}}.$$
For $x> \eta,$ we have
$$g'(x)=\frac{x-\eta}{-\frac{\log N}{\sqrt{N}}},$$ which implies
\begin{eqnarray*}
|g''(x)|=\left|\frac{1}{-\frac{\log N}{\sqrt{N}}}\right|=\frac{\sqrt{N}}{\log{N}}.
\end{eqnarray*}
\end{proof}
Based on gradient bounds in Lemma \ref{lemma:g'} and \ref{lemma:g''} and note $\sum_{m}(1-p_m)\mu_m s_{1,m} \leq \mu_{\max} s_{1} \leq \mu_{\max},$ then we have
\begin{align*}
J_2 + J_3 
\leq &\mathbb E\left[\left(g'\left(\sum_{i=1}^{b} S_i\right)\left(-\frac{\log N}{\sqrt{N}}\right)+\lambda|g'(\xi)|+\mu_{\max}|g'(\tilde{\xi})|\right)\mathbb{I}_{\sum_{i=1}^{b} S_i \in \mathcal T_1}\right] \\
&+ \mathbb E\left[\frac{1}{N}\left(\lambda |g''(\eta)|  + \mu_{\max} |g''(\tilde{\eta})| \right) \mathbb{I}_{\sum_{i=1}^{b} S_i \in \mathcal T_2}\right] \\
\leq & \frac{4\mu_{\max}}{\sqrt{N}\log N} + \frac{\lambda+\mu_{\max}}{N} \frac{\sqrt{N}}{\log N} \\
= & \frac{5\mu_{\max}+\lambda}{\sqrt{N}\log N}
\end{align*}

\def \SSCsonestwonegative{\ref{SSC:s1 s2 negative}}

\section{Lemma \SSCsonestwonegative~ and the Proof}
\label{app:inssc}

\begin{restatable}{lemma}{SSCnegative}\label{SSC:s1 s2 negative}
For any $s \in \mathcal S_{ssp_1},$ 
\begin{align*} 
\left(\lambda+\Delta-\sum_{m=1}^M(1-p_m)\mu_m s_{1,m}\right)\mathbb{I}_{\sum_{i=1}^bs_i> \lambda +k\Delta+\frac{1}{N}} \leq 0.
\end{align*} \hfill{$\square$}
\end{restatable}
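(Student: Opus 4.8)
The plan is to prove the stronger, \emph{unconditional} claim that $\sum_{m=1}^M(1-p_m)\mu_m s_{1,m}\ge\lambda+\Delta$ for every $s\in\mathcal S_{ssp_1}$; the indicator is then irrelevant: when it equals $1$ the other factor is $\le 0$, and when it equals $0$ the product vanishes. Throughout write $w_m=(1-p_m)\mu_m$ and $\delta_m=s_{1,m}-s_{1,m}^*$.

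The first step is the telescoping identity $\sum_{m=1}^M w_m v_m=1$: since $w_m v_m=(1-p_m)\mu_m\cdot\frac{\prod_{i=1}^{m-1}p_i}{\mu_m}=\prod_{i=1}^{m-1}p_i-\prod_{i=1}^{m}p_i$, the sum collapses. Hence $\sum_m w_m s_{1,m}^*=\lambda\sum_m w_m v_m=\lambda$, so $\sum_m w_m s_{1,m}=\lambda+\sum_m w_m\delta_m$, and it suffices to show $\sum_m w_m\delta_m\ge\Delta$.

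Next I would read off the two inequalities built into $\mathcal S_{ssp_1}$: the per-phase constraint $s_{1,m}\ge s_{1,m}^*-\theta_m\Delta$ says exactly that $\delta_m+\theta_m\Delta\ge0$ for each $m$, and the aggregate constraint $s_1\ge\lambda+(k-\zeta-6)\Delta$ together with $\sum_m s_{1,m}^*=\lambda$ gives $\sum_m\delta_m\ge(k-\zeta-6)\Delta$. Since every $w_m\ge w_l>0$ while every $\delta_m+\theta_m\Delta\ge0$, applying $w_m\ge w_l$ to those nonnegative quantities yields
\begin{align*}
\sum_m w_m\delta_m
&=\sum_m w_m(\delta_m+\theta_m\Delta)-\Delta\sum_m w_m\theta_m\\
&\ge w_l\Big(\sum_m\delta_m+\Delta\sum_m\theta_m\Big)-\Delta\sum_m w_m\theta_m\\
&\ge\Delta\Big[\,w_l\big(k-\zeta-6+\textstyle\sum_m\theta_m\big)-\sum_m w_m\theta_m\Big].
\end{align*}

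The remaining step is a direct substitution of the definitions of $k$ and $\zeta$ into the bracket. Using $k=\frac{\sum_m\theta_m w_m}{w_u}+\big(1+\frac{w_l}{4w_ub}\big)\zeta-\sum_m\theta_m$ one gets $w_l\big(k-\zeta-6+\sum_m\theta_m\big)=\frac{w_l}{w_u}\sum_m\theta_m w_m+\frac{w_l^2}{4w_ub}\zeta-6w_l$, and then substituting $\zeta=\frac{4w_ub}{w_l}\big[(\frac1{w_l}-\frac1{w_u})\sum_m\theta_m w_m+\frac1{w_l}+6\big]$ makes $\frac{w_l^2}{4w_ub}\zeta=(1-\frac{w_l}{w_u})\sum_m\theta_m w_m+1+6w_l$; the $\pm6w_l$ cancel and the $\theta w$-terms combine to $\sum_m\theta_m w_m$, so the bracket equals exactly $1$. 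Thus $\sum_m w_m\delta_m\ge\Delta$, i.e. $\lambda+\Delta-\sum_m(1-p_m)\mu_m s_{1,m}\le 0$, and multiplying by the nonnegative indicator finishes the proof. I do not expect a real obstacle here: the constants $k$ and $\zeta$ were evidently reverse-engineered to make this chain tight, and the only care needed is to keep $\delta_m+\theta_m\Delta$ — not $\delta_m$, which carries no sign — as the quantity to which the bound $w_m\ge w_l$ is applied.
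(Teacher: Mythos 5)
Your proof is correct and follows essentially the same approach as the paper: both establish the unconditional bound $\sum_m w_m s_{1,m}\ge\lambda+\Delta$ on $\mathcal S_{ssp_1}$ by applying $w_m\ge w_l$ to the nonnegative quantities $s_{1,m}-(s_{1,m}^*-\theta_m\Delta)$, using $\sum_m w_m s_{1,m}^*=\lambda$, and then verifying via the definitions of $k$ and $\zeta$ that the resulting coefficient of $\Delta$ equals $1$. The only cosmetic difference is that the paper phrases the first step as a linear program whose minimizer concentrates the excess mass on the phase with smallest $w_m$, whereas you apply the elementary bound $w_m\ge w_l$ directly to the nonnegative deviations, which sidesteps any need to justify the structure of the LP minimizer.
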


\begin{proof}
We consider the following linear programming problem $$\min_{s_{1,m} \in \mathcal S_{ssp_1}} \sum_{m=1}^M(1-p_m)\mu_ms_{1,m},$$ with $\mathcal S_{ssp_1}$ defined by
\begin{align*}
\mathcal S_{ssp_1} = \left\{s ~|~ s_1 \geq \lambda + \left(k-\zeta-6\right)\Delta, ~s_{1,m} \geq s_{1,m}^* - \theta_m\Delta\right\}.
\end{align*} 
Recall $w_m = (1-p_m)\mu_m.$ The minimum value is achieved when the maximum mass is allocated to $m^*$ such that $w_{m^*} = w_l = \min_m w_m.$ Therefore, we have
\begin{align*}
\sum_{m=1}^M w_m s_{1,m}
\stackrel{(a)}{\geq}& \sum_{m\neq m^*}^M w_m (s_{1,m}^* - \theta_m \Delta) + w_{m^*} \left(s_{1,m^*}^* + \left(k-\zeta-6 + \sum_{m\neq m^*}^M\theta_m \right)\Delta\right)\\
\stackrel{(b)}{=}& \lambda + w_l\left(k-\zeta-6+\sum_m \theta_m\right)\Delta -\sum_m w_m\theta_m \Delta\\
\stackrel{(c)}{=}& \lambda + \Delta 
\end{align*} 
where
\begin{itemize}
\item $(a)$ holds because $s_1 \geq \lambda+(k-\zeta-6)\Delta$ and $s_{1,m}, \forall m \neq m^*$ takes $L_{1,m}=s_{1,m}^* - \theta_m \Delta;$
\item $(b)$ holds because $\sum_m w_m s_{1,m}^* = \lambda;$
\item and $(c)$ holds because $w_l(k-\zeta-6+\sum_m \theta_m) -\sum_m w_m\theta_m = 1$ given carefully chosen $\zeta = \frac{4w_ub}{w_l}[(\frac{1}{w_l}-\frac{1}{w_u})\sum_m \theta_m w_m + \frac{1}{w_l}+6]$ and $k = \frac{\sum_{m}\theta_m w_m}{w_u}+(1+\frac{w_l}{4w_ub})\zeta - \sum_m \theta_m.$
\end{itemize}
\end{proof}

\section{Lemma \ref{SSC:out} and the Proof}

\begin{restatable}{lemma}{sscout}\label{SSC:out}
For a large $N$ such that , we have
$$\mathbb P\left( S \notin \mathcal S_{ssp}\right) \leq \frac{2}{N^{2}}.$$  \hfill{$\square$}
\end{restatable}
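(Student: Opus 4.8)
The plan is to split the complement $\{S\notin\mathcal S_{ssp}\}$ using the structure $\mathcal S_{ssp}=\mathcal S_{ssp_1}\cup\mathcal S_{ssp_2}$: if $S\notin\mathcal S_{ssp}$ then $S\notin\mathcal S_{ssp_1}$ and $S\notin\mathcal S_{ssp_2}$ simultaneously. Now $S\notin\mathcal S_{ssp_1}$ means either $S_{1,m}<s_{1,m}^*-\theta_m\Delta$ for some $m$, or $S_1<\lambda+(k-\zeta-6)\Delta$; the first possibility has probability at most $M/N^3$ by the lower-bound part of Theorem~\ref{Thm:equilibrium} (established there through Lemmas~\ref{lem: S11 lower bound} and \ref{lem: iter prob lower bound}). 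Off that rare event, $S\notin\mathcal S_{ssp}$ forces $S_1<\lambda+(k-\zeta-6)\Delta$ together with $\sum_{i=1}^b S_i>\lambda+k\Delta$ (the negation of $S\in\mathcal S_{ssp_2}$), and subtracting gives $\sum_{i=2}^b S_i>(\zeta+6)\Delta$. Hence it suffices to prove $\mathbb P\big(\sum_{i=2}^b S_i>(\zeta+6)\Delta\big)\le 1/N^{2}$ (say) and add the $M/N^{3}$ term.

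To bound $\mathbb P\big(\sum_{i=2}^b S_i>(\zeta+6)\Delta\big)$ I would apply the tail bound of Lemma~\ref{tail-bound-cond} to $V(s)=\sum_{i=2}^b s_i$. Tracking the transitions of Section~\ref{sec: sys dynamics}, $\nabla V(s)=\lambda\big(A_1(s)-A_b(s)\big)-\sum_{m}(1-p_m)\mu_m s_{2,m}$: only an arrival routed to a busy (non-full) server raises $V$, at normalized rate $\lambda A_1(s)$, while a completion at a one-job server lowers it. Since $s_{2,m}\ge s_{i,m}$ for $i\ge 2$, whenever $V(s)\ge B$ we have $\sum_m s_{2,m}\ge V(s)/(b-1)\ge B/(b-1)$, so the downward term is at least $w_l B/(b-1)$. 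On the good set $\mathcal E=\{s:s_1\le 1-\frac{1}{N^\alpha\log N}\}$ the LB-zero property gives $A_1(s)\le 1/\sqrt N$, so on $\mathcal E$ with $V(s)\ge B$ the drift is at most $\frac{1}{\sqrt N}-\frac{w_l B}{b-1}\le-\frac{w_l B}{2(b-1)}$ once $B$ is a suitable constant multiple of $(\zeta+6)\Delta$ and $N$ is in the stated range (this is where $\log N\ge 4b/(w_l\zeta)$ enters), while off $\mathcal E$ the drift is trivially $\le\lambda$. With $\nu_{\max}=1/N$, $q_{\max}\le\lambda N$, and $j$ chosen of order $\sqrt N\log N$, Lemma~\ref{tail-bound-cond} yields $\mathbb P\big(\sum_{i=2}^b S_i>(\zeta+6)\Delta\big)\le e^{-c_1\log^2 N}+\beta\,\mathbb P(S\notin\mathcal E)$, where $\beta=O(\sqrt N/\log N)$ and $c_1>0$.

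The remaining — and hardest — ingredient is an upper bound on $\mathbb P(S\notin\mathcal E)=\mathbb P\big(S_1>1-\frac{1}{N^\alpha\log N}\big)$ small enough to absorb the prefactor $\beta$. I would obtain it by a second application of Lemma~\ref{tail-bound-cond}, now to $V'(s)=s_1$, whose drift is $\nabla V'(s)=\lambda\big(1-A_1(s)\big)-D_1+\sum_m(1-p_m)\mu_m s_{2,m}$ with $D_1=\sum_m(1-p_m)\mu_m s_{1,m}$ the normalized service rate. By the same linear-programming/rearrangement argument as in Lemma~\ref{SSC:s1 s2 negative}, on the event $\{S_{1,m}\ge s_{1,m}^*-\theta_m\Delta\ \forall m\}$ one gets $D_1\ge\lambda+w_l(s_1-\lambda)-(w_u-w_l)\big(\sum_m\theta_m\big)\Delta$, so if in addition $\sum_{i\ge2}s_i$ is small then $\nabla V'(s)\le-w_l(s_1-\lambda)+O(\Delta)$, which is strongly negative as soon as $s_1-\lambda\gtrsim\Delta$; since $N^{-\alpha}\gg\Delta$ in the sub-Halfin-Whitt regime this negativity kicks in well below the level $1-\frac{1}{N^\alpha\log N}$, and Lemma~\ref{tail-bound-cond} then gives $\mathbb P\big(S_1>1-\frac{1}{N^\alpha\log N}\big)\le e^{-c_2 N^{1-2\alpha}}+\beta'\big(M/N^3+\mathbb P(\sum_{i\ge2}S_i>(\zeta+6)\Delta)\big)$ with $\beta'=O(N^\alpha)$. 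Thus the two tail bounds depend on each other, which is precisely the iterative structure of ISSP: one starts from the crude constant-threshold bound $\mathbb P(\sum_{i\ge2}S_i>B_0)$ with $B_0=\Theta(1)$ — which needs no exception set at all (the downward drift already dominates $\lambda A_1(s)\le\lambda$ once $\sum_m s_{2,m}\gtrsim 1$) and is therefore already super-polynomially small — and then alternately shrinks the threshold on $\sum_{i\ge2}s_i$ (improving the $s_1$ bound) and tightens the $s_1$ bound (improving the $\sum_{i\ge2}s_i$ bound), $O(\log N)$ rounds, until the threshold reaches $(\zeta+6)\Delta$. The delicate bookkeeping — and where I expect the real work to be — is to choose the geometric shrinking schedule and the increments $j$ at each round so that the super-exponentially small fresh tail terms ($e^{-\Theta(\log^2 N)}$ at the finest scale, even smaller at coarser scales) dominate the accumulated $N^{\Theta(\log N)}$ product of the prefactors $\beta,\beta'$, exactly as $\epsilon_1(0)=e^{-\bar v^2\log^2 N/C^2}$ beats $(M^2+2)^{n}\big(\tfrac{2C}{\bar v\Delta}+1\big)^{2Mn}$ with $n=\Theta(\log N)$ in the proof of Lemma~\ref{lem: S11 lower bound}; the constraints on $N$ in the statement of Theorem~\ref{Thm:main}, together with the specific value of $C$, are what keep this margin positive and leave a final bound below $2/N^2$.
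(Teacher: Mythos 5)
Your initial decomposition is correct: off the $M/N^3$-rare event from Theorem~\ref{Thm:equilibrium}, the complement of $\mathcal S_{ssp}$ does force both $S_1<\lambda+(k-\zeta-6)\Delta$ and $\sum_{i=1}^bS_i>\lambda+k\Delta$, hence $\sum_{i=2}^bS_i>(\zeta+6)\Delta$. You also correctly identify that bounding $\mathbb P(\sum_{i\geq2}S_i>(\zeta+6)\Delta)$ is the crux. But from here you diverge from the paper in a way that creates a real gap.

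The paper (Lemma~\ref{SSC:s1 s2}) runs a \emph{single} application of Lemma~\ref{tail-bound-cond} with the coupled Lyapunov function $V(s)=\min\{\lambda+k\Delta-s_1,\ \sum_{i=2}^b s_i\}$ and exception set $\mathcal E=\{s_{1,m}\geq s_{1,m}^*-\theta_m\Delta\ \forall m\}$. The virtue of the minimum is that when $V(s)\geq\zeta\Delta$ \emph{both} properties hold simultaneously: $s_1\leq\lambda+(k-\zeta)\Delta\leq 1-\frac{1-\xi}{2\mu_1N^\alpha}$ (so $A_1(s)\leq 1/\sqrt N$ by LB-zero) \emph{and} $\sum_{i\geq2}s_i\geq\zeta\Delta$ (so the downward term $\sum_m w_m s_{2,m}$ is at least $w_l\zeta\Delta/b$). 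The exception set is controlled directly by Theorem~\ref{Thm:equilibrium}, with no circularity. You instead split into two Lyapunov functions $V(s)=\sum_{i\geq2}s_i$ and $V'(s)=s_1$, use $\mathcal E=\{s_1\leq 1-\frac1{N^\alpha\log N}\}$ for the first, and then try to bound $\mathbb P(S\notin\mathcal E)$ by the second. This decoupling is precisely what creates the circular dependency you then try to resolve by an $O(\log N)$-round iterative shrinking scheme. That scheme is unnecessary given the paper's coupled $V$, and, more to the point, you do not carry it out; the prefactor bookkeeping across rounds (products of $\beta,\beta'=\Theta(\sqrt N/\log N),\Theta(N^\alpha)$ against the fresh tail terms) is asserted, not proved.

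There is also a concrete false step in the claimed base case. You state that $\mathbb P(\sum_{i\geq2}S_i>B_0)$ with $B_0=\Theta(1)$ ``needs no exception set at all'' because the downward drift dominates $\lambda A_1(s)\leq\lambda$ once $\sum_m s_{2,m}\gtrsim 1$. But the downward rate is $\sum_m(1-p_m)\mu_m s_{2,m}$, which is only bounded below by $w_l s_2\leq w_l$; the paper does not assume $w_l>\lambda\approx 1$, so the unconditional drift need not be negative. This is exactly why the paper's drift argument relies on the LB-zero property $A_1(s)\leq 1/\sqrt N$, which in turn requires $s_1$ to be small, which is what the minimum in $V(s)$ buys you for free in the region $V(s)\geq B$. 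Without some form of that coupling the iteration has no sound starting point. To repair your proof, replace the two separate Lyapunov functions with the paper's $\min$-Lyapunov function and use $\mathcal E=\{s_{1,m}\geq s_{1,m}^*-\theta_m\Delta\}$; the rest of your outline then matches the paper's two-case drift computation and a single invocation of Lemma~\ref{tail-bound-cond}.
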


\begin{proof}
The proof of Lemma \ref{SSC:out} again relies on iterative state space peeling, which is based on Theorem \ThmmainA~ and Lemma \ref{SSC:s1 s2} below.

\begin{lemma}[A Lower Bound on $S_1$ via $\sum_{i=2}^b S_i$]\label{SSC:s1 s2}
\begin{align*}
&\mathbb P\left(\min\left\{\lambda +k\Delta-S_1,\sum_{i=2}^{b} S_i\right\} \leq (\zeta+6)\Delta\right) \geq 1-\frac{1}{N^2},\end{align*} 
where $\zeta = \frac{4w_ub}{w_l}[(\frac{1}{w_l}-\frac{1}{w_u})\sum_m w_m \theta_m + \frac{1}{w_l}+6]$ and $k = \frac{\sum_{m}w_m\theta_m}{w_u}+(1+\frac{w_l}{4bw_u})\zeta - \sum_m \theta_m.$  \qed
\end{lemma}

Based on Theorem \ref{Thm:equilibrium} and Lemma \ref{SSC:s1 s2}, we define sets $\tilde{\mathcal S}_1$ and $\tilde{\mathcal S}_2$ such that 
\begin{align}
\tilde{\mathcal S}_1=&\left\{s~|~s_{1,m} \geq s_{1,m}^* - \theta_m\Delta\right\}\\
\tilde{\mathcal S}_2=&\left\{s~| \min\left\{\eta-s_1,\sum_{i=2}^{b} s_i\right\} \leq (\zeta+6)\Delta\right\}.
\end{align}
According to the union bound and Theorem \ref{Thm:equilibrium} and Lemma \ref{SSC:s1 s2}, we have 
\begin{align*}
\mathbb P \left( S \notin \tilde{\mathcal S}_1\cap \tilde{\mathcal S}_2\right)\leq \frac{M}{N^{3}} + \frac{1}{N^{2}}\leq \frac{2}{N^{2}}. 
\end{align*}
We note that  $\tilde{\mathcal S}_1\cap \tilde{\mathcal S}_2 $  is a subset of  ${\mathcal S}_{ssp}.$ This is because for any $s$ which satisfies 
$$\min\left\{\lambda+k\Delta-s_1,\sum_{i=2}^{b} s_i\right\} \leq (\zeta+6)\Delta,$$ we either have $$\lambda+k\Delta-s_1\leq (\zeta+6)\Delta,$$ which implies 
$$s_1\geq \lambda+\left(k-\zeta-6\right)\Delta;$$ or 
$$\sum_{i=2}^{b} s_i\leq \eta-s_1,$$ which implies 
$$\sum_{i=1}^{b} s_i\leq \eta.$$ Note that $$\tilde{\mathcal S}_1\cap\left\{s\left|~s_1\geq \lambda+\left(k-\zeta-6\right)\Delta\right.\right\}={\mathcal S}_{ssp_1}$$ and 
$$\tilde{\mathcal S}_1\cap\left\{s~\left|~\sum_{i=1}^{b} s_i\leq \eta\right.\right\}\subseteq {\mathcal S}_{ssp_2}.$$ We, therefore, have  $$\tilde{\mathcal S}_1\cap \tilde{\mathcal S}_2\subseteq {\mathcal S}_{ssp},$$ and \begin{align*}
\mathbb P\left( S \notin {\mathcal S}_{ssp}\right) \leq  \mathbb P\left( S \notin \tilde{\mathcal S}_1\cap \tilde{\mathcal S}_2\right)
\leq \frac{2}{N^{2}},
\end{align*} so Lemma \ref{SSC:out} holds.
\end{proof}

Next, we prove Lemma \ref{SSC:s1 s2}.
\subsection{Proof of Lemma \ref{SSC:s1 s2}}
Recall $w_u = \max_{1\leq m \leq M} (1-p_m)\mu_m,$ $w_l = \min_{1\leq m \leq M} (1-p_m)\mu_m,$ and $L_{m} = s_{1,m}^* - \theta_m \Delta.$ Let $\zeta = \frac{4w_ub}{w_l}\left((\frac{1}{w_l}-\frac{1}{w_u})\sum_m w_m \theta_m + \frac{1}{w_l}+6\right)$ and $k = \frac{\sum_{m}w_m\theta_m}{w_u}+(1+\frac{w_l}{4bw_u})\zeta - \sum_m \theta_m.$ 

Consider Lyapunov function 
\begin{align}
V(s) = \min\left\{\lambda +k\Delta-s_1, \sum_{i=2}^b s_i\right\} \label{Ly:SSC}
\end{align} and define  
$$\mathcal E = \left\{s ~|~ s_{1,m} \geq L_{m}, \forall 1\leq m \leq M\right\}.$$ 

When $V(s) \geq \zeta\Delta,$ the following two inequalities hold
\begin{align}
             &s_1 \leq \lambda + (k-\zeta)\Delta \leq 1 - \frac{1-\xi}{2\mu_1 N^\alpha}, \label{SSCp=0:s1}\\
&\sum_{i=2}^b s_i \geq \zeta\Delta. \label{SSCp=0:s2}
\end{align}
We have two observations based on \eqref{SSCp=0:s1} and \eqref{SSCp=0:s2}:
\begin{itemize}
\item \eqref{SSCp=0:s1} implies that $A_1(s) \leq \frac{1}{\sqrt{N}}$ under any policy in $\Pi$;  
\item \eqref{SSCp=0:s2} implies that $s_2 \geq \frac{\zeta\Delta}{b}$ because  $s_2 \geq s_3 \geq \cdots \geq s_b,$ and we have
\end{itemize}
\begin{align}
\sum_{m=1}^{M}(1-p_m)\mu_m s_{2,m} \geq w_l s_2 \geq \frac{w_lc_1\Delta}{b}, \label{p=0s21-s22-lowerbound}
\end{align}
where a finite buffer size is required such that the lower bound $w_l s_2 \geq \frac{w_lc_1\Delta}{b}$ is meaningful.

We next study the Lyapunov drift when $V(s) \geq \zeta\Delta$ and $s\in \mathcal E$ by considering two cases:
\begin{itemize}
\item Suppose $\lambda +k\Delta-s_1 \geq \sum_{i=2}^b s_i \geq \zeta\Delta.$ In this case, $V(s) = \sum_{i=2}^b s_i$ and
\begin{align}
\nabla V(s) \leq& \lambda (A_1(s)-A_b(s))- \sum_{m=1}^M(1- p_m)\mu_m s_{2,m}\label{SSCp=0-case1-1}\\
\stackrel{(a)}{\leq}& \frac{1}{\sqrt{N}}- \sum_{m=1}^M(1- p_m)\mu_m s_{2,m}\nonumber\\
\stackrel{(b)}{\leq}&  \frac{1}{\sqrt{N}}-\frac{w_lc_1\Delta}{b} \nonumber\\
\stackrel{(c)}{\leq}&  -\frac{w_lc_1\Delta}{2b} \nonumber
\end{align}  where
\begin{itemize}
\item $(a)$ holds because $A_1(s) \leq \frac{1}{\sqrt{N}}$ under any policy in $\Pi;$
\item $(b)$ holds because of \eqref{p=0s21-s22-lowerbound};
\item and $(c)$ holds because $\log N \geq \frac{4b}{w_lc_1}.$
\end{itemize}

\item Suppose $\sum_{i=2}^b s_i > \lambda +k\Delta-s_1 \geq \zeta\Delta.$ In this case, $V(s) = \lambda +k\Delta-s_1$ and
\begin{align}
\nabla V(s) 
\leq& -\lambda (1-A_1(s)) + \sum_{m=1}^M(1- p_m)\mu_m s_{1,m} - \sum_{m=1}^M(1- p_m)\mu_m s_{2,m}\label{SSCp=0-case2-1}\\ 
\leq& \frac{1}{\sqrt{N}}-\lambda + w_u s_1 - \sum_{m=1}^M\left(w_u-w_m\right) s_{1,m} - \sum_{m=1}^Mw_m s_{2,m}\nonumber\\ 
\stackrel{(a)}{\leq}& \frac{1}{\sqrt{N}} -\lambda + w_u \left(s_1-\sum_{m=1}^M L_{m}\right) +  \sum_{m=1}^Mw_m L_{m}-  \sum_{m=1}^Mw_m s_{2,m}\nonumber\\
\stackrel{(b)}{=}& \frac{1}{\sqrt{N}} + \left(w_u \left(k-\zeta + \sum_{m=1}^M \theta_m\right) -  \sum_{m=1}^Mw_m\theta_m \right)\Delta-  \sum_{m=1}^Mw_m\mu_m s_{2,m}\nonumber\\
\stackrel{(c)}{\leq}& \frac{1}{\sqrt{N}} + \left(w_u \left(k-\zeta + \sum_{m=1}^M \theta_m\right) -  \sum_{m=1}^Mw_m\theta_m \right)\Delta -\frac{w_lc_1\Delta}{b}\nonumber\\
{=}& \frac{1}{\sqrt{N}} -\frac{3w_lc_1\Delta}{4b} \\
\stackrel{(d)}\leq& -\frac{w_lc_1\Delta}{2b} \nonumber
\end{align} where
\begin{itemize}
\item $(a)$ holds because $s_{1,m} \geq L_m, \forall m \geq 1;$ 
\item $(b)$ holds because $s_1 \leq \lambda + (k-\zeta)\Delta$ and $L_m = s_{1,m}^* - \theta_m\Delta, \forall m \geq 1;$
\item $(c)$ holds because $w_u (k-\zeta + \sum_{m=1}^M \theta_m) - \sum_{m=1}^Mw_m\theta_m =-\frac{w_lc_1}{4b}$ given $k$ and $\zeta;$
\item and $(d)$ holds because $\log N \geq \frac{4b}{w_lc_1}.$
\end{itemize}
\end{itemize}

Next, we further show $\nabla V(s) \leq w_{u}$ based on the upper bounds \eqref{SSCp=0-case1-1} and \eqref{SSCp=0-case2-1}.
\begin{itemize}
\item Consider the upper bound in \eqref{SSCp=0-case1-1}. We have
\begin{align}
\nabla V(s) \leq \lambda (A_1(s)-A_b(s)) - \sum_{m=1}^M(1-p_m)\mu_m s_{2,m}\leq 1 \leq w_{u}, \nonumber
\end{align}
where $1 \leq w_{u}$ holds because $\sum_{m=1}^Mv_m=1.$
\item Consider the upper bound in \eqref{SSCp=0-case2-1}. We have
\begin{align*}
\nabla V(s) \leq& -\lambda (1-A_1(s)) + \sum_{m=1}^M(1-p_m)\mu_m s_{1,m} -\sum_{m=1}^M(1-p_m)\mu_m s_{2,m} \\
\leq& \sum_{m=1}^M(1-p_m)\mu_m s_{1,m} \leq w_{u}, 
\end{align*}
where the last inequality holds because $\sum_m s_{1,m} = s_1 \leq 1.$
\end{itemize}

We now apply Lemma \ref{tail-bound-cond}. Define  $B = \zeta\Delta,$ $\gamma = \frac{w_l \zeta\Delta}{2b}$ and $\delta = w_u.$ Combining $q_{\max} =w_{u}N$ and $\nu_{\max} = \frac{1}{N},$ we have $$\alpha = \frac{w_u}{w_u + \frac{w_l \zeta\Delta}{2b}} ~~\text{ and }~~ \beta = \frac{2w_ub}{w_l \zeta\Delta} + 1.$$ 
Choosing $j=3\sqrt{N}\log N,$ we have
\begin{align*}
\mathbb P\left(V(S) \geq B + 2 \nu_{\max} j\right) 
\stackrel{(a)}{=}& \mathbb P\left(V(S) \geq (\zeta+6)\Delta\right)\\
\stackrel{(b)}{\leq}& \left(\frac{1}{1 + \frac{w_l \zeta\Delta}{2w_ub}}\right)^{3\sqrt{N}\log N} + \beta \mathbb P\left(S \notin \mathcal E\right)\\
{\leq}& \left(1 - \frac{w_lc_1\Delta}{3w_ub}\right)^{3\sqrt{N}\log N} + \beta \mathbb P\left(S \notin \mathcal E\right)\\
\stackrel{(d)}{\leq}& e^{-\frac{w_lc_1}{w_ub}\log^2 N} + \left(\frac{2w_ub}{w_l \zeta\Delta} + 1\right) \mathbb P\left(S \notin \mathcal E\right)\\
\stackrel{(d)}{\leq}& e^{-\frac{w_lc_1}{w_ub}\log^2 N} + \left(\frac{2w_ub}{w_l \zeta\Delta} + 1\right)  \frac{M}{N^3}\\
\stackrel{(e)}{\leq}& \frac{1}{N^2}
\end{align*}  where
\begin{itemize}
\item $(a)$ holds by substituting $B = \zeta\Delta,$ $\nu_{\max}=\frac{1}{N}$ and $j=3\sqrt{N}\log N;$
\item $(b)$ holds based on Lemma \ref{tail-bound-cond};
\item $(c)$ holds because $\frac{w_lc_1}{w_ub}\leq \frac{1}{\Delta};$
\item $(d)$ holds by union bounds on $\mathbb P\left(S \notin \mathcal E\right);$
\item and $(e)$ holds because $\frac{w_lc_1}{w_ub}\geq 24$ and $2\left(\frac{2Mbw_u}{w_l \zeta\Delta} + M\right) \leq N.$
\end{itemize}

\end{document}